\setlist[enumerate]{label=(\roman*)}
\renewcommand\d{\mathrm{d}}
\newcommand\N{\mathbb{N}}
\newcommand\R{\mathbb{R}}
\DeclareMathAlphabet{\mathpzc}{OT1}{pzc}{m}{it}
\renewcommand\AA{\mathscr{A}}
\newcommand\BB{\mathscr{B}}
\newcommand\PP{\mathcal{P}}
\newcommand{\weakly}{\rightharpoonup}
\newcommand{\weaklystar}{\stackrel\star\rightharpoonup}
\DeclareMathOperator*{\essinf}{ess\,inf}
\newcommand{\sub}[1]{\underline{#1}}
\newcommand{\super}[1]{\overline{#1}}
\DeclarePairedDelimiter\paren()
\DeclarePairedDelimiter\abs{\lvert}{\rvert}
\DeclarePairedDelimiter\norm{\lVert}{\rVert}
\DeclarePairedDelimiterX\innerp[2]{(}{)}{#1,#2}
\DeclarePairedDelimiterX\dual[2]{\langle}{\rangle}{#1,#2}
\providecommand\given{\nonscript\;\delimsize|\nonscript\;}
\DeclarePairedDelimiterX\set[1]{\{}{\}}{#1}
\crefname{assumption}{Assumption}{Assumptions}
\title{Lipschitz Stability and Hadamard Directional Differentiability for Elliptic and Parabolic
	Obstacle-Type Quasi-Variational Inequalities\thanks{\vspace{-\baselineskip}
\funding{This research was supported by the German Research Foundation (DFG) under grant number WA 3636/4-1
within the priority program ``Non-smooth and Complementarity-based Distributed Parameter
Systems: Simulation and Hierarchical Optimization'' (SPP 1962). The first author gratefully acknowledges 
the support by the International Research Training Group IGDK 1754 
funded by DFG and the Austrian Science Fund (FWF)
under project number 188264188/GRK1754.%
}}}
\author{Constantin Christof\thanks{%
Technische Universit\"at M\"unchen,
Faculty of Mathematics, M17,
85748 Garching bei M\"unchen, Germany,
\url{https://www-m17.ma.tum.de/Lehrstuhl/ConstantinChristof},
\email{christof@ma.tum.de}%
}%
\and
Gerd Wachsmuth\thanks{%
Brandenburgische Technische Universit\"at Cottbus-Senftenberg, 
Institute of Mathematics, 
03046 Cott\-bus, Germany, 
\url{https://www.b-tu.de/fg-optimale-steuerung},
\email{gerd.wachsmuth@b-tu.de}%
}%
}
\begin{document}

\maketitle

\begin{abstract}
This paper is concerned with the sensitivity analysis of a class of 
parameterized fixed-point problems that arise in the context of obstacle-type quasi-variational inequalities. We 
prove that, if the operators in the considered fixed-point equation satisfy a positive 
superhomogeneity condition, then the maximal and minimal element
of the solution set of the problem depend locally Lipschitz continuously on the involved parameters. 
We further show that, if certain concavity conditions hold, then the maximal solution mapping
is Hadamard directionally differentiable and its directional derivatives are precisely the minimal solutions of suitably defined 
linearized fixed-point equations. In contrast to prior results,
our analysis requires neither a Dirichlet space structure,
nor restrictive assumptions on the mapping behavior and regularity of the involved operators,
nor sign conditions on the directions that are considered in the 
directional derivatives. Our approach further covers the elliptic and parabolic 
setting simultaneously and also yields Hadamard directional differentiability results
in situations in which the solution set of the fixed-point equation is a continuum
and a characterization of 
directional derivatives via linearized auxiliary problems is provably impossible. 
To illustrate that our results can be used to study interesting problems arising in practice,
we apply them to establish the Hadamard directional differentiability of the solution 
operator of a nonlinear elliptic quasi-variational inequality, which emerges 
in impulse control and in which the obstacle mapping is obtained by taking 
essential infima over certain parts of the underlying domain,
and of the solution mapping of a parabolic quasi-variational inequality, which involves boundary controls and 
in which the state-to-obstacle relationship is described by a partial differential equation.
\end{abstract}

\begin{keywords}
sensitivity analysis, 
fixed-point equation, 
quasi-variational inequality,
Lipschitz stability,
Hadamard directional differentiability, 
optimal control, order approach, 
impulse control 
\end{keywords}

\begin{AMS}
35J87, 35K86, 47J20, 49J40, 49K40, 90C31
\end{AMS}


\section{Introduction and summary of results}
\label{sec:1}
The aim of this paper is to study parameterized fixed-point problems of the form
\begin{equation} 
\tag{\textup{F}}
y \in L^2(X),\quad y = S(\Phi(y), u).
\end{equation}
Here, $L^2(X)$ denotes the standard $L^2$-space on a complete measure space $(X, \Sigma, \mu)$
endowed with the partial order induced by the $\mu$-a.e.-sense;
$S\colon \bar P \times U \to L^2_+(X)$ is a function that maps elements of 
a partially ordered set $\bar P$, which possesses a largest element $\bar p$,
and a set $U$ into $L^2_+(X)$ and is nondecreasing in its first argument; 
and $\Phi\colon L^2(X) \to \bar P$ is a nondecreasing map with values in $P := \bar P \setminus \{\bar p\}$.
For the precise assumptions on the quantities in \eqref{eq:FP}, 
we refer the reader to \cref{subsec:2.1}.

Our prime interest 
is in the derivation of conditions that ensure the local Lipschitz continuity and/or directional differentiability of
certain selections from the (in general set-valued) solution mapping 
$\mathbb{S}\colon U \rightrightarrows L^2(X)$, $u \mapsto \{y \mid y = S(\Phi(y), u) \}$, associated 
with \eqref{eq:FP}. 
The main application that we have in mind is that the 
function $S$ is the solution operator of an 
elliptic or parabolic obstacle-type variational inequality,
i.e., the function that maps an obstacle $p \in \bar P$
and a right-hand side $u \in U$ to the 
solution of a variational inequality of the first kind that involves a unilateral constraint set
of the form $\{v \leq p\}$, cf.\ the examples in \cref{sec:6}.
In this situation, the fixed-point problem \eqref{eq:FP} is 
equivalent to a so-called obstacle-type quasi-variational inequality (QVI)
in which the bound defining the admissible set depends implicitly on the problem solution. 
Variational inequalities with such a structure arise, for instance, 
in the areas of mechanics, superconductivity, and 
thermoforming, see \cite{Alphonse2019-1,Alphonse2019-2,Aubin1979,Prigozhin1996-2,Prigozhin1996-1} and the references therein. 
As a prototypical example, we mention the following
elliptic quasi-variational inequality that emerges in impulse control and 
that was one of the first QVIs to be formulated when this 
problem class was introduced by Lions and Bensoussan in the nineteen-seventies,
cf.\ \cite[section VIII-2]{Bensoussan1982}: 
Given an open bounded nonempty set $\Omega \subset \R^d$, $d \in \mathbb{N}$,
a constant $\kappa \geq 0$, a function $c_0 \in L^0_+(\R^d)$,
a nondecreasing, globally Lipschitz continuous, convex function $f\colon \R \to \R$ with $f(0) = 0$
(acting as a superposition operator),
and a $u \in H^{-1}_+(\Omega) := 
\{z \in H^{-1}(\Omega) \mid \left \langle z, v\right \rangle \geq 0$ for all $0 \leq v \in H_0^1(\Omega)\}$, 
find a function $0 \leq y \in H_0^1(\Omega)$ satisfying 
\begin{equation}
\label{eq:ImpVIintro}
y \leq \Theta(y) \qquad \text{and}\qquad 
\left \langle -\Delta y + f(y) - u, v - y \right \rangle \geq 0~~\forall v \in H_0^1(\Omega), v \leq \Theta(y),
\end{equation}
where the obstacle $\Theta(y)$ is defined by 
\begin{equation}
\label{eq:thetaDefintro}
\Theta(y)(x) := \kappa + \essinf_{0 \leq \xi \in \R^d,~x + \xi \in \Omega} c_0(\xi) + y(x + \xi) \quad \text{ for a.a.\ } x \in \Omega. 
\end{equation}
Here, $\Delta\colon H_0^1(\Omega) \to H^{-1}(\Omega)$ denotes the distributional Laplacian, 
$H_0^1(\Omega)$ and $H^{-1}(\Omega)$ are defined as usual, and
$\left \langle \cdot, \cdot \right \rangle$ denotes the dual pairing. For more details on 
the above problem, its background, and its reformulation as a fixed-point equation of the form \eqref{eq:FP},
we refer the reader to \cite{Bensoussan1975,Bensoussan1975-2,Lions1986,Perthame1984,Perthame1985} and \cref{subsec:6.2}.

Due to the various processes in physics and economics that can be described by QVIs,
there has been an increasing interest in the optimal control of this class of variational inequalities,
cf.\ \cite{Adly2010,Alphonse2020-1,Dietrich2001,Wachsmuth2020} and the references therein. 
Studying optimization problems with QVI-constraints, however, turns out to be a challenging task.  
Because of the set-valuedness of the solution mapping of \eqref{eq:FP}, the formulation 
of reasonable optimal control problems for such a fixed-point equation  
typically requires working with certain distinguished
selections from the solution set (e.g., minimal and maximal elements, cf.\ \cref{th:solvability}), 
and because of the implicit and often highly nontrivial and nonsmooth dependence of the constraint set 
on the problem solution (cf.\ the function $\Theta$ in \eqref{eq:thetaDefintro}),
the derivation of necessary optimality conditions is far from straightforward for QVIs even 
in those situations where the solution set $\mathbb{S}(u)$ can be proved to be a singleton. 
Despite these difficulties, there have been several contributions in the recent years that have tried to establish 
stability and directional differentiability results for obstacle-type quasi-variational inequalities
and, by doing so, to lay the foundation for the study of optimal control problems 
governed by QVIs. 
We mention exemplarily \cite{Alphonse2019-1,Alphonse2020-2,Alphonse2020-4}, 
which establish the directional differentiability of the solution maps of 
elliptic and parabolic obstacle-type quasi-variational inequalities 
in signed (i.e., nonnegative or nonpositive) directions by means of an 
approximation argument and classical results on ordinary variational inequalities; 
\cite{Alphonse2020-1}, which proves the continuity of the minimal
and maximal solution mappings of elliptic obstacle-type QVIs in the $L^2$-spaces; 
and \cite{Wachsmuth2020}, which establishes the directional differentiability of the solution operators 
of elliptic QVIs in all directions under a smallness assumption on the obstacle mapping and by means of the results of \cite{ChristofWachsmuth2020}. 
Unfortunately, all of the above papers have in common that they require 
very restrictive and, at times, even unrealistic assumptions on the involved operators and quantities. 
See, e.g., the conditions on the sign of the directions appearing in the derivatives in 
\cite[Theorem~1]{Alphonse2019-1}, \cite[Assumption 28]{Alphonse2020-2}, and \cite[Theorems 3.6, 4.4]{Alphonse2020-4};
the assumptions on the image, the complete continuity, and the size of $\Phi$ and its derivatives in 
\cite[Assumptions (A2), (A3), (A5)]{Alphonse2019-1},
\cite[Assumptions 28, 32, 34]{Alphonse2020-2},
\cite[Theorems 3.6, 4.4]{Alphonse2020-4},
\cite[Assumption~1]{Alphonse2020-1},
and
\cite[Assumption 3.1]{Wachsmuth2020};
and the comments in \cite[Remark~2]{Adly2010}, which emphasize that compactness assumptions 
on the obstacle map are a main bottleneck in the study of obstacle-type QVIs. 
We remark that all of these conditions in particular prevent the 
differentiability results of \cite{Alphonse2019-1,Alphonse2020-4,Wachsmuth2020} 
from being applicable to the problem \eqref{eq:ImpVIintro}.  

The aim of this paper is to demonstrate that, 
if the operators $S$ and $\Phi$ possess certain 
pointwise curvature properties, which are present in many situations,
then it is possible to establish very strong Lipschitz stability and directional differentiability results 
for the solution map of a fixed-point equation of the type \eqref{eq:FP} that do not suffer from the above problems. 
Our main results are as follows. 
(See the references in brackets for the precise assumptions and statements.)\vspace{0.4em}

\begin{itemize}[itemsep=0.4em,leftmargin=0.635cm]
\item {\bf (Local Lipschitz continuity)} We show that, if 
$P$ is a subset of a real vector space,
$U$ is a subset of $L^\infty_+(Y)$ for some complete measure space $(Y, \Xi, \eta)$,
$S$ is nondecreasing in both of its arguments, 
and $S$ and $\Phi$ satisfy 
a superhomogeneity condition,
then the minimal and maximal solution map 
of \eqref{eq:FP} are locally Lipschitz continuous on the set 
$\{ u \in U \mid u \geq c \text{ for some constant } c > 0\}$ 
as functions from $L^\infty(Y)$ into all $L^q(X)$-spaces 
that $S$ maps into. (See \cref{thm:solutions_lipschitz} and \eqref{eq:randomeq273545}.)

\item {\bf (Concavity of the maximal solution operator)}
We show that, if $P$ and $U$ are 
convex subsets of real vector spaces 
and $S\colon P \times U \to L^2_+(X)$ and $\Phi\colon L^2_+(X) \to P$ are concave functions,
then the maximal solution map 
of \eqref{eq:FP} is concave and pointwise $\mu$-a.e.\ directionally differentiable.
(See \cref{prop:Mconcave}, \cref{th:monononeM}.)

\item {\bf (Hadamard directional differentiability of the maximal solution map)}
For problems \eqref{eq:FP} that are covered by both our Lipschitz continuity and 
our concavity result, we prove that the maximal solution map
is Hadamard directionally differentiable on the set $\{ u \in U \mid u \geq c \text{ for some constant } c > 0\}$
as a function from $L^\infty(Y)$ into all $L^q(X)$-spaces
that $S$ maps into. (See \cref{cor:Hadamard}.)

\item {\bf (Unique characterization of directional derivatives)}
For problems \eqref{eq:FP} that satisfy a strengthened concavity assumption
(which also ensures that $\mathbb{S}(u)$ is a singleton), 
we establish that the derivatives of the 
solution map $\mathbb{S}\colon U \to L^2_+(X)$
are uniquely characterized by the condition that they are the smallest elements 
of the solution sets of certain linearized fixed-point equations. 
(See \cref{th:auxQVI}.)\vspace{0.4em}
\end{itemize}

Note that the differentiability results in \cref{th:monononeM,cor:Hadamard} 
indeed do not require any conditions on the signs 
of the directions in the derivatives, on the smallness of $\Phi$ (or its differentiability), 
or on the existence of an underlying Dirichlet space structure, cf.\ \cite{Alphonse2019-1,Alphonse2020-4,Wachsmuth2020}. 
As we will see in \cref{sec:6}, because of this,
our theorems are in particular able to cover 
the elliptic and the parabolic setting simultaneously
and to even yield Hadamard directional differentiability results 
in situations in which the solution set $\mathbb{S}(u)$ of \eqref{eq:FP} is a continuum
and a characterization of derivatives via linearized auxiliary problems is provably impossible.
We remark that this is in stark contrast 
to, e.g., \cite{Alphonse2019-1,Wachsmuth2020}, in which the used smallness assumptions
imply that solutions of \eqref{eq:FP} are locally unique,
and that \cref{thm:solutions_lipschitz,cor:Hadamard,th:auxQVI}
improve, for instance, 
\cite[Theorem 4]{Alphonse2020-1} and
\cite[Theorem 1]{Alphonse2019-1} under \cite[Condition (A2b)]{Alphonse2019-1}.
Our theorems also seem to be the first results that allow to establish
the Hadamard directional differentiability of the 
solution operators of parabolic obstacle-type QVIs in all directions and to uniquely characterize
the associated directional derivatives.
Additional comments on this topic
can be found in the subsequent sections after the respective theorems.
Lastly, we would like to mention that
our analysis allows to prove the local Lipschitz continuity and
Hadamard directional differentiability 
of the maximal solution map $M$ of the nonlinear elliptic impulse control problem \eqref{eq:ImpVIintro} 
(which is equal to $\mathbb{S}$ in the case $\kappa>0$)
on the set $\{u \in L^\infty(\Omega) \mid u \geq c \text{ for a constant } c > 0 \}$ 
as a function from $L^\infty(\Omega)$ into all $L^q(\Omega)$-spaces, see \cref{th:impulsLinfty,th:impulseH-1}. 
Again, this paper seems to be the first contribution to accomplish this. 

\subsection*{Content of the remaining sections}
We conclude this introduction with a brief overview of the structure of the remainder of the paper.

\Cref{sec:2} is concerned with preliminaries. 
Here, we clarify the notation, state our standing assumptions, and discuss the solvability of \eqref{eq:FP}
as well as the existence and properties of smallest and largest elements of its solution set. 

\Cref{sec:3} addresses the Lipschitz stability of the minimal and maximal solution map
of \eqref{eq:FP}. See \cref{thm:solutions_lipschitz} for the main result of this section. 

In \cref{sec:4}, we prove that the maximal solution operator of \eqref{eq:FP}
is indeed concave when the functions $S$ and $\Phi$ possess this property, 
see \cref{prop:Mconcave}. 
In \cref{th:monononeM,cor:Hadamard}, 
we then study the consequences that this observation 
has for the directional differentiability properties of the maximal solution map. 

\Cref{sec:5} establishes the already mentioned characterization result 
for directional derivatives by means of a linearized auxiliary problem, see \cref{th:auxQVI}.

\Cref{sec:6} contains three examples that illustrate that our Lipschitz continuity and directional differentiability results
can be used to study interesting problems and are also relevant for applications. 
In \cref{subsec:6.1}, we begin the discussion with a simple one-dimensional model quasi-variational inequality that 
demonstrates that the Hadamard directional differentiability result in \cref{cor:Hadamard}
covers cases in which the considered QVI possesses a continuum of solutions. 
This example in particular shows that the strengthened concavity assumption in \cref{th:auxQVI} is necessary 
and that, without it, 
the characterization of derivatives via linearized auxiliary problems may be impossible. 
The subsequent \cref{subsec:6.2} is concerned with the application of our theory to the QVI \eqref{eq:ImpVIintro}.
Here, we show that \cref{thm:solutions_lipschitz,cor:Hadamard} indeed allow to prove the 
Hadamard directional differentiability of the maximal solution map of this problem, see \cref{th:impulseH-1,th:impulsLinfty}.
In \cref{subsec:6.3}, we finally consider an evolution QVI with boundary controls in which the obstacle mapping 
arises from a parabolic partial differential equation. 
This example shows that all of the results 
in \cref{sec:2,sec:3,sec:4,sec:5} are also applicable to time-dependent problems.


\section{Preliminaries}
\label{sec:2}
This section is concerned with preliminaries. \Cref{subsec:2.1} clarifies the notation, rigorously formulates the considered problem, 
and collects our standing assumptions. \Cref{subsec:2.2} contains results on the solvability of \eqref{eq:FP}
and the properties and existence of minimal and maximal solutions.

\subsection{Notation, standing assumptions, and problem setting}
\label{subsec:2.1}
Throughout this paper, 
$(X, \Sigma, \mu)$ denotes a complete measure space. 
The $L^q$-spaces on $(X,\Sigma, \mu)$ are denoted by $L^q(X)$, $1 \leq q \leq \infty$, 
and the vector space of equivalence classes of real-valued measurable functions on $X$ by $L^0(X)$
(so that $L^q(X) \subset L^0(X)$ for all $q \in \{0\} \cup [1, \infty]$). 
We equip the spaces $L^q(X)$, $1 \leq q \leq \infty$, with the usual norms $\|\cdot \|_{L^q(X)}$
and endow  $L^q(X)$ for all $q \in \{0\} \cup [1, \infty]$ with the partial order induced by the $\mu$-a.e.-sense,
i.e., $v_1  \geq v_2 :\iff v_1 \geq v_2~\mu\text{-a.e.\ in }X$. 
For later use, we 
also introduce the abbreviations 
$\smash{L^{[r,s]}(X) := L^r(X)\cap L^s(X)}$ for all $1 \leq r \leq s \leq \infty$
(equipped with the norm $\|\cdot\|_{L^{[r,s]}(X)} :=  \|\cdot \|_{L^r(X)} + \|\cdot \|_{L^s(X)}$),
 $L^q_+(X) := \{v \in L^q(X) \mid v \geq 0\}$ for all 
$q \in \{0\} \cup [1, \infty]$, 
$L^\infty_\oplus(X) := \{v \in L^\infty(X) \mid v \geq c \text{ for a constant } c>0\}$,
and
$\smash{L_+^{[r,s]}(X):= L_+^r(X)\cap L_+^s(X)}$. 
Note that we have $\smash{L^{[r,s]}(X) = \bigcap_{q \in [r, s]}L^q(X)}$ by H{\"o}lder's inequality. 
With $L^0(X,(-\infty, \infty])$, we denote the
set of equivalence classes of extended real-valued measurable functions on $X$ 
with values in $(-\infty, \infty]$. 
Sometimes, we require a second complete measure space, which we denote by $(Y, \Xi, \eta)$.
We use the same notations and conventions for the spaces $L^q(Y)$ as for the spaces $L^q(X)$.

In all what follows, the symbol $\bar P$ denotes a set that is equipped with a 
partial order $\leq$ and possesses a largest element $\bar p \in \bar P$.
We always assume that $\bar P$ contains at least two elements 
so that the set $P := \bar P \setminus \{\bar p\}$ is nonempty. 
Recall that 
a partial order is a binary relation that is 
reflexive ($p \leq p$ for all $p \in \bar P$), 
antisymmetric (if $p_1 \leq p_2$ and $p_2 \leq p_1$, then $p_1 = p_2$ for all $p_1, p_2 \in \bar P$), and 
transitive (if $p_1 \leq p_2$ and $p_2 \leq p_3$, then $p_1 \leq p_3$ for all $p_1, p_2, p_3 \in \bar P$) 
and that a largest element $\bar p$ of a partially ordered set $\bar P$ is an element satisfying $\bar p \in \bar P$ and 
$p \leq \bar p$ for all $p \in \bar P$. Due to the antisymmetry of $\leq$, such an element is always unique.
In our applications, 
the element $\bar p$ can be understood as $\infty$
and $P = \bar P \setminus \{\bar p\}$ is the nonnegative cone in a real vector space.
Finally, with $U$ we denote a nonempty set (the set of parameters). 

As already mentioned in 
\cref{sec:1}, the main objective of this paper is to study parameterized fixed-point equations of the form 
\begin{equation} 
\label{eq:FP}
\tag{\textup{F}}
y \in L^2(X),\quad y = S(\Phi(y), u).
\end{equation}
Our standing assumptions on the operators $S$ and $\Phi$ in \eqref{eq:FP} are as follows:\vspace{0.4em}

\begin{itemize}[itemsep=0.4em,leftmargin=0.635cm]
\item $S\colon \bar P \times U \to L^2_+(X)$ and it holds  
$u \in U, p_1, p_2 \in \bar P, p_1 \leq p_2 \Rightarrow S(p_1, u) \leq S(p_2, u)$. 
\item $\Phi\colon L^2(X) \to P$ and it holds $v_1,v_2 \in L^2(X), v_1 \leq v_2 \Rightarrow \Phi(v_1) \leq \Phi(v_2)$.\vspace{0.4em}
\end{itemize}
Tangible examples of operators $S$ and $\Phi$ that arise in the context of obstacle-type QVIs and 
satisfy the above conditions can be found in \cref{sec:6}. 
For the sake of brevity, we will sometimes also work with the shorthand notation $T_u(v) := S(\Phi(v), u)$ in this paper. 
Note that, using the map $T_u\colon L^2(X) \to L^2_+(X)$, the problem 
\eqref{eq:FP} can be recast as $y = T_u(y)$ for all $u \in U$.  
We remark that
the above standing assumptions on $(X, \Sigma, \mu)$, $\bar P$, $P$, $U$, $S$, and $\Phi$
will be complemented with additional conditions in the subsequent sections, see
\cref{ass:Lipschitz,ass:DirDiff,ass:AuxProb}. In  the statements of our theorems,
we will always make precise if such assumptions are needed. 

\subsection{Existence of solutions via an order approach}
\label{subsec:2.2}
To establish that the problem \eqref{eq:FP} possesses a nonempty set of solutions under the standing assumptions
of \cref{subsec:2.1}, one can use a classical order approach based on the theorem of Birkhoff-Tartar, see 
\cite[section 15.2.2]{Aubin1979}. 
As we need several results on minimal and maximal solutions that are obtained from this method of proof for our sensitivity analysis, 
we present the arguments in detail in this subsection.

\begin{lemma}
\label{lemma:Tincreasing}%
The map $T_u\colon L^2(X) \to L^2_+(X)$ is nondecreasing, i.e., 
for all $u \in U$ and all $v_1, v_2 \in L^2(X)$ with $v_1 \leq v_2$,
we have $T_u(v_1) \leq T_u(v_2)$.
\end{lemma}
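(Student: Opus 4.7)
The plan is to observe that $T_u = S(\Phi(\cdot), u)$ is a composition of two monotone maps, so the monotonicity of $T_u$ follows by simply chaining the two standing assumptions listed at the end of \cref{subsec:2.1}. No additional structure (ordering properties of $U$, measurability, integrability, etc.) enters.

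Concretely, I would fix $u \in U$ and $v_1, v_2 \in L^2(X)$ with $v_1 \leq v_2$. The first step is to invoke the standing assumption that $\Phi\colon L^2(X) \to P$ is nondecreasing to conclude $\Phi(v_1) \leq \Phi(v_2)$ in $\bar P$. The second step is to use the standing assumption that $S(\cdot, u)$ is nondecreasing on $\bar P$ (applied to the comparable elements $\Phi(v_1), \Phi(v_2) \in \bar P$) to deduce $S(\Phi(v_1), u) \leq S(\Phi(v_2), u)$ in $L^2_+(X)$. Recalling the shorthand $T_u(v) := S(\Phi(v), u)$, this gives $T_u(v_1) \leq T_u(v_2)$, which is exactly the claim.

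There is no genuine obstacle here; the statement is essentially a definitional unpacking and is included only to make the composite monotonicity explicit for use in the Birkhoff-Tartar argument that follows. The only thing to be mindful of is that $\Phi$ takes values in $P = \bar P \setminus \{\bar p\} \subseteq \bar P$, so the monotonicity of $S$ in its first argument, which is stated on all of $\bar P$, applies without issue to the images $\Phi(v_1), \Phi(v_2)$.
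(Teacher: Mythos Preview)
Your proof is correct and essentially identical to the paper's own argument: both simply chain the monotonicity of $\Phi$ with the monotonicity of $S(\cdot,u)$ to conclude $T_u(v_1) = S(\Phi(v_1),u) \leq S(\Phi(v_2),u) = T_u(v_2)$.
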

\begin{proof}
Due to our assumptions on $\Phi$ and $S$, $v_1 \leq v_2$ implies \mbox{$\Phi(v_1) \leq \Phi(v_2)$} and 
$T_u(v_1) =  S(\Phi(v_1), u) \leq S(\Phi(v_2), u) = T_u(v_2)$.
This proves the claim. 
\end{proof}

Next, we recall two classical concepts.

\begin{definition}[sub- and supersolutions]%
Let $u \in U$ be fixed. 
A $v \in L^2(X)$ 
is called a subsolution of \eqref{eq:FP} with parameter $u$ if $v \leq  T_u(v)$ 
and a supersolution of \eqref{eq:FP} with parameter $u$ if $v \geq  T_u(v)$.
\end{definition}

Using the monotonicity properties of $S$ and $\Phi$,
the nonnegativity of $S$, and the largest element $\bar p$ of $\bar P$, we obtain
bounds on the solutions of \eqref{eq:FP}.

\begin{lemma}
\label{lemma:subsuper}
Let $u \in U$ be fixed and consider the problem \eqref{eq:FP} with parameter $u$. 
Then $S(\bar p, u) \in L^2_+(X)$ is a 
supersolution of  \eqref{eq:FP} and the zero function is a subsolution of \eqref{eq:FP}.
Further, for every subsolution $v$ of \eqref{eq:FP}, it holds $v \leq S(\bar p, u)$ and,
for every supersolution $v$ of \eqref{eq:FP}, it holds $v \geq 0$.
In particular, all solutions $y$ of \eqref{eq:FP} (should they exist) satisfy $0 \leq y \leq S(\bar p, u)$.
\end{lemma}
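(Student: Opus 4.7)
The plan is to verify each of the five assertions by a direct application of the two standing monotonicity properties together with the defining inequalities $S\colon \bar P\times U \to L^2_+(X)$ and $p \leq \bar p$ for every $p\in \bar P$.

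First I would dispatch the claim that the zero function is a subsolution. Since $S$ takes values in $L^2_+(X)$, we have $T_u(0) = S(\Phi(0), u) \geq 0$ in the $\mu$-a.e.\ sense, which is precisely the subsolution inequality $0 \leq T_u(0)$. Symmetrically, the supersolution claim for $S(\bar p, u)$ follows from the fact that $\Phi(S(\bar p, u)) \in P \subset \bar P$ satisfies $\Phi(S(\bar p, u)) \leq \bar p$ by the maximality of $\bar p$, so monotonicity of $S$ in its first argument yields
\begin{equation*}
T_u(S(\bar p, u)) = S(\Phi(S(\bar p, u)), u) \leq S(\bar p, u),
\end{equation*}
which is the required inequality $S(\bar p, u) \geq T_u(S(\bar p, u))$.

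Next I would handle the two a priori bounds on sub- and supersolutions by chaining these same monotonicity facts. For any subsolution $v$, using $\Phi(v) \leq \bar p$ and monotonicity of $S$,
\begin{equation*}
v \leq T_u(v) = S(\Phi(v), u) \leq S(\bar p, u),
\end{equation*}
and for any supersolution $v$,
\begin{equation*}
v \geq T_u(v) = S(\Phi(v), u) \geq 0,
\end{equation*}
where the last inequality again uses that $S$ takes values in $L^2_+(X)$. The final statement on solutions is then immediate: a solution $y$ of \eqref{eq:FP} is simultaneously a sub- and a supersolution, so the combined bounds yield $0 \leq y \leq S(\bar p, u)$.

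There is no real obstacle here; the only thing to be slightly careful about is the small bookkeeping step that $\Phi$ maps into $P \subseteq \bar P$, so that $S(\Phi(\cdot), u)$ is well-defined and one is entitled to invoke $\Phi(v) \leq \bar p$ via the largest-element property of $\bar p$. Everything else reduces to a one-line application of \cref{lemma:Tincreasing} (or the underlying monotonicity of $S$ and $\Phi$) and the nonnegativity of the range of $S$.
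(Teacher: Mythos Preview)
Your proof is correct and follows essentially the same approach as the paper: both arguments use the nonnegativity of $S$ to show that $0$ is a subsolution, the maximality of $\bar p$ together with monotonicity of $S$ in its first argument to show that $S(\bar p,u)$ is a supersolution, and then chain the same inequalities to bound arbitrary sub- and supersolutions. Your version is slightly more explicit about the bookkeeping step $\Phi(v)\leq \bar p$, but otherwise the two proofs are interchangeable.
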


\begin{proof}
Due to the mapping properties of $S$ and $\Phi$, it holds 
$0 \leq S(\Phi(0), u) = T_u(0)$
and
$S(\bar p, u) \geq S(\Phi(S(\bar p, u) ), u) = T_u(S(\bar p, u))$. 
Thus, the zero function is indeed a subsolution of  \eqref{eq:FP} and $S(\bar p, u)$ is indeed a supersolution of  \eqref{eq:FP}. 
For all subsolutions $v$  of \eqref{eq:FP}, we further have 
$v \leq T_u(v) = S(\Phi(v), u) \leq S(\bar p, u)$,
and, for all supersolutions $v$ of \eqref{eq:FP}, it holds 
$v \geq T_u(v) = S(\Phi(v), u) \geq 0$.
This completes the proof. 
\end{proof}

Via the theorem of Birkhoff-Tartar, we can now prove the existence of minimal and  maximal solutions.

\begin{theorem}[solvability of \eqref{eq:FP}]%
\label{th:solvability}%
Suppose that a $u \in U$ is given and consider the problem \eqref{eq:FP} with parameter $u$. 
Then the set of solutions $\mathbb{S}(u) \subset L^2(X)$ of \eqref{eq:FP}
is nonempty.
Further, there exist unique solutions $m(u), M(u) \in \mathbb{S}(u)$ of \eqref{eq:FP} such that,
for every subsolution $v$ of \eqref{eq:FP}, it holds $v \leq M(u)$
and such that, for every supersolution $v$ of \eqref{eq:FP}, it holds $m(u) \leq v$.
In particular,
\begin{equation}
\label{eq:bshsg16}
m(u) \leq y \leq M(u)  \quad \forall y \in \mathbb{S}(u). 
\end{equation}
\end{theorem}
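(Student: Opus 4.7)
The plan is to invoke the Birkhoff--Tartar fixed-point theorem on the order interval $I := [0, S(\bar p, u)] \subset L^2(X)$. All prerequisites are at hand: $T_u$ is nondecreasing by \cref{lemma:Tincreasing}; by \cref{lemma:subsuper}, $0$ is a subsolution and $S(\bar p, u)$ is a supersolution of \eqref{eq:FP}; and the monotonicity and nonnegativity of $S$ together with the fact that $\bar p$ is the largest element of $\bar P$ guarantee that $T_u$ maps $I$ into itself. The underlying lattice-theoretic property that drives the argument is the Dedekind completeness of $L^2(X)$ (a standard feature of $L^p$ for $p < \infty$), which ensures that every nonempty bounded subset of $I$ admits a supremum and an infimum in $L^2(X)$.

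Given these ingredients, the maximal solution $M(u)$ would be constructed as $\sup \mathcal{A}$, where $\mathcal{A} := \{w \in I \mid w \leq T_u(w)\}$ denotes the collection of subsolutions lying in $I$. This set is nonempty (it contains $0$) and bounded above by $S(\bar p, u)$. Monotonicity of $T_u$ immediately yields $w \leq T_u(w) \leq T_u(M(u))$ for every $w \in \mathcal{A}$, so $M(u) \in \mathcal{A}$; applying $T_u$ once more shows $T_u(M(u)) \in \mathcal{A}$, which forces $T_u(M(u)) \leq M(u)$ and hence $T_u(M(u)) = M(u)$. A dual construction with $\mathcal{B} := \{w \in I \mid T_u(w) \leq w\}$ and $m(u) := \inf \mathcal{B}$ yields the minimal solution; in particular $\mathbb{S}(u) \neq \emptyset$.

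The step requiring the most care is the extension of maximality and minimality from $\mathcal{A}$ and $\mathcal{B}$ to \emph{all} sub- and supersolutions of \eqref{eq:FP}, which a priori need not lie in $I$. For a subsolution $v$, \cref{lemma:subsuper} already supplies $v \leq S(\bar p, u)$, and the truncation $\tilde v := v \vee 0$ lies in $I$; the nondecreasing property of $T_u$ gives $\tilde v = v \vee 0 \leq T_u(v) \vee T_u(0) \leq T_u(\tilde v)$, so $\tilde v \in \mathcal{A}$, yielding $v \leq \tilde v \leq M(u)$. A symmetric truncation $\tilde v := v \wedge S(\bar p, u)$ together with \cref{lemma:subsuper} handles supersolutions and $m(u)$. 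Uniqueness of $M(u)$ and $m(u)$ is then automatic: any two candidates with the defining extremal property are themselves solutions, hence mutually dominate one another, so antisymmetry of $\leq$ forces equality. Finally, \eqref{eq:bshsg16} follows from the trivial observation that every $y \in \mathbb{S}(u)$ is simultaneously a sub- and a supersolution.
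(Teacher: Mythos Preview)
Your proof is correct and rests on the same core idea as the paper's: apply the Birkhoff--Tartar theorem to the nondecreasing map $T_u$ on the order interval $[0,S(\bar p,u)]$, using \cref{lemma:Tincreasing,lemma:subsuper}. The execution differs in two places worth noting. First, the paper simply cites Birkhoff--Tartar as a black box (\cite[section~15.2.2, Proposition~2]{Aubin1979}), whereas you spell out the standard construction $M(u)=\sup\mathcal{A}$, $m(u)=\inf\mathcal{B}$ explicitly via Dedekind completeness of $L^2(X)$; this makes your argument more self-contained at the cost of a little extra length. Second, and more interestingly, the extension to \emph{arbitrary} sub- and supersolutions is handled differently: the paper, given a subsolution $v$, invokes Birkhoff--Tartar a second time on the interval $[v,S(\bar p,u)]$ to produce a solution $y$ with $v\le y\le M(u)$, while you instead truncate $v$ to $\tilde v = v\vee 0$ and verify directly that $\tilde v\in\mathcal{A}$, whence $v\le\tilde v\le M(u)$. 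Both routes are short; yours avoids a second appeal to the fixed-point theorem and exploits the explicit supremum characterization of $M(u)$ that you already have in hand.
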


\begin{proof}
Since $T_u\colon L^2(X) \to L^2(X)$ is nondecreasing by \cref{lemma:Tincreasing} 
and since the functions $0$ and $S(\bar p, u)$ 
provide a sub- and a supersolution for \eqref{eq:FP} with $0 \leq S(\bar p, u)$ by \cref{lemma:subsuper}, 
the existence of solutions of \eqref{eq:FP}  
follows immediately from the theorem of Birkhoff-Tartar, see \cite[section 15.2.2, Proposition 2]{Aubin1979}.
This theorem also implies the existence of elements $m(u), M(u) \in \mathbb{S}(u)$
such that, for every solution $y$ of \eqref{eq:FP} with $0 \leq y \leq S(\bar p, u)$, we have $m(u) \leq y \leq M(u)$.
Since \cref{lemma:subsuper} yields that all $y \in \mathbb{S}(u)$ have to satisfy $0 \leq y \leq S(\bar p, u)$, 
\eqref{eq:bshsg16} and the uniqueness of $m(u)$ and $M(u)$ now follow immediately. 
Consider now an arbitrary but fixed subsolution $v \in L^2(X)$ of \eqref{eq:FP}.
Then \cref{lemma:subsuper} implies that $v \leq S(\bar p, u)$ holds, 
and we may again invoke the theorem of  Birkhoff-Tartar
to deduce that \eqref{eq:FP} admits at least one solution $y$ with $v \leq y \leq S(\bar p, u)$.
According to \eqref{eq:bshsg16}, this solution $y$ has to satisfy $y \leq M(u)$
which implies $v \leq M(u)$. All subsolutions are thus smaller than $M(u)$ as claimed. 
To prove that $m(u)$ is smaller than every supersolution, we can use the same arguments. 
\end{proof}

In the above situation, the solutions $m(u)$ and $M(u)$ are called the minimal and the maximal 
solution of \eqref{eq:FP}, respectively. Note that this notation makes sense since \eqref{eq:bshsg16} 
implies that $\mathbb{S}(u) \subset L^2(X)$ possesses a smallest and a largest element
and since, as a consequence, the minimal and the maximal element of $\mathbb{S}(u) $ are unique.
We would like to point out that \cref{th:solvability} shows that
the set of solutions of \eqref{eq:FP} can be studied as a whole and that 
it is not necessary to restrict the analysis to those solutions $y$ of \eqref{eq:FP} that satisfy 
$\sub y \leq y \leq \super y$ for the sub- and supersolutions $\sub y$ and $\super y$ 
appearing in the theorem of Birkhoff-Tartar, cf.\ \cite[Theorem~2]{Alphonse2020-1}. 
Having established the solvability of \eqref{eq:FP}, we now turn 
our attention to questions of Lipschitz stability. 


\section{Lipschitz continuity of the minimal and maximal solution map}
\label{sec:3}

To prove Lipschitz stability estimates for the minimal and the maximal solution 
mapping $m\colon U \to L^2(X)$ and $M\colon U \to L^2(X)$ associated with \eqref{eq:FP}, we require additional assumptions
on the involved sets, spaces, and functions.

\begin{assumption}[additional assumptions for proving local Lipschitz continuity]%
\label{ass:Lipschitz}%
In addition to the standing assumptions in \cref{subsec:2.1}, we require the following:
\begin{enumerate}
\item $P$ is a subset of a real vector space satisfying $\lambda p \in P$ for all $p \in P$, $\lambda \in (0, 1]$.
\item $U$ is a subset of $L^\infty_+(Y)$
for some complete measure space $(Y, \Xi, \eta)$
and it holds $\lambda u \in U$ for all $u \in U$, $\lambda \in (0, 1]$. 
\item $S$ satisfies $\lambda S(p, u) \leq S(\lambda p, \lambda u)$
for all $p \in P$, $u \in  U$, and $\lambda \in (0, 1]$,
and it holds $S(p, u_1) \leq S(p, u_2)$ for all $u_1, u_2 \in U$ with $u_1 \leq u_2$ and all $p \in P$.
\item\label{item:Lipschitz_4} $\Phi$ satisfies
$\lambda \Phi(v) \leq  \Phi(\lambda v)$ for all \mbox{$v \in L^2_+(X)$, $\lambda \in (0,1]$.}
\end{enumerate}
\end{assumption}

For examples of problems satisfying \cref{ass:Lipschitz}, 
we refer to \cref{sec:6}.
We remark that a superhomogeneity condition
analogous to that in \cref{ass:Lipschitz}\ref{item:Lipschitz_4}
has already been used in 
\cite[Theorem~4]{Alphonse2020-1}. Compare also with the earlier work  \cite{Laetsch1975}
in this context. 
In the situation of \cref{ass:Lipschitz}, we can employ \cref{th:solvability} to establish the 
following result.

\begin{theorem}[Lipschitz continuity of the minimal and maximal solution map]%
\label{thm:solutions_lipschitz}%
Suppose that \cref{ass:Lipschitz} holds 
and let $q \in [1, \infty]$ be an exponent satisfying $S(p, u) \in L^q(X)$ for all $p \in P$ and all $u \in U$. 
Then, for every $u \in U$ satisfying $u \geq c$ in $L^\infty(Y)$ for some constant $c > 0$
and every $v \in U$ satisfying $\|u - v\|_{L^\infty(Y)} \leq c - \rho$ for some $0 < \rho < c$, 
the minimal and maximal solutions $m(u)$, $m(v)$, $M(u)$, and $M(v)$ of 
\eqref{eq:FP} associated with $u$ and $v$ satisfy the stability estimates
\begin{equation}
\label{eq:mLipschitzEstimate}
	\norm{m(u) - m(v)}_{L^q(X)}
	\le
	\frac{1}{\rho}  \, \|m(u)\|_{L^q(X)}\, \norm{u - v}_{L^\infty(Y)}\phantom{.}
\end{equation}
and
\begin{equation}
\label{eq:MLipschitzEstimate}
	\norm{M(u) - M(v)}_{L^q(X)}
	\le
	\frac{1}{\rho} \, \|M(u)\|_{L^q(X)} \, \norm{u - v}_{L^\infty(Y)}.
\end{equation}
\end{theorem}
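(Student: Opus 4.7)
The plan is to combine the monotonicity of $m$ and $M$ in $u$ with a pointwise scaling argument driven by the superhomogeneity assumptions in \cref{ass:Lipschitz}, extract sharp pointwise bounds on $m(u) - m(v)$ and $M(u) - M(v)$, and then take $L^q$-norms. First I would record that $m, M\colon U \to L^2(X)$ are nondecreasing: since $S$ is nondecreasing in its second argument, $u_1 \leq u_2$ in $U$ implies $T_{u_1}(y) \leq T_{u_2}(y)$ for every $y \in L^2(X)$, so every fixed point of $T_{u_1}$ is a subsolution of the problem with parameter $u_2$ and every fixed point of $T_{u_2}$ is a supersolution of the problem with parameter $u_1$; \cref{th:solvability} then yields $m(u_1) \leq m(u_2)$ and $M(u_1) \leq M(u_2)$.

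The heart of the proof is the pointwise scaling inequality
\begin{equation*}
\lambda T_u(y) = \lambda S(\Phi(y), u) \leq S(\lambda \Phi(y), \lambda u) \leq S(\Phi(\lambda y), \lambda u) = T_{\lambda u}(\lambda y),
\end{equation*}
valid for all $\lambda \in (0,1]$, $u \in U$, and $y \in L^2_+(X)$, which follows by chaining the two superhomogeneity conditions. From this I would extract two scaling relations for the extremal selections. Applied with $y = M(u)$, the inequality shows that $\lambda M(u)$ is a subsolution of the problem with parameter $\lambda u$, so $\lambda M(u) \leq M(\lambda u)$ by \cref{th:solvability}. For $m$, a dual argument is required: for any $y_1 \in \mathbb{S}(\lambda u)$, the function $y_1/\lambda \in L^2_+(X)$ satisfies $\lambda T_u(y_1/\lambda) \leq T_{\lambda u}(y_1) = y_1$, hence $T_u(y_1/\lambda) \leq y_1/\lambda$, so $y_1/\lambda$ is a supersolution of the problem with parameter $u$ and \cref{th:solvability} gives $\lambda m(u) \leq y_1$. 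Choosing $y_1 := m(\lambda u)$ yields the desired $\lambda m(u) \leq m(\lambda u)$.

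With both scaling relations in hand, I would set $\delta := \|u - v\|_{L^\infty(Y)}$ and pick the scaling factor $\lambda := 1 - \delta/c \in (0,1)$. Using $u \geq c$ together with $|u-v| \leq \delta$ pointwise, a short computation shows $\lambda u \leq v$, and since $v/u \leq 1 + \delta/c$ pointwise, also $\lambda v \leq u$. Combining with the monotonicity of $m$ yields $\lambda m(u) \leq m(\lambda u) \leq m(v)$ and $\lambda m(v) \leq m(\lambda v) \leq m(u)$, hence pointwise
\begin{equation*}
m(u) - m(v) \leq (1-\lambda)\, m(u) = \frac{\delta}{c}\, m(u), \qquad m(v) - m(u) \leq \frac{1-\lambda}{\lambda}\, m(u) = \frac{\delta}{c-\delta}\, m(u).
\end{equation*}
Since $\delta \leq c - \rho$ gives $c - \delta \geq \rho$, both bounds are dominated by $(\delta/\rho)\, m(u)$, so $|m(u) - m(v)| \leq (\delta/\rho)\, m(u)$ $\mu$-a.e., and taking $L^q$-norms delivers \eqref{eq:mLipschitzEstimate}. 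The identical argument with $M$ in place of $m$ produces \eqref{eq:MLipschitzEstimate}.

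The main obstacle I expect is identifying the correct scaling relation for the minimal selection. The naive route --- showing that $\lambda m(u)$ is a subsolution of the problem with parameter $\lambda u$ and invoking \cref{th:solvability} --- only delivers $\lambda m(u) \leq M(\lambda u)$, which is the wrong selection and does not permit an estimate on $m(v)$. The remedy is to read the scaling inequality in its \emph{divided} form: dividing by $\lambda$ converts any solution at parameter $\lambda u$ into a supersolution at parameter $u$, which brings $m(u)$ (rather than $M(u)$) into the comparison and matches the selection appearing in \eqref{eq:mLipschitzEstimate}.
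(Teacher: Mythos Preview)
Your proposal is correct and follows essentially the same route as the paper's proof: both set $\lambda := 1 - \|u-v\|_{L^\infty(Y)}/c$, use the superhomogeneity of $S$ and $\Phi$ to produce sub-/supersolutions at the scaled parameter, and compare via \cref{th:solvability} to obtain the pointwise bound $|M(u)-M(v)| \le (\lambda^{-1}-1)\,M(u)$ (and likewise for $m$). The only cosmetic difference is that you factor the argument as ``scaling relation $\lambda M(u)\le M(\lambda u)$'' followed by ``monotonicity $M(\lambda u)\le M(v)$'', whereas the paper merges these into a single chain showing directly that $\lambda M(u)$ is a subsolution at parameter $v$; the treatment of $m$ via the divided form (producing a supersolution) is identical in substance to the paper's.
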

\begin{proof}
	Suppose that $u, v \in U$ with constants $0 < \rho < c$ as in the theorem are given.
	Set $\varepsilon := \norm{u - v}_{L^\infty(Y)} < c$
	and
	$\lambda := 1 - \varepsilon/c \in (0,1]$.
	Then it holds 
	\[
		(\lambda - 1) \, u
		\le
		(\lambda - 1) \, c
		=
		-\varepsilon
		\le
		v - u
		\le
		\varepsilon
		=
		(1 - \lambda) \, c
		\le
		\frac{1 - \lambda}{\lambda} \, c
		\le
		\paren[\Big]{ \frac1\lambda - 1 } \, u
	\]
	and, as a consequence, $\lambda \, u \le v \le  \lambda^{-1} \, u$ in $L^\infty(Y)$. 
	Define $y := M(u) \in L^2_+(X)$ and $z := M(v) \in L^2_+(X)$.
	From
	\[
		\lambda \, y
		=
		\lambda \, S(\Phi(y), u)
		\leq
		S(\lambda \, \Phi(y), \lambda \, u)
		\le
		S(\lambda \, \Phi(y), v)
		\le
		S(\Phi(\lambda \, y), v)
		=
		T_v(\lambda \, y),
	\]
	we get that
	$\lambda \, y$ is a subsolution of \eqref{eq:FP} with parameter $v$. 
	Since $z = M(v)$, this implies $\lambda \, y \le z$, see \cref{th:solvability}.
	Analogously, we obtain
	\[
		\lambda \, z
		=
		\lambda \, S(\Phi(z), v)
		\leq
		S(\lambda \, \Phi(z), \lambda \, v)
		\le
		S(\lambda \, \Phi(z), u)
		\le
		S(\Phi(\lambda \, z), u)
		=
		T_u(\lambda \, z),
	\]	
	so that 
	$\lambda \, z$ is a subsolution of \eqref{eq:FP} with parameter $u$. 
	Again by \cref{th:solvability}, this yields $\lambda z \leq M(u) = y$. In summary, we have now proved that 
$(\lambda - 1) \, y \le z - y \le ( \lambda^{-1} - 1 ) \, y$.
	Since $\abs{\lambda - 1} = 1-\lambda \le \lambda^{-1} - 1$,
	it follows
	\begin{equation}
	\label{eq:randomeq2635}
		|M(v) - M(u)|
		\le
		(\lambda^{-1} - 1) \, |M(u)| 
		=
		\frac{\varepsilon}{c - \varepsilon} \, |M(u)| 
		\le
		\frac{1}{\rho}  \, \norm{u - v}_{L^\infty(Y)}\, |M(u)|
	\end{equation}
	$\mu$-a.e.\ in $X$.
	For the minimal solutions,
	we can argue along the same lines:
	Set  $y := m(u) \in L^2_+(X)$ and $z := m(v) \in L^2_+(X)$. 
	Then the properties of $S$ and $\Phi$ yield
	\[
		\lambda S(\Phi(\lambda^{-1} \, y), v)
		\leq
		 S(\lambda\Phi(\lambda^{-1} \, y), \lambda v)
		 \le
		 S(\lambda\Phi(\lambda^{-1} \, y), u)
		 \le
		  S(\Phi(y), u)
		=
		y
	\]
	and 
	\[
		\lambda S(\Phi(\lambda^{-1} \, z), u)
		\leq
		 S(\lambda\Phi(\lambda^{-1} \, z), \lambda u)
		 \le
		 S(\lambda\Phi(\lambda^{-1} \, z), v)
		 \le
		  S(\Phi(z), v)
		=
		z.
	\]
	The functions $\lambda^{-1} y$ and $\lambda^{-1}z$ are thus supersolutions of \eqref{eq:FP} with parameters 
	$v$ and $u$, respectively, 
	and we again obtain from \cref{th:solvability} that $\lambda^{-1} y \geq m(v) = z$ and $\lambda^{-1} z \geq m(u) = y$
	holds and, as a consequence, that $(\lambda - 1) \, y \le z - y \le ( \lambda^{-1} - 1 ) \, y$. 
	This estimate and the same calculation as in \eqref{eq:randomeq2635} yield
	\begin{equation}
	\label{eq:randomeq2635-2}
		|m(v) - m(u)|
		\le
		\frac{1}{\rho}  \, \norm{u - v}_{L^\infty(Y)}\, |m(u)|
	\end{equation}
	$\mu$-a.e.\ in $X$.
	To finish the proof, it now suffices to integrate (or take the essential supremum)
	in \eqref{eq:randomeq2635} and \eqref{eq:randomeq2635-2} 
	and to use that the assumptions on $S$ and the equation \eqref{eq:FP} imply that 
	$m(u)$, $m(v)$, $M(u)$, and $M(v)$ are elements of $L^q(X)$. 
\end{proof} 

\begin{remark}~\label{rem:LipschitzComments}
\begin{enumerate}
\item The choice $q=2$ is always possible in \cref{thm:solutions_lipschitz}
by our standing assumptions on the mapping behavior of $S$, see \cref{subsec:2.1}.

\item\label{rem:LipschitzComments:ii}
 It is easy to check that the estimates \eqref{eq:mLipschitzEstimate} and \eqref{eq:MLipschitzEstimate}
imply that, for every $u \in U \cap L^\infty_{\oplus}(Y)$, there exist constants $C, r>0$ satisfying
\begin{equation}
\label{eq:randomeq273545}
	\norm{m(v_1) - m(v_2)}_{L^q(X)} + \norm{M(v_1) - M(v_2)}_{L^q(X)}
	\le
	 C\norm{v_1 - v_2}_{L^\infty(Y)}\phantom{.}
\end{equation}
for all $v_1, v_2 \in U$ with $\norm{v_i - u}_{L^\infty(Y)} \leq r$, $i=1,2$. 
The maps $m$ and $M$ are thus indeed locally Lipschitz continuous
on the set $U \cap L^\infty_{\oplus}(Y)$.

\item \Cref{thm:solutions_lipschitz} generalizes \cite[Theorem~4]{Alphonse2020-1} 
in the sense that it shows that the minimal and maximal solution map $m$ and $M$
of \eqref{eq:FP} are not only continuous as functions from 
$U \cap L^\infty_{\oplus}(Y)$ into the space $L^2(X)$ but even locally Lipschitz continuous 
as functions from $U \cap L^\infty_{\oplus}(Y)$ into every space $L^q(X)$, $1 \leq q \leq \infty$, that the operator $S$ maps into. 
\Cref{thm:solutions_lipschitz} further illustrates that this local Lipschitz stability relies solely 
on the order properties in \cref{ass:Lipschitz} and does 
not require, e.g., the assumption that there is an underlying Gelfand triple structure,
that the map $\Phi$ possesses complete continuity properties, 
or that the map $S$ is positively homogeneous,
cf.\ \cite[section 2.1, Assumption 1]{Alphonse2020-1}. 
As we will see in \cref{sec:6}, the lack of these restrictions in particular allows us to apply 
\cref{thm:solutions_lipschitz} to parabolic quasi-variational inequalities. 
Note that our proof of \cref{thm:solutions_lipschitz} is also much simpler than the one in \cite{Alphonse2020-1}.

\item\label{rem:LipschitzComments:iv}
If, in the situation of \cref{thm:solutions_lipschitz},
it is known that there exists a reflexive Banach space $V \subset L^2(X)$
such that $V$ is continuously embedded into $L^2(X)$
and such that, for every $u \in U \cap L^\infty_{\oplus}(Y)$, 
there exist constants $C, r > 0$
with $\|S(p, v)\|_V \leq C$ for all $v \in U$ with $\|u - v\|_{L^\infty(Y)} \leq r$ and all $p \in P$, 
then it follows immediately that every sequence $\{u_n\} \subset  U \cap L^\infty_{\oplus}(Y)$
which converges in $L^\infty(Y)$ to a point $u \in U \cap L^\infty_{\oplus}(Y)$
not only satisfies $m(u_n) \to m(u)$ and $M(u_n) \to M(u)$ in $L^2(X)$
but also $m(u_n) \weakly m(u)$ and $M(u_n) \weakly M(u)$ in $V$. 
Indeed, in this case, the sequences $m(u_n) = S(\Phi(m(u_n)), u_n)$ and $M(u_n) =  S(\Phi(M(u_n)), u_n)$ 
are clearly bounded 
in $V$ and the convergences $m(u_n) \weakly m(u)$ and $M(u_n) \weakly M(u)$
in $V$ are straightforwardly obtained by applying the theorem of Banach-Alaoglu
and trivial contradiction arguments. We will get back to this topic in \cref{sec:6},
where we will show that such additional convergence properties are in particular available 
for elliptic and parabolic QVIs of obstacle type. 
\end{enumerate}
\end{remark}


\section{Directional differentiability of $\boldsymbol{M}$ via pointwise concavity}%
\label{sec:4}%
Next, we consider questions of differentiability. 
We would like to point out that we will not approach this topic
by using concepts like polyhedricity etc.\ known from the analysis of elliptic variational inequalities, 
see \cite{Haraux1977,Mignot1976,Wachsmuth2019}, but rather by 
considering pointwise curvature properties similar to those already exploited in \cref{thm:solutions_lipschitz}. 
This approach to the sensitivity analysis of 
nonsmooth systems has already been used in \cite{Brokate2015,Christof2019parob} 
to establish directional differentiability results for solution 
operators of obstacle-type evolution variational inequalities
and is very natural for problems of the form \eqref{eq:FP}
whose solvability can also be discussed with pointwise arguments, see \cref{subsec:2.2}.
As in the last section, we require some additional assumptions for our analysis.

\begin{assumption}[additional assumptions for proving pointwise differentiability]%
\label{ass:DirDiff}%
In addition to the standing assumptions in \cref{subsec:2.1}, we require the following:
\begin{enumerate}
\item $P$ and $U$ are convex subsets of real vector spaces. 
\item $S$ satisfies 
$\lambda S(p_1, u_1) + (1 - \lambda)  S(p_2, u_2) \leq S(\lambda p_1 + (1 - \lambda)p_2, \lambda u_1 + (1 - \lambda)u_2)$
for all $p_1, p_2 \in P$, $u_1, u_2 \in  U$, and $\lambda \in [0, 1]$.
\item $\Phi$ satisfies
$\lambda \Phi(v_1) + (1 - \lambda)\Phi(v_2) \leq  \Phi(\lambda v_1 + (1 - \lambda) v_2)$ 
for all $v_1, v_2 \in L^2_+(X)$ and $\lambda \in [0,1]$.
\end{enumerate}
\end{assumption}

For examples of problems that satisfy the above conditions, we again refer the reader to \cref{sec:6}. 

\begin{proposition}[pointwise concavity of the maximal solution map $M$]%
\label{prop:Mconcave}%
Suppose that  \cref{ass:DirDiff} holds.
Then, for all $u_1, u_2 \in U$ and all $\lambda \in [0, 1]$, it is true that 
\begin{equation}
\label{eq:concaveMineq}
\lambda M(u_1) + (1 - \lambda)M(u_2) \leq M(\lambda u_1 + (1-\lambda)u_2).
\end{equation}
\end{proposition}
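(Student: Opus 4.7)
The plan is to show that $w := \lambda M(u_1) + (1-\lambda)M(u_2)$ is a subsolution of \eqref{eq:FP} with parameter $u_\lambda := \lambda u_1 + (1-\lambda)u_2$ and then to invoke the fact, established in \cref{th:solvability}, that every subsolution is bounded above by the maximal solution. This immediately yields \eqref{eq:concaveMineq}.

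To carry this out, I would first record that $M(u_1), M(u_2) \in L^2_+(X)$ by construction, so that $w \in L^2_+(X)$ and $\Phi$ may be evaluated at $w$ as well as at $M(u_1)$ and $M(u_2)$. Using the fixed-point identity $M(u_i) = S(\Phi(M(u_i)), u_i)$ and the concavity of $S$ from \cref{ass:DirDiff}, I would then compute
\begin{equation*}
w = \lambda S(\Phi(M(u_1)), u_1) + (1-\lambda) S(\Phi(M(u_2)), u_2) \leq S\bigl(\lambda \Phi(M(u_1)) + (1-\lambda)\Phi(M(u_2)),\, u_\lambda\bigr),
\end{equation*}
where the argument on the right lies in $P$ by convexity of $P$. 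Next, the concavity of $\Phi$ gives
\begin{equation*}
\lambda \Phi(M(u_1)) + (1-\lambda)\Phi(M(u_2)) \leq \Phi(w),
\end{equation*}
and combining this with the monotonicity of $S$ in its first argument (from our standing assumptions in \cref{subsec:2.1}) shows that
\begin{equation*}
w \leq S(\Phi(w), u_\lambda) = T_{u_\lambda}(w).
\end{equation*}
Thus $w$ is indeed a subsolution of \eqref{eq:FP} with parameter $u_\lambda$.

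With this in hand, the final step is to apply \cref{th:solvability}, which guarantees that every subsolution of \eqref{eq:FP} is dominated by $M(u_\lambda)$, yielding $\lambda M(u_1) + (1-\lambda)M(u_2) = w \leq M(u_\lambda)$. There is no genuine obstacle here: the proof is essentially a one-line chain of inequalities combining the concavity of $S$, the concavity of $\Phi$, the monotonicity of $S$ in its first argument, and the maximal-subsolution characterization of $M$. The only minor bookkeeping concern is verifying that all arguments remain in the admissible sets $P$ and $L^2_+(X)$, which follows from convexity of $P$ and the nonnegativity of the values of $M$.
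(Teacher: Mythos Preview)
Your proposal is correct and follows essentially the same approach as the paper: both arguments use the concavity of $S$ and $\Phi$ together with the monotonicity of $S$ in its first argument to show that $\lambda M(u_1) + (1-\lambda)M(u_2)$ is a subsolution of \eqref{eq:FP} with parameter $\lambda u_1 + (1-\lambda)u_2$, and then invoke \cref{th:solvability}. Your additional bookkeeping remarks about admissibility in $P$ and $L^2_+(X)$ are a welcome touch but do not change the substance.
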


\begin{proof}
Let  $\lambda \in [0, 1]$ and $u_1, u_2 \in U$ be given. 
From the properties of $S$ and $\Phi$, we obtain that
\begin{equation}
\label{eq:randomMeq25}
\begin{aligned}
\lambda M(u_1) + (1 - \lambda)M(u_2) 
& =
\lambda S(\Phi(M(u_1)), u_1) + (1 - \lambda)S(\Phi(M(u_2)), u_2)  
\\
&\leq 
S(\lambda \Phi(M(u_1))+ (1 - \lambda) \Phi(M(u_2)) , \lambda u_1 + (1-\lambda)u_2) 
\\
&\leq 
S( \Phi(\lambda M(u_1) + (1 - \lambda)M(u_2)) , \lambda u_1 + (1-\lambda)u_2)
\\
&=  T_{\lambda u_1 + (1-\lambda)u_2}(\lambda M(u_1) + (1 - \lambda)M(u_2)). 
\end{aligned}
\end{equation}
The function $\lambda M(u_1) + (1 - \lambda)M(u_2)$ is thus 
a subsolution of \eqref{eq:FP}
with parameter $\lambda u_1 + (1-\lambda)u_2$ and we may invoke 
\cref{th:solvability} to arrive at \eqref{eq:concaveMineq}.
\end{proof}

As a straightforward consequence of
\cref{prop:Mconcave}, we obtain the following directional differentiability result.
\begin{theorem}[pointwise directional differentiability of the map $M$]%
\label{th:monononeM}%
Suppose that 
\cref{ass:DirDiff} holds. Assume further that a $u \in U$
and an $h \in \R^+(U - u)$ satisfying $u + \tau h \in U$ for all $\tau \in (0, \tau_0)$, $\tau_0 > 0$,
are given. Define 
\[
\delta_\tau := \frac{M(u + \tau h) - M(u)}{\tau},\qquad \tau \in (0, \tau_0).
\]
Then there exists a unique $\delta \in L^0(X,(-\infty, \infty])$ such that,
for every $\{\tau_n\} \subset (0, \tau_0)$ satisfying $\tau_n  \to 0$, it holds $\delta_{\tau_n} \to \delta$
pointwise $\mu$-a.e.\ in $X $.
\end{theorem}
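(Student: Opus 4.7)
The plan is to exploit the concavity inequality \eqref{eq:concaveMineq} from \cref{prop:Mconcave} to show that the difference quotients $\delta_\tau$ are monotone in $\tau$ (in the pointwise $\mu$-a.e.\ sense), and then to deduce the existence of the pointwise limit from the classical fact that monotone families of measurable functions admit pointwise limits in the extended reals.

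For the monotonicity, I would fix $0 < \tau_1 < \tau_2 < \tau_0$, set $\lambda := \tau_1/\tau_2 \in (0,1]$, and observe that
\[
u + \tau_1 h = \lambda (u + \tau_2 h) + (1 - \lambda) u.
\]
Applying \cref{prop:Mconcave} then gives
\[
M(u + \tau_1 h) \geq \lambda M(u + \tau_2 h) + (1 - \lambda) M(u),
\]
which, after subtracting $M(u)$ and dividing by $\tau_1 = \lambda \tau_2$, yields $\delta_{\tau_1} \geq \delta_{\tau_2}$ pointwise $\mu$-a.e.\ in $X$. Hence the family $\{\delta_\tau\}_{\tau \in (0, \tau_0)}$ is $\mu$-a.e.\ nonincreasing in $\tau$, so that for any sequence $\{\tau_n\} \subset (0, \tau_0)$ with $\tau_n \to 0$, the sequence $\{\delta_{\tau_n}\}$ converges pointwise $\mu$-a.e.\ in $X$ to some value in $[-\infty, \infty]$, and this limit is independent of the particular sequence chosen.

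The main issue to settle is that the limit $\delta$ indeed lies in $L^0(X, (-\infty, \infty])$, i.e., that $\delta > -\infty$ holds $\mu$-a.e. This, however, follows directly from the monotonicity: picking any fixed $\tau^* \in (0, \tau_0)$, the inequality $\delta_\tau \geq \delta_{\tau^*}$ holds $\mu$-a.e.\ for every $\tau \in (0, \tau^*)$, and $\delta_{\tau^*} \in L^2(X)$ by construction (since $M(u+\tau^*h), M(u) \in L^2(X)$ by our standing assumptions on $S$), so $\delta_{\tau^*}$ is finite $\mu$-a.e.\ in $X$. Passing to the limit yields $\delta \geq \delta_{\tau^*} > -\infty$ $\mu$-a.e., so $\delta \in L^0(X, (-\infty, \infty])$ as required. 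Uniqueness is immediate since any two such limits coincide pointwise $\mu$-a.e.\ with the supremum of the measurable family $\{\delta_\tau\}_{\tau \in (0, \tau_0)}$ (or, equivalently, with the limit along any countable sequence $\tau_n \downarrow 0$). No serious obstacle is expected; the only point requiring minor care is the measurability and a.e.\ finiteness of the limit, which is handled by the preceding monotonicity and $L^2$-boundedness observations.
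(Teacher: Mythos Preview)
Your approach is correct and essentially identical to the paper's: both derive the $\mu$-a.e.\ monotonicity $\delta_{\tau_1} \geq \delta_{\tau_2}$ for $\tau_1 < \tau_2$ from \cref{prop:Mconcave} and then pass to the pointwise limit, with the lower bound $\delta \geq \delta_{\tau^*} \in L^2(X)$ ensuring $\delta > -\infty$ $\mu$-a.e.

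One technical point you gloss over and the paper handles explicitly: the inequality $\delta_{\tau_1} \geq \delta_{\tau_2}$ holds only $\mu$-a.e.\ with an exceptional null set depending on the pair $(\tau_1,\tau_2)$, so the statement ``the family $\{\delta_\tau\}_{\tau \in (0,\tau_0)}$ is $\mu$-a.e.\ nonincreasing in $\tau$'' is not literally true for the uncountable family, and convergence along an \emph{arbitrary} (not necessarily monotone) sequence $\tau_n \to 0$ does not follow immediately. The paper resolves this by first defining $\delta := \sup_{n \ge N} \delta_{1/n}$ along the fixed countable sequence $\{1/n\}$ and then, for any given $\{\tau_k\}$, sandwiching $\delta_{1/n} \le \delta_{\tau_k} \le \delta$ using only countably many comparisons. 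Your parenthetical remark about ``the limit along any countable sequence $\tau_n \downarrow 0$'' suggests you are aware of this, but the argument as written needs this step made explicit.
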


\begin{proof}
Suppose that $0 < \tau_2 < \tau_1  < \tau_0$ are given. 
Then it holds
$u + \tau_2 h = (1 - \tau_2/\tau_1 )u + (\tau_2/\tau_1 )(u + \tau_1  h)$
and it follows from \eqref{eq:concaveMineq} that 
\[
M (u + \tau_2 h) \geq \left (1 - \frac{\tau_2}{\tau_1 } \right )M(u) + \frac{\tau_2}{\tau_1 }M(u + \tau_1  h).
\]
The last inequality can also be written as 
\begin{equation}
\label{eq:randomeq182663}
\delta_{\tau_2} = \frac{M (u + \tau_2 h) - M(u)}{\tau_2}\geq \frac{M(u + \tau_1  h) - M(u)}{\tau_1 } = \delta_{\tau_1}.
\end{equation}
This shows that the family $\{\delta_\tau\}$ is $\mu$-a.e.\ nonincreasing in $X$ w.r.t.\ $\tau > 0$ 
in the sense that $\delta_{\tau_2} \geq \delta_{\tau_1}$  holds $\mu$-a.e.\ in $X$ for all $0 < \tau_2 < \tau_1  < \tau_0$.
Consider now an $N \in \mathbb{N}$ with $1/N < \tau_0$ and define $\delta \in L^0(X,(-\infty, \infty])$
via
\begin{equation*}
	\delta(x) := \sup_{n \ge N} \delta_{1/n}(x)\qquad \text{for $\mu$-a.a.\ } x \in X.
\end{equation*}
Then \eqref{eq:randomeq182663} yields that $\delta_{1/n} \to \delta$ holds $\mu$-a.e.\ in $X$ for $N\leq n \to \infty$.
Now, let a sequence $\{\tau_k\} \subset (0,\tau_0)$ with $\tau_k \to 0$ be given.
For every $k \in \N$, there exists $N\leq n \in \N$ with $\tau_k \ge 1/n$.
This yields $\delta_{\tau_k} \le \delta_{1/n}$ 
and, therefore, $\delta_{\tau_k} \le \delta$ $\mu$-a.e.\ in $X$.
Moreover,
for every $N\leq n \in \N$ there exists $K \in \N$ such that $k \ge K$ implies $\tau_k \le 1/n$
and, thus, $\delta_{\tau_k} \ge \delta_{1/n}$ $\mu$-a.e.\ in $X$.
Together with the first inequality, this yields
$\delta \ge \delta_{\tau_k} \ge \delta_{1/n}$ for all $k \ge K$ $\mu$-a.e.\ in $X$
and, since $n \geq N$ was arbitrary, 
$\delta_{\tau_k} \to \delta$  $\mu$-a.e.\ in $X$ for all $\{\tau_k\} \subset (0,\tau_0)$ with $\tau_k \to 0$ as claimed.
The uniqueness of $\delta$ is trivial. 
\end{proof}

In situations in which both the Lipschitz stability result in \cref{thm:solutions_lipschitz}
and the directional differentiability result in \cref{th:monononeM} are applicable, we
immediately get Hadamard directional differentiability for the 
maximal solution map $M$ of \eqref{eq:FP}.

\begin{corollary}[Hadamard directional differentiability of the map $M$]%
\label{cor:Hadamard}%
Suppose that \cref{ass:Lipschitz,ass:DirDiff} hold and 
let $1 \leq r \leq 2 \leq s \leq \infty$ be numbers such that 
$S(p, u) \in L^{[r,s]}(X)$ holds for all $p \in P$, $u \in U$.
Then the maximal solution map $M$ of \eqref{eq:FP} is Hadamard directionally differentiable 
on the set $U \cap L^\infty_{\oplus}(Y)$ in the following sense:
For every $u \in U \cap L^\infty_{\oplus}(Y)$ and every 
$h \in \R^+(U - u)$, 
there exists a unique $M'(u; h) \in \smash{L^{[r,s]}(X)}$ such that,
for all $\{\tau_n\} \subset (0, \infty)$ 
and
 $\{h_n\} \subset \R^+(U - u)$
satisfying $\tau_n \to 0$, $\|h - h_n\|_{L^\infty(Y)} \to 0$, and $u + \tau_n h_n \in U$ for all $n$, we have 
\begin{equation}
\label{eq:Lqdirdiff}
\frac{M(u + \tau_n h_n) - M(u)}{\tau_n} \to M'(u; h) \text{ in } L^q(X) \text{ for all  } q \in [r,s]\setminus \{\infty\}
\end{equation}
and
\begin{equation}
\label{eq:weakstardirdiff}
\frac{M(u + \tau_n h_n) - M(u)}{\tau_n} \weaklystar M'(u; h) \text{ in } L^1(X)^* \text{ in the case } s = \infty.
\end{equation}
Here, \eqref{eq:weakstardirdiff} is to be understood in the sense of equation \eqref{eq:weakstardef} below.
\end{corollary}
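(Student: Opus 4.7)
The plan is to combine the pointwise $\mu$-a.e.\ directional differentiability supplied by \cref{th:monononeM} with the quantitative Lipschitz estimate of \cref{thm:solutions_lipschitz}: the former identifies a candidate derivative as a pointwise limit of difference quotients, and the latter upgrades this limit to convergence in the prescribed norms while simultaneously absorbing the perturbation $h_n - h$ of the direction.

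More concretely, I would first fix $u \in U \cap L^\infty_\oplus(Y)$ and $h \in \R^+(U - u)$, choose $\tau_0 > 0$ with $u + \tau h \in U$ for all $\tau \in (0,\tau_0)$, and set $\delta_\tau := (M(u + \tau h) - M(u))/\tau$. \Cref{th:monononeM} then provides $\delta \in L^0(X, (-\infty, \infty])$ with $\delta_\tau \to \delta$ pointwise $\mu$-a.e., and inequality \eqref{eq:randomeq182663} from its proof shows that $\{\delta_\tau\}$ is $\mu$-a.e.\ nonincreasing in $\tau$, so $\delta_{\tau^*} \leq \delta_\tau \leq \delta$ holds a.e.\ whenever $0 < \tau \leq \tau^* < \tau_0$. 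Applying \cref{thm:solutions_lipschitz} to the pair $(u, u + \tau h)$ produces a constant $C = C(u, h)$ with $\|\delta_\tau\|_{L^q(X)} \leq C$ for all sufficiently small $\tau$ and every $q \in [r, s]$. Fatou's lemma, respectively taking the essential supremum when $q = \infty$, then forces $\delta \in L^{[r,s]}(X)$, and I set $M'(u; h) := \delta$. For each $q \in [r, s] \setminus \{\infty\}$ the squeeze $\delta_{\tau^*} \leq \delta_\tau \leq \delta$ supplies $(|\delta_{\tau^*}| + 2|\delta|)^q \in L^1(X)$ as an integrable dominating function for $|\delta_\tau - \delta|^q$, and the dominated convergence theorem yields $\delta_\tau \to M'(u;h)$ in $L^q(X)$.

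To handle the perturbed direction, given $\tau_n \to 0^+$ and $h_n \to h$ in $L^\infty(Y)$ with $u + \tau_n h_n \in U$, I would split
\begin{equation*}
\frac{M(u + \tau_n h_n) - M(u)}{\tau_n} = \frac{M(u + \tau_n h_n) - M(u + \tau_n h)}{\tau_n} + \delta_{\tau_n}
\end{equation*}
and bound the first summand in $L^q(X)$, respectively $L^\infty(X)$, by $C\|h_n - h\|_{L^\infty(Y)} \to 0$ via \cref{thm:solutions_lipschitz}; combined with $\delta_{\tau_n} \to M'(u;h)$ in $L^q(X)$, this delivers \eqref{eq:Lqdirdiff}. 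In the case $s = \infty$, the uniform $L^\infty$-bound on $\delta_{\tau_n}$ together with its pointwise a.e.\ convergence and dominated convergence applied to $\delta_{\tau_n}\varphi$ with dominant $C|\varphi|$ for each $\varphi \in L^1(X)$ yields the weak-$*$ convergence in \eqref{eq:weakstardirdiff} (the $h_n - h$ error is absorbed by the same splitting, since the first summand even tends to zero in $L^\infty(X)$). The main subtlety I foresee is that \cref{th:monononeM} alone only delivers the limit $\delta$ in $L^0(X, (-\infty, \infty])$; it is precisely the uniform $L^q$-bound from \cref{thm:solutions_lipschitz} that both promotes $M'(u;h)$ into $L^{[r,s]}(X)$ and, together with the pointwise monotonicity in $\tau$, supplies the dominating function needed to upgrade pointwise a.e.\ convergence to norm convergence.
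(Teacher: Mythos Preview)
Your proposal is correct and follows essentially the same route as the paper's proof: both combine the pointwise monotone convergence of the difference quotients from \cref{th:monononeM} with the uniform $L^{[r,s]}$-bounds from \cref{thm:solutions_lipschitz}, invoke Fatou to place $\delta$ in $L^{[r,s]}(X)$, apply dominated convergence (using the monotonicity to produce an integrable majorant) for the $L^q$-convergence, and then split off the $h_n-h$ perturbation via the local Lipschitz estimate; the weak-$*$ argument for $s=\infty$ is likewise identical. The only cosmetic difference is that the paper uses the slightly tighter majorant $\delta-\delta_{\tau^*}$ (since $0\le\delta-\delta_\tau\le\delta-\delta_{\tau^*}$), but your choice works just as well.
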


\begin{proof}
Suppose that $u$, $h$, $\{h_n\}$, and $\{\tau_n\}$ are as in the statement of the corollary.
We assume w.l.o.g.\ that $u + \tau_n h \in U$ holds for all $n$, set $\delta_n := (M(u + \tau_n h) - M(u))/\tau_n$,
and denote with $\delta \in L^0(X,(-\infty, \infty])$ the unique limit function from \cref{th:monononeM} associated with $u$ and $h$. 
Due to \cref{th:monononeM}, we know that $\delta_n \to \delta$ holds $\mu$-a.e.\ in $X$
and, due to \cref{thm:solutions_lipschitz},
that $\norm{\delta_n}_{\smash{L^{[r,s]}}(X)} \le C$ holds for a constant $C \ge 0$.
In the case $s = \infty$, this yields $\norm{\delta}_{L^\infty(X)} \le C$, and, for all $s$, by the lemma of Fatou, 
\begin{equation*}
	\norm{\delta}_{L^q(X)}^q 
	=
	 \int_X \lim_{n \to \infty} \abs{\delta_n}^q \, \d\mu 
	\le
	\liminf_{n \to \infty}   \int_X \abs{\delta_n}^q \, \d\mu 
	\le
	C^q \quad \forall q \in [r,s]\setminus \{\infty\}. 
\end{equation*}
In summary, the above shows that, 
under the assumptions of the corollary, $\delta$ is real-valued $\mu$-a.e.\ in $X$ and can be identified 
with an element of $L^{[r,s]}(X)$.
Next, we establish \eqref{eq:Lqdirdiff} for all $q \in [r,s]\setminus \{\infty\}$.
Using that 
$ 0 \le \abs{\delta - \delta_n} = \delta - \delta_n \le \delta - \delta_1 $
holds $\mu$-a.e.\ in $X$ for all sufficiently large $n$ by \eqref{eq:randomeq182663},
the convergence $\delta_n \to \delta$ in $L^q(X)$ for all $q \in [r,s]\setminus \{\infty\}$ 
follows immediately from the dominated convergence theorem.
Using  
\cref{thm:solutions_lipschitz}
(or, more precisely, its consequence \eqref{eq:randomeq273545}),
we further get
\begin{equation}
\label{eq:randomeq263535}
\begin{aligned}
	&\norm*{ \frac{M(u + \tau_n h_n) - M(u)}{\tau_n} - \delta }_{L^q(X)}
	\\
	&\quad \leq
	\norm*{\frac{M(u + \tau_n h) - M(u)}{\tau_n} - \delta }_{L^q(X)}
	+
	\norm*{\frac{M(u + \tau_n h_n) - M(u + \tau_n h)}{\tau_n} }_{L^q(X)}
	\\
	&\quad\leq
	\norm{\delta_n - \delta}_{L^q(X)} + C \norm{h_n - h}_{L^\infty(Y)}
\end{aligned}
\end{equation}
for all large enough $n$ with some constant $C>0$.
The right-hand side of this estimate converges to zero
for $n \to \infty$.
This proves \eqref{eq:Lqdirdiff}  
with  $M'(u; h) := \delta \in \smash{L^{[r,s]}(X)}$.

It remains
to prove \eqref{eq:weakstardirdiff} in the case $s = \infty$.
For this exponent, 
we obtain from the same arguments as in \eqref{eq:randomeq263535} that the sequence
$(M(u + \tau_n h_n) - M(u))/\tau_n $ is bounded in $L^\infty(X)$ 
and pointwise $\mu$-a.e.\ convergent to $M'(u; h) := \delta \in \smash{L^{[r,\infty]}(X)}$. 
In combination with the dominated convergence theorem,
this yields 
\begin{equation}
\label{eq:weakstardef}
\lim_{n \to \infty} \int_X  z\, \left ( \frac{M(u + \tau_n h_n) - M(u)}{\tau_n} - M'(u; h) \right ) \mathrm{d}\mu 
=
0 \qquad \forall z \in L^1(X).
\end{equation}
This establishes the desired convergence
\eqref{eq:weakstardirdiff} and completes the proof. 
\end{proof}

Recall that the canonical mapping of $L^\infty(X)$ into the topological dual of $L^1(X)$
is an isometric isomorphism if (and only if) the measure $\mu$ is localizable,
see \cite[243G]{Fremlin2010}.
This is in particular the case if $\mu$ is $\sigma$-finite,
see \cite[211L]{Fremlin2010}. If one of these conditions holds, then
the space $L^1(X)^*$ in \eqref{eq:weakstardirdiff} can be replaced by $L^\infty(X)$.

Note that the choice $r=s=2$ is always allowed in \cref{cor:Hadamard} 
by our standing assumptions on the map $S$, see \cref{subsec:2.1}.
It is further easy to check that, if a Lipschitz estimate for $M$ in a reflexive Banach space $V$
is available that is continuously embedded into $L^2(X)$, then \cref{cor:Hadamard}  
implies that the difference quotients on the left-hand side of \eqref{eq:Lqdirdiff} 
also converge weakly in $V$ to $M'(u; h)$, cf.\ \cref{rem:LipschitzComments}\ref{rem:LipschitzComments:iv}. 
Lastly, we would like to point out that the arguments that we have used in this section only work 
for the maximal solution map $M$. Indeed, 
by proceeding along the lines of \eqref{eq:randomMeq25}, 
one only obtains that $\lambda m(u_1) + (1 - \lambda) m(u_2)$
is a subsolution of \eqref{eq:FP} with parameter $\lambda u_1 + (1-\lambda)u_2$
for all $u_1, u_2 \in U$ and $\lambda \in [0,1]$ and this information 
does not allow to conclude that $m$ is convex or concave, cf.\ \cref{th:solvability}. 
To obtain 
differentiability results for the minimal solution map $m$, one can use, e.g., the results of \cite{Wachsmuth2020}. 
This, however, requires far more restrictive assumptions.


\section{Unique characterization of directional derivatives}
\label{sec:5}

Having established the directional differentiability of the maximal solution operator $M$,
we next aim at deriving an auxiliary problem 
that uniquely characterizes the derivatives $M'(u;h)$. In view of classical results 
for differential equations, one would expect that such 
an auxiliary problem can be obtained by ``differentiating'' the left- and 
the right-hand side of \eqref{eq:FP}. Given sufficiently smooth
$S$ and $\Phi$, this means that the natural candidate for an auxiliary problem for $\delta := M'(u;h)$
is (at least formally) the fixed-point equation $\delta= \Psi'( (M(u), u); (\delta, h))$,
where $\Psi$ denotes the composition $\Psi(v, u) := S(\Phi(v), u)$. 
Unfortunately, it turns out that this linearized problem may 
fail to contain any form of useful information even in cases 
in which \cref{cor:Hadamard} is applicable and the directional 
derivatives of $M$ exist.
In fact, we have an instance of \eqref{eq:FP} for which the equation $\delta= \Psi'( (M(u), u); (\delta, h))$
reduces to $\delta = \delta$,
see \cref{subsec:6.1}. To arrive at an auxiliary problem that 
is meaningful, we have to strengthen \cref{ass:Lipschitz,ass:DirDiff}.

\begin{assumption}[additional assumptions for the characterization of derivatives]%
\label{ass:AuxProb}%
In addition to the standing assumptions in \cref{subsec:2.1}, we require the following:
\begin{enumerate}
\item\label{ass:AuxProb:i} $P$ is a convex subset of a real vector space, it holds $0 \in P$, 
and the partial order on $P$ satisfies $\lambda p_1 + (1 - \lambda) p_2 \geq \lambda p_1$ for all $p_1, p_2 \in P$ and $\lambda \in [0,1]$. 
\item\label{ass:AuxProb:ii} $U$ is a convex subset of $L^\infty_+(Y)$ for some complete measure space $(Y, \Xi, \eta)$ 
and it holds $0 \in U$.
\item\label{ass:AuxProb:iii}  $S$ satisfies 
$\lambda S(p_1, u_1) + (1 - \lambda)  S(p_2, u_2) \leq S(\lambda p_1 + (1 - \lambda)p_2, \lambda u_1 + (1 - \lambda)u_2)$
for all $p_1, p_2 \in P$, $u_1, u_2 \in  U$, and $\lambda \in [0, 1]$;
it holds $S(p, u_1) \leq S(p, u_2)$ for all $u_1, u_2 \in U$ with $u_1 \leq u_2$ and all $p \in P$;
and we have $S(p, u) \in L^\infty_+(X)$ for all $p \in P$ and all $u \in U$.
\item\label{ass:AuxProb:iv}
There exists an $\varepsilon > 0$ such that $\lambda \Phi(v_1) + (1 - \lambda)\Phi(v_2) \leq  \Phi(\lambda v_1 + (1 - \lambda) v_2)$ 
holds for all $\lambda \in [0,1]$ and all $v_1, v_2 \in L^2(X)$ with  $v_1 \geq -\varepsilon$ and $v_2 \geq -\varepsilon$.
\end{enumerate}
\end{assumption}

Note that the above conditions imply in particular
that \cref{ass:Lipschitz,ass:DirDiff} are satisfied
(as one may easily check by combining the 
properties in \cref{ass:AuxProb} with the standing assumptions from \cref{subsec:2.1}).  
\Cref{ass:AuxProb} also entails that the problem 
\eqref{eq:FP} is uniquely solvable for all $u \in U$ as the following result shows.

\begin{proposition}[unique solvability of \eqref{eq:FP}]%
\label{prop:soluniqueness}%
Suppose that \cref{ass:AuxProb} holds. Then \eqref{eq:FP} is uniquely solvable for all $u \in U$. 
In particular, it holds $m \equiv M$ on $U$.
\end{proposition}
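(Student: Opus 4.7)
The plan is to prove the reverse inequality $m(u) \geq M(u)$ for every $u \in U$ via a bootstrap argument; combined with the inequality $m(u) \leq M(u)$ already furnished by \cref{th:solvability}, this will give $\mathbb{S}(u) = \{m(u)\} = \{M(u)\}$ and hence unique solvability of \eqref{eq:FP}. Write $y := M(u)$, $z := m(u)$, and $K := \|y\|_{L^\infty(X)}$; note that $y, z \in L^2_+(X) \cap L^\infty_+(X)$ by the fixed-point identities $y = T_u(y)$, $z = T_u(z)$ together with \cref{ass:AuxProb}\ref{ass:AuxProb:iii}.

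The core is the following bootstrap claim: if $z \geq t\, y$ holds $\mu$-a.e.\ in $X$ for some $t \in [0, 1)$, then $z \geq t'\, y$ $\mu$-a.e.\ in $X$ with $t' := (\varepsilon + tK)/(K+\varepsilon)$, which is strictly larger than $t$. To prove this, set $\mu := t'$ and define $v_2 := (z - \mu y)/(1 - \mu) \in L^2(X)$. The hypotheses $z \geq t y$, $y \leq K$, and the choice $\mu = t'$ combine in a short pointwise calculation to give $v_2 \geq -\varepsilon$ $\mu$-a.e.\ in $X$, placing the convex decomposition $z = \mu y + (1-\mu) v_2$ squarely in the domain of \cref{ass:AuxProb}\ref{ass:AuxProb:iv}, which yields
\[
\Phi(z) \geq \mu \, \Phi(y) + (1-\mu) \, \Phi(v_2).
\]
Using the monotonicity of $S$ in its first argument, the concavity of $S(\cdot, u)$ supplied by \cref{ass:AuxProb}\ref{ass:AuxProb:iii}, the nonnegativity of $S$, and the fixed-point identity $z = S(\Phi(z), u)$, we deduce
\[
z \,=\, S(\Phi(z), u) \,\geq\, S\bigl(\mu \Phi(y) + (1-\mu) \Phi(v_2), u\bigr) \,\geq\, \mu\, S(\Phi(y), u) + (1-\mu)\, S(\Phi(v_2), u) \,\geq\, \mu y,
\]
where the last step uses $S(\Phi(v_2), u) \geq 0$ and $S(\Phi(y), u) = y$. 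Since $\mu = t'$, this is precisely the claim.

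Starting from $t_0 := 0$ (for which $z \geq t_0\, y$ holds trivially) and applying the bootstrap inductively gives the recursion $t_{n+1} := (\varepsilon + t_n K)/(K+\varepsilon)$. A direct computation yields $1 - t_{n+1} = \bigl(K/(K+\varepsilon)\bigr)\,(1 - t_n)$, so $t_n \uparrow 1$ geometrically fast, and the induction produces $z \geq t_n y$ $\mu$-a.e.\ in $X$ for all $n$. Passage to the limit gives $z \geq y$, which together with $z \leq y$ forces $m(u) = z = y = M(u)$, proving both the unique solvability of \eqref{eq:FP} and $m \equiv M$ on $U$.

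The main subtlety I anticipate is the verification of the pointwise bound $v_2 \geq -\varepsilon$: this is where both the extra slack $\varepsilon > 0$ in \cref{ass:AuxProb}\ref{ass:AuxProb:iv} (without which $t' = t$ at $t = 0$ and the bootstrap never lifts off) and the uniform $L^\infty$-regularity of solutions provided by \cref{ass:AuxProb}\ref{ass:AuxProb:iii} (without which the admissible $\mu$ would have to shrink pointwise with $y$ and the geometric convergence $t_n \uparrow 1$ would no longer be available) play essential roles.
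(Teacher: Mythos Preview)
Your argument is correct and rests on the same Laetsch-type mechanism as the paper's proof: the $\varepsilon$-extended concavity of $\Phi$ together with the concavity and nonnegativity of $S$ lets one lift a lower bound $z \ge t\,y$ to $z \ge t'\,y$ with $t' > t$. The paper organizes this as a contradiction argument via the supremum $\alpha := \sup\{\gamma \mid \gamma y_1 \le y_2\}$ (showing that $\alpha < 1$ is impossible), whereas you run the same step as an explicit geometric iteration $t_n \uparrow 1$. Both are equally valid; the supremum formulation is slightly slicker, while your version makes the rate of improvement explicit and avoids needing the order property in \cref{ass:AuxProb}\ref{ass:AuxProb:i} (you drop the unwanted term after applying $S$ via nonnegativity, rather than before via the order on $P$).

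Two minor points worth cleaning up. First, you overload the symbol $\mu$: immediately after setting $\mu := t'$ you write ``$v_2 \ge -\varepsilon$ $\mu$-a.e.'', where $\mu$ now denotes the measure again. Choose a different letter for the convex coefficient. Second, the bootstrap step requires dividing by $1 - t'$, which vanishes when $K = 0$; in that degenerate case $y = M(u) = 0$ and $0 \le m(u) \le M(u) = 0$ settles the matter directly, so just dispose of it in one sentence before entering the iteration.
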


\begin{proof}
For all $\smash{v \in L^{[2,\infty]}_+(X)}$ and all $\alpha \in [0, 1)$, 
we can find a $\beta \in (\alpha, 1)$ such that 
$- \varepsilon \leq   v\, (\alpha - \beta)/(1 - \beta) \leq 0$ holds
for the $\varepsilon > 0$ in \cref{ass:AuxProb}\ref{ass:AuxProb:iv}.
Due to the mapping properties of $\Phi$ and our assumptions on $P$ and its partial order, this implies
\[
\Phi(\alpha v )
=
\Phi \left ( 
\beta v + (1 - \beta) \left (\frac{\alpha - \beta}{1 - \beta} \right ) v
\right )
\geq 
\beta \Phi \left ( v\right )
+
 (1 - \beta)  \Phi \left ( \left (\frac{\alpha - \beta}{1 - \beta} \right ) v \right ) 
\geq 
\beta \Phi \left ( v\right )
.
\]
We may thus conclude that, for 
all $\smash{v \in L^{[2,\infty]}_+(X)}$ and all $\alpha \in [0, 1)$, there exists a $\beta \in (\alpha, 1)$
satisfying $\Phi(\alpha v ) \geq \beta \Phi( v )$. To prove the uniqueness of solutions of \eqref{eq:FP},
we can now use an argument of \cite{Laetsch1975}.  
Suppose that $u \in U$ is fixed and that \eqref{eq:FP} possesses 
two solutions, i.e., that there exist $y_1,y_2$ with $S(\Phi(y_1), u) = y_1 \neq y_2 = S(\Phi(y_2), u)$.
Then it holds $y_1, y_2 \in \smash{L^{[2,\infty]}_+(X)}$ by  the properties of $S$
and we may assume w.l.o.g.\ that $y_1 \not \leq y_2$. 
In this situation, the number $\alpha := \sup\{\gamma \in \R \mid \gamma y_1 \leq y_2\}$
has to satisfy $\alpha \in [0, 1)$ and we may choose 
a $\beta \in (\alpha, 1)$ with $\Phi(\alpha y_1 ) \geq \beta \Phi( y_1 )$ to obtain 
\begin{equation*}
\begin{aligned}
y_2= S(\Phi(y_2), u) 
\geq 
S(\Phi(\alpha y_1), u) 
\geq  
S(\beta \Phi( y_1), u)
& \geq 
\beta S( \Phi( y_1), u) + (1 - \beta) S(0, u) 
\\
&\geq
\beta S( \Phi( y_1), u) 
=
\beta y_1. 
\end{aligned}
\end{equation*}
Here, we have used the monotonicity properties of $S$ and $\Phi$ 
and the concavity and nonnegativity of $S$. 
The above implies $\beta y_1 \leq y_2$ and, by the properties of $\alpha$ and $\beta$, 
$\beta \leq \alpha < \beta$ which is impossible. 
Solutions of \eqref{eq:FP} are thus indeed unique. 
As the solvability of \eqref{eq:FP} has been established in \cref{th:solvability},
this completes the proof. 
\end{proof}

We remark that \eqref{eq:FP} can possess multiple solutions if \cref{ass:AuxProb}\ref{ass:AuxProb:iv}
only holds with $\varepsilon = 0$, see the example in \cref{subsec:6.1}.
Next, we prove an auxiliary result on the properties 
of the composition $\Psi(v, u) := S(\Phi(v), u)$.

\begin{lemma}[pointwise concavity and directional differentiability of $\Psi$]%
\label{lem:concdirdifPsi}%
Suppose that \cref{ass:AuxProb} holds.
Define 
$\Psi\colon L^2(X) \times  U \to L^2_+(X)$, $\Psi(v, u) := S(\Phi(v), u)$.
Then,
for all $\lambda \in [0,1]$, all $u_1, u_2 \in U$, 
and all $v_1, v_2 \in L^2(X)$ which satisfy  $v_1 \geq -\varepsilon$ and $v_2 \geq -\varepsilon$,
where the constant $\varepsilon > 0$ is given by \cref{ass:AuxProb}\ref{ass:AuxProb:iv}, it holds
\[
\lambda \Psi( v_1, u_1)
+
(1 - \lambda) \Psi(  v_2,  u_2)
\leq
\Psi(\lambda v_1 + (1 - \lambda) v_2, \lambda u_1 + (1 - \lambda) u_2). 
\]
Further, for all tuples $(v, u) \in L^2_+(X) \times U$ and 
$(h_1, h_2) \in \smash{L^{[2,\infty]}(X)} \times \R^+(U - u)$ satisfying 
\mbox{$u + \tau_0 h_2  \in U$}
for a $\tau_0 > 0$, there exists a unique element 
$\Psi'((v,u);(h_1, h_2))$ of the set $L^0(X,(-\infty, \infty])$
such that the difference quotients
\begin{equation}
\label{eq:Psidiffquots}
\frac{\Psi(v + \tau h_1, u + \tau h_2) - \Psi(v, u)}{\tau} \in L^2(X),\qquad \tau \in (0, \tau_0),
\end{equation}
converge $\mu$-a.e.\ to $\Psi'((v,u);(h_1, h_2))$ along every 
sequence $\{\tau_n\} \subset (0, \tau_0)$ with $\tau_n \to 0$. 
\end{lemma}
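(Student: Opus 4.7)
The plan is to handle the two claims in sequence. For the concavity statement, the approach is to unfold the definition $\Psi(v,u) = S(\Phi(v), u)$ and chain three inequalities: the joint concavity of $S$ from \cref{ass:AuxProb}\ref{ass:AuxProb:iii}, the monotonicity of $S$ in its first argument (standing assumption from \cref{subsec:2.1}), and the $\varepsilon$-restricted concavity of $\Phi$ from \cref{ass:AuxProb}\ref{ass:AuxProb:iv}. Concretely, for admissible $v_1, v_2, u_1, u_2$ and $\lambda \in [0,1]$, I would compute
\begin{align*}
\lambda \Psi(v_1, u_1) + (1-\lambda)\Psi(v_2, u_2)
&= \lambda S(\Phi(v_1), u_1) + (1-\lambda)S(\Phi(v_2), u_2) \\
&\leq S\bigl(\lambda \Phi(v_1) + (1-\lambda)\Phi(v_2),\, \lambda u_1 + (1-\lambda)u_2\bigr) \\
&\leq S\bigl(\Phi(\lambda v_1 + (1-\lambda)v_2),\, \lambda u_1 + (1-\lambda)u_2\bigr),
\end{align*}
and identify the last expression with $\Psi(\lambda v_1 + (1-\lambda)v_2, \lambda u_1 + (1-\lambda)u_2)$. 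The second $\leq$ uses the monotonicity of $S$ in its first argument together with \cref{ass:AuxProb}\ref{ass:AuxProb:iv}, which is applicable precisely because $v_1, v_2 \geq -\varepsilon$.

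For the directional differentiability claim, the strategy is to mimic the proof of \cref{th:monononeM}. First, exploit that $h_1 \in L^{[2,\infty]}(X) \subset L^\infty(X)$ and $v \in L^2_+(X)$ to pick a $\tau_0'' \in (0, \tau_0]$ such that $v + \tau h_1 \geq -\varepsilon$ holds for every $\tau \in [0, \tau_0'']$; convexity of $U$ together with $u + \tau_0 h_2 \in U$ also yields $u + \tau h_2 \in U$ on this range. For $0 < \tau_2 < \tau_1 < \tau_0''$, the decomposition
\[
v + \tau_2 h_1 = \left(1 - \tfrac{\tau_2}{\tau_1}\right) v + \tfrac{\tau_2}{\tau_1}(v + \tau_1 h_1), \qquad u + \tau_2 h_2 = \left(1 - \tfrac{\tau_2}{\tau_1}\right) u + \tfrac{\tau_2}{\tau_1}(u + \tau_1 h_2),
\]
the first part of the lemma (whose hypotheses are satisfied since $v \geq 0 \geq -\varepsilon$ and $v + \tau_1 h_1 \geq -\varepsilon$), and a rearrangement will give the monotonicity $\delta_{\tau_2} \geq \delta_{\tau_1}$ $\mu$-a.e.\ for the difference quotients in \eqref{eq:Psidiffquots}. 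Defining $\Psi'((v,u);(h_1,h_2))(x) := \sup_{n \geq N} \delta_{1/n}(x)$ for a fixed $N$ with $1/N < \tau_0''$, the monotone convergence along $\tau = 1/n \to 0$ is immediate, and the sandwich argument at the end of the proof of \cref{th:monononeM} extends this to convergence along any sequence $\tau_k \to 0^+$ in $(0,\tau_0)$; uniqueness is trivial.

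The principal obstacle is precisely the $\varepsilon$-restriction in \cref{ass:AuxProb}\ref{ass:AuxProb:iv}: the concavity of $\Phi$ holds only on $\{v \geq -\varepsilon\}$ and it has to be invoked both at the end of the chain in the first part and, with the choice $v_1 = v$, $v_2 = v + \tau_1 h_1$, to secure the monotonicity of the $\delta_\tau$. Keeping the second argument inside the admissible region is exactly what forces $h_1 \in L^{[2,\infty]}(X)$ rather than merely $h_1 \in L^2(X)$; without an $L^\infty$-bound on $h_1$, no uniform $\tau_0'' > 0$ would make $v + \tau h_1 \geq -\varepsilon$ hold $\mu$-a.e.\ in $X$, and the monotonicity-based construction of $\Psi'$ would collapse.
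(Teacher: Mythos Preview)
Your proposal is correct and follows essentially the same approach as the paper: the concavity part chains the concavity of $S$, the monotonicity of $S$ in its first argument, and the $\varepsilon$-restricted concavity of $\Phi$ (the paper just writes the same chain in the reverse order), and the directional differentiability part reduces to the monotone-difference-quotient argument of \cref{th:monononeM} after using $h_1 \in L^\infty(X)$ to ensure $v + \tau h_1 \ge -\varepsilon$ for small $\tau$. The only cosmetic difference is that the paper writes the explicit threshold $\varepsilon/\|h_1\|_{L^\infty(X)}$ where you introduce an abstract $\tau_0''$.
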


\begin{proof}
If $\lambda \in [0,1]$,  $u_1, u_2 \in U$, and $v_1, v_2 \in L^2(X)$ as in the first part of the lemma are given, then
it follows from
our assumptions on $S$ and $\Phi$ that
\begin{align*}
\Psi(\lambda v_1 + (1 - \lambda) v_2, \lambda u_1 + (1 - \lambda) u_2)
&=
S(\Phi(\lambda v_1 + (1 - \lambda) v_2), \lambda u_1 + (1 - \lambda) u_2)
\\
&\geq
S(\lambda \Phi(v_1) + (1 - \lambda) \Phi(v_2), \lambda u_1 + (1 - \lambda) u_2)
\\
&\geq
\lambda S(\Phi(v_1), u_1 )
+
(1 - \lambda) S(\Phi(v_2),  u_2)
\\
&=
\lambda \Psi(v_1, u_1 )
+
(1 - \lambda) \Psi(v_2,  u_2). 
\end{align*}
This proves the concavity of $\Psi$ on $\{v \in L^2(X) \mid v \geq -\varepsilon\} \times U$. 
To prove the pointwise $\mu$-a.e.\ directional differentiability of $\Psi$,
we can proceed  analogously to \cref{th:monononeM}.
Indeed, if  tuples $(v, u) \in L^2_+(X) \times U$ and 
$(h_1, h_2) \in \smash{L^{[2,\infty]}(X)} \times \R^+(U - u)$ as in the second part of the lemma are given,
then the nonnegativity of $v$ and the $L^\infty(X)$-regularity of $h_1$ imply 
that $v + \tau h_1 \geq - \varepsilon$ holds for all $0 < \tau < \varepsilon/ \|h_1\|_{L^\infty(X)}$.
Due to the concavity of $\Psi$ on $\{v \in L^2(X) \mid v \geq -\varepsilon\} \times U$,
this yields that 
the difference quotients in \eqref{eq:Psidiffquots}
are nonincreasing 
w.r.t.\ $\tau \in (0, \min(\tau_0, \varepsilon/ \|h_1\|_{L^\infty(X)} ))$,
cf.\ \eqref{eq:randomeq182663}.
Using exactly the same arguments as in the proof of \cref{th:monononeM},
the existence of a 
$\Psi'((v,u);(h_1, h_2))$ with the desired 
properties now follows immediately.
\end{proof}

We are now in the position to prove the main result of this section.

\begin{theorem}[unique characterization of directional derivatives]%
\label{th:auxQVI}%
Suppose that \cref{ass:AuxProb} holds and that $r \in [1,2]$ is an exponent satisfying
$S(p, u) \in L^r(X)$ for all $p \in P$, $u \in U$.
Denote
the unique solution of \eqref{eq:FP} with parameter $u \in U$ 
by $\mathbb{S}(u)$
and 
let $\Psi\colon L^2(X) \times  U \to L^2_+(X)$ be defined as in \cref{lem:concdirdifPsi}. 
Then the function $\mathbb{S}\colon U \to \smash{L_+^{[r,\infty]}(X)}$ is Hadamard directionally differentiable 
on the set $U \cap L^\infty_{\oplus}(Y)$
in the sense that the assertion of \cref{cor:Hadamard} holds for $\mathbb{S}$ with the numbers $r$ and $s = \infty$.
Further, the directional derivative $\delta := \mathbb{S}'(u; h) \in  \smash{L^{[r,\infty]}(X)}$ 
at a point $u \in U \cap L^\infty_{\oplus}(Y)$ 
in a direction $h \in \R^+(U - u)$ satisfying $u + \tau_0 h \in U$
for a $\tau_0 > 0$ is
characterized by the condition that it is the (necessarily unique) smallest element of the set
\[
			\AA
			:=
			\left \{
				\zeta \in \smash{L^{[r,\infty]}(X)}
				\left |\, 
				\begin{aligned}
					&\exists  \{\zeta_n\} \subset \smash{L^{[r,\infty]}(X)}, \{\tau_n\} \subset (0, \tau_0) \colon
					\\
					 &\zeta_n \leq \zeta_{n+1} \text{ and } \tau_{n+1} \leq \tau_n \text{ for all } n, 
					 \\
					& \tau_n \to 0\text{ for } n \to \infty,\;\zeta_n  \to \zeta \text{ in } L^q(X) 
					\text{ for all } q \in [r, \infty),
					\\
					 &\mathbb{S}(u) +\tau_n\zeta_n \text{ is supersol.\ of \eqref{eq:FP} with par.\ } u + \tau_n h
					\text{ for all }n,
					\\
					 &\Psi'((\mathbb{S}(u),u);(\zeta_n, h)) - \zeta_n \to 0~\mu\text{-a.e.\ in }X \text{ for }n \to \infty
				\end{aligned}
				\right.
			\right \}
\]
w.r.t.\ the pointwise $\mu$-a.e.\ partial order on $X$. This set $\AA$ is a superset of the set
\[
\BB :=  \big \{ \zeta \in L^{[r, \infty]}(X)~\big |~\zeta = \Psi'((\mathbb{S}(u),u);(\zeta, h))~\mu\text{-a.e.\ in }X \big \},
\]
and, if $\Psi$ is semicontinuous in the sense that,
for every $w \in U$, we have
\begin{equation}
\label{eq:pointwise_lsc}
\begin{gathered}
\{z_n\} \text{ bounded in }L^{[r, \infty]}(X), 
~~z_n \leq z_{n+1}~\forall n,~~z_n \to z \text{ in }L^q(X)~\forall q \in [r, \infty)
\\
 \Rightarrow  \limsup_{n \to \infty} \Psi(z_{n}, w) \geq \Psi(z, w)~\mu\text{-a.e.\ in }X,
\end{gathered}
\end{equation}
then it holds $\AA = \BB$ and $\delta = \mathbb{S}'(u; h)$
is the smallest element of the set $\BB$.
\end{theorem}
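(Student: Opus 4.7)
The plan is to decompose the statement into four sub-claims and treat them in order. Hadamard directional differentiability of $\mathbb{S}\colon U \to L_+^{[r,\infty]}(X)$ is immediate from \cref{cor:Hadamard}: \cref{ass:AuxProb} implies \cref{ass:Lipschitz,ass:DirDiff}, \cref{prop:soluniqueness} identifies $m \equiv M \equiv \mathbb{S}$, and \cref{ass:AuxProb}\ref{ass:AuxProb:iii} forces $S(p,u) \in L^{[r,\infty]}(X)$, so that \cref{cor:Hadamard} is applicable with these $r$ and $s=\infty$. The rest of the argument concerns $\delta := \mathbb{S}'(u;h)$; I must show that $\delta \in \AA$, that $\delta$ is a lower bound of $\AA$, that $\BB \subset \AA$ unconditionally, and that $\AA \subset \BB$ under the semicontinuity hypothesis \eqref{eq:pointwise_lsc}.

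The main obstacle is showing $\delta \in \AA$. I would pick $\tau_n := \tau_0/2^n$ and set $\delta_n := (\mathbb{S}(u+\tau_n h) - \mathbb{S}(u))/\tau_n$. The monotonicity $\delta_n \leq \delta_{n+1}$ and the $L^q(X)$-convergence $\delta_n \to \delta$ for $q \in [r,\infty)$ come directly from \cref{th:monononeM,cor:Hadamard}; the supersolution condition holds trivially because $\mathbb{S}(u) + \tau_n \delta_n = \mathbb{S}(u+\tau_n h)$ solves \eqref{eq:FP}; and \cref{thm:solutions_lipschitz} supplies a uniform $L^\infty(X)$-bound on $\delta_n$, so that \cref{lem:concdirdifPsi} is applicable with direction $\delta_n$. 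The hard part is the requirement that $\Psi'((\mathbb{S}(u),u);(\delta_n,h)) - \delta_n \to 0$ $\mu$-a.e.\ in $X$, which I would establish by a two-sided sandwich. Concavity of $\sigma \mapsto \Psi(\mathbb{S}(u)+\sigma\delta_n,u+\sigma h)$ from \cref{lem:concdirdifPsi}, combined with the fixed-point identity $\mathbb{S}(u)+\tau_n\delta_n = \Psi(\mathbb{S}(u)+\tau_n\delta_n,u+\tau_n h)$, makes the scalar difference quotient $(\Psi(\mathbb{S}(u)+\sigma\delta_n,u+\sigma h) - \Psi(\mathbb{S}(u),u))/\sigma$ nonincreasing in $\sigma$ with value $\delta_n$ at $\sigma = \tau_n$; letting $\sigma\to 0^+$ thus yields $\delta_n \leq \Psi'((\mathbb{S}(u),u);(\delta_n,h))$. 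For the upper bound, I evaluate the same difference quotient at $\sigma=\tau_m$ with $m > n$, exploit the monotonicity of $\Psi$ in its first argument together with $\delta_n \leq \delta_m$ to dominate it by the analogous quotient with direction $\delta_m$, which equals $\delta_m$ by the fixed-point identity, and then send $m\to\infty$ to obtain $\Psi'((\mathbb{S}(u),u);(\delta_n,h)) \leq \delta$ $\mu$-a.e. The sandwich $\delta_n \leq \Psi'((\mathbb{S}(u),u);(\delta_n,h)) \leq \delta$ combined with $\delta_n \to \delta$ $\mu$-a.e.\ then closes the verification of condition~5.

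The remaining claims are shorter. To see that $\delta$ is a lower bound of $\AA$, I pick $\zeta \in \AA$ with associated $(\zeta_n,\tau_n)$, apply $m \equiv \mathbb{S}$ and \cref{th:solvability} to the supersolution $\mathbb{S}(u) + \tau_n \zeta_n$ to obtain $\mathbb{S}(u+\tau_n h) \leq \mathbb{S}(u) + \tau_n \zeta_n$, divide by $\tau_n$, and pass to the limit along an a.e.-convergent subsequence using \cref{cor:Hadamard} together with the $L^q$-convergence $\zeta_n \to \zeta$. For $\BB \subset \AA$, the constant choice $\zeta_n \equiv \zeta$ does the job: for $\tau_n$ small enough that $\mathbb{S}(u)+\tau_n\zeta \geq -\varepsilon$, concavity of $\Psi$ and $\zeta = \Psi'((\mathbb{S}(u),u);(\zeta,h))$ give $\Psi(\mathbb{S}(u)+\tau_n\zeta,u+\tau_n h) - \mathbb{S}(u) \leq \tau_n\zeta$, which is exactly the supersolution requirement, and condition~5 is satisfied by construction. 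Finally, for $\AA \subset \BB$ under \eqref{eq:pointwise_lsc}, the monotonicity of $\{\zeta_n\}$ together with $L^q$-convergence forces $\zeta_n \nearrow \zeta$ $\mu$-a.e.\ and $|\zeta_n| \leq \max(|\zeta_1|,|\zeta|) \in L^{[r,\infty]}(X)$; for fixed small $\sigma>0$, applying \eqref{eq:pointwise_lsc} to $z_n := \mathbb{S}(u)+\sigma\zeta_n$ and combining with the upper bound coming from monotonicity of $\Psi$ in its first argument yields $\Psi(\mathbb{S}(u)+\sigma\zeta_n,u+\sigma h) \to \Psi(\mathbb{S}(u)+\sigma\zeta,u+\sigma h)$ $\mu$-a.e. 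Each term on the left is dominated by $\Psi'((\mathbb{S}(u),u);(\zeta_n,h))$, and condition~5 together with $\zeta_n \to \zeta$ yields $\Psi'((\mathbb{S}(u),u);(\zeta_n,h)) \to \zeta$ $\mu$-a.e.; passing to the limit first in $n$ and then in $\sigma \to 0^+$ therefore produces $\Psi'((\mathbb{S}(u),u);(\zeta,h)) \leq \zeta$, while the reverse inequality is immediate from the monotonicity of $\Psi'((\mathbb{S}(u),u);(\,\cdot\,,h))$ (which follows from that of $\Psi$) and condition~5, so $\zeta \in \BB$ as required.
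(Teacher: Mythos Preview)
Your proposal is correct and follows essentially the same route as the paper's proof: the same four-part decomposition, the same sandwich $\delta_n \le \Psi'((\mathbb{S}(u),u);(\delta_n,h)) \le \delta$ obtained from concavity and the fixed-point identity, the same supersolution argument for minimality, the constant sequence for $\BB \subset \AA$, and the same two-inequality argument (monotonicity of $\Psi'$ in the direction for one side, semicontinuity combined with the concavity majorization for the other) to obtain $\AA \subset \BB$. The only cosmetic differences are your explicit choice $\tau_n = \tau_0/2^n$ and the use of a generic parameter $\sigma$ in the final step where the paper uses the sequence $\tau_l$.
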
\pagebreak

\begin{proof}
The unique solvability of \eqref{eq:FP} has been proved in \cref{prop:soluniqueness},
the $\smash{L_+^{[r,\infty]}(X)}$-regularity of $\mathbb{S}(u)$ for all $u \in U$
follows from \eqref{eq:FP} and our assumptions on $S$,
and the asserted Hadamard directional differentiability of
$\mathbb{S}$ on $U \cap L^\infty_\oplus(Y)$ 
and the $\smash{L^{[r,\infty]}(X)}$-regularity of the directional derivatives 
follow immediately from \cref{cor:Hadamard} and the identity $\mathbb{S} \equiv M$. 
It remains to prove the characterization result. 
To this end, let us assume that a $u \in U \cap L^\infty_\oplus(Y)$, 
an $h \in \R^+(U - u)$ satisfying $u + \tau_0 h \in U$ for some $\tau_0 > 0$,
and an arbitrary but fixed sequence $\{\tau_n\} \subset (0, \tau_0)$ satisfying $\tau_n \to 0$
and $\tau_{n+1} \leq \tau_n$ for all $n$
are given and define
\begin{equation}
\label{eq:delta_n_def}
\delta :=\mathbb{S}'(u; h) \in \smash{L^{[r,\infty]}(X)},\quad
\delta_n := \frac{\mathbb{S}(u + \tau_n h) - \mathbb{S}(u)}{\tau_n} \in \smash{L^{[r,\infty]}(X)}
\quad \forall n \in \mathbb{N}. 
\end{equation}
We first show that $\delta \in \AA$. 
Since $\{\tau_n\}$ is nonincreasing, since $\mathbb{S} \equiv M$ holds, and 
since \cref{cor:Hadamard} can be applied with $r$ and $s=\infty$,
we obtain from \eqref{eq:randomeq182663} that $\delta_{n} \leq \delta_{n+1}$ holds for all $n$
and from \eqref{eq:Lqdirdiff} that $\delta_n \to \delta$ holds in $L^q(X)$ for all $q \in [r,\infty)$. 
Due to the definition of 
$\delta_n$, we further have $\mathbb{S}(u) + \tau_n \delta_n = \mathbb{S}(u + \tau_n h)$
so that $\mathbb{S}(u) + \tau_n \delta_n$ is a supersolution of \eqref{eq:FP}
with parameter $u + \tau_n h$. This proves that
$\delta$, $\{\delta_n\}$, and $\{\tau_n\}$ satisfy all of the conditions in $\AA$
except for the last one. To get this last condition, we note that the concavity of
$\Psi$ on $\{v \in L^2(X) \mid v \geq -\varepsilon\} \times U$,
the nonnegativity of $\mathbb{S}(u)$, the fact that  
$\{\delta_n\}$ is bounded in  $\smash{L^{[2,\infty]}(X)}$ by \cref{thm:solutions_lipschitz},
and \eqref{eq:delta_n_def} yield that
\begin{equation}
\label{eq:randomeq263535-11}
\begin{aligned}
\delta_n 
&=
\frac{\mathbb{S}(u + \tau_n h) - \mathbb{S}(u)}{\tau_n}
=\frac{\Psi(\mathbb{S}(u + \tau_n h), u + \tau_n h) - \Psi(\mathbb{S}(u ), u ) }{\tau_n }
\\
&=
\frac{\Psi(\mathbb{S}(u) + \tau_n \delta_n, u + \tau_n h) - \Psi(\mathbb{S}(u ), u ) }{\tau_n }
\leq \Psi'((\mathbb{S}(u),u);(\delta_n, h))
\end{aligned}
\end{equation}
holds $\mu$-a.e.\ in $X$ for all large enough $n$. 
Here, we have again used that pointwise $\mu$-a.e.\ concavity implies 
that difference quotients are majorized by the directional derivatives that they
approximate, cf.\ \eqref{eq:randomeq182663}.
Since $\delta_n \leq \delta_k \leq \delta$ holds 
for all $n \leq k$ and since $\Psi$ is nondecreasing 
in its first argument by the properties of $S$ and $\Phi$, we further obtain
\begin{equation}
\label{eq:randomeq263535-22}
\begin{aligned}
\delta  \geq 
 \delta_k 
 &= 
\frac{\Psi(\mathbb{S}(u) + \tau_k \delta_k, u + \tau_k h) - \Psi(\mathbb{S}(u ), u ) }{\tau_k }
\\
&\geq 
\frac{\Psi(\mathbb{S}(u) + \tau_k \delta_n, u + \tau_k h) - \Psi(\mathbb{S}(u ), u ) }{\tau_k }
\to \Psi'((\mathbb{S}(u),u);(\delta_n, h))
\end{aligned}
\end{equation}
for all $k \geq n$, where the limit is $\mu$-a.e.\ and for $k \to \infty$, see \cref{lem:concdirdifPsi}. 
From \eqref{eq:randomeq263535-11}, \eqref{eq:randomeq263535-22}, and \cref{th:monononeM},
we obtain that
$
0 \leq \Psi'((\mathbb{S}(u),u);(\delta_n, h))  - \delta_n  
\le
\delta - \delta_n \to 0
$
holds $\mu$-a.e.\ in $X$ for $n \to \infty$.
This shows that $\delta$ is indeed an element of $\AA$.

Suppose now that $\zeta$ is an arbitrary element of $\AA$ with associated sequences $\{\zeta_n\}$
and $\{\tau_n\}$ and let $\delta_n$ and $\delta$ be defined as in \eqref{eq:delta_n_def}.
Then $\mathbb{S}(u) + \tau_n \zeta_n$ is a supersolution of \eqref{eq:FP} with parameter $u + \tau_n h$
and it holds 
$\mathbb{S}(u) + \tau_n \zeta_n \geq m(u + \tau_n h) = \mathbb{S}(u + \tau_n h) = \mathbb{S}(u) + \tau_n \delta_n$
for all $n$ by \cref{th:solvability,prop:soluniqueness}. Thus, $\delta_n \leq \zeta_n$ 
and, after passing to the limit, $\delta \leq \zeta$. This shows that $\delta$ 
is the smallest element of $\AA$. 

It remains to study the inclusions between $\AA$ and $\BB$. 
To this end, let us suppose that an element $\zeta$ of $\BB$ is given. 
Then the concavity of $\Psi$ on $\{v \in L^2(X) \mid v \geq -\varepsilon\} \times U$ 
and the nonnegativity of $\mathbb{S}(u)$
yield that, for all sufficiently small $\tau > 0$, we have 
\begin{equation}
\label{eq:randomeq27353544}
\zeta = \Psi'((\mathbb{S}(u),u);(\zeta, h)) 
\geq \frac{\Psi(\mathbb{S}(u) + \tau \zeta, u + \tau  h) - \Psi(\mathbb{S}(u), u)}{\tau }.
\end{equation}
Since 
\eqref{eq:randomeq27353544} 
can be recast as $\mathbb{S}(u) + \tau  \zeta \geq S(\Phi(\mathbb{S}(u) + \tau \zeta), u + \tau h)$
due to the identity $\mathbb{S}(u) = \Psi(\mathbb{S}(u), u) $,
it follows that $ \mathbb{S}(u) + \tau \zeta $ is a supersolution of \eqref{eq:FP}
with parameter
$u + \tau h \in U$ for all sufficiently small $\tau > 0$. 
By choosing a nonincreasing sequence $\{\tau_n\} \subset (0, \tau_0)$
that converges to zero and 
has a sufficiently small first element and by defining $\zeta_n := \zeta$,
it now follows immediately that $\zeta \in \AA$ and, thus, $\BB\subset \AA$. 

Let us now finally assume that \eqref{eq:pointwise_lsc} holds and that a $\zeta \in \AA$
with associated sequences $\{\zeta_n\}$ and $\{\tau_n\}$ is given. 
Then the monotonicity of $\{\zeta_n\}$, the convergence $\zeta_n \to \zeta$ in $L^q(X)$
for all $q \in [r, \infty)$, and the regularity $\zeta \in \smash{L^{[r, \infty]}(X)}$
yield that $\zeta_n \to \zeta$ holds $\mu$-a.e.\ in $X$, that $\zeta_1 \leq \zeta_n \leq \zeta$
holds for all $n$, and that $\{\zeta_n\}$ is bounded in $\smash{L^{[r, \infty]}(X)}$. 
In combination with \cref{lem:concdirdifPsi} and the fact that $\Psi$ is nondecreasing in its first argument  
by the properties of $S$ and $\Phi$, 
this allows us to deduce that 
\begin{equation}
\label{eq:randomeq263535-445}
\begin{aligned}
\Psi'((\mathbb{S}(u),u);(\zeta, h)) 
&=
\lim_{k \to \infty} \frac{\Psi(\mathbb{S}(u) + \tau_k \zeta, u + \tau_k h) - \Psi(\mathbb{S}(u ), u ) }{\tau_k }
\\
&\geq 
\lim_{k \to \infty} \frac{\Psi(\mathbb{S}(u) + \tau_k \zeta_n, u + \tau_k h) - \Psi(\mathbb{S}(u ), u ) }{\tau_k }
\\
&= \Psi'((\mathbb{S}(u),u);(\zeta_n, h))
\end{aligned}
\end{equation}
holds $\mu$-a.e.\ in $X$ for all $n \in \mathbb{N}$. 
Since 
the convergence $\Psi'((\mathbb{S}(u),u);(\zeta_n, h)) - \zeta_n \to 0$ \mbox{$\mu$-a.e.\ in $X$}
in the last condition of $\AA$ and the convergence 
$\zeta_n \to \zeta$ $\mu$-a.e.\ in $X$
imply that 
$\Psi'((\mathbb{S}(u),u);(\zeta_n, h)) \to \zeta$
holds $\mu$-a.e.\ in $X$, \eqref{eq:randomeq263535-445}
yields $\Psi'((\mathbb{S}(u),u);(\zeta, h))  \geq \zeta$ $\mu$-a.e.\ in $X$.
To show that we also have the reverse inequality, we note that 
the convergence $\Psi'((\mathbb{S}(u),u);(\zeta_n, h)) \to \zeta$ $\mu$-a.e.\ in $X$, 
the same majorization argument as in \eqref{eq:randomeq263535-11} and \eqref{eq:randomeq27353544},
the properties of $\{\zeta_n\}$,
and \eqref{eq:pointwise_lsc} imply that
\begin{equation}
\label{eq:randomeq263535-44}
\begin{aligned}
\zeta  
= 
\lim_{n \to \infty} \Psi'((\mathbb{S}(u),u);(\zeta_n, h)) 
&\geq
\limsup_{n \to \infty}
\frac{\Psi(\mathbb{S}(u) + \tau_l \zeta_n, u + \tau_l h) - \Psi(\mathbb{S}(u ), u )}{\tau_l}
\\
&\geq
\frac{\Psi(\mathbb{S}(u) + \tau_l \zeta, u + \tau_l h) - \Psi(\mathbb{S}(u ), u )}{\tau_l} 
\end{aligned}
\end{equation}
holds for all sufficiently large $l \in \mathbb{N}$. 
Here, all limits etc.\ have to be understood in the $\mu$-a.e.-sense. 
By letting $l$ go to infinity  in \eqref{eq:randomeq263535-44},
we obtain that
$\zeta \geq \Psi'((\mathbb{S}(u),u);(\zeta, h))$
and, thus, 
that $\zeta \in L^{[r, \infty]}(X)$, $\zeta = \Psi'((\mathbb{S}(u),u);(\zeta, h))$ $\mu$-a.e.\ in $X$,
and $\zeta \in \BB$. 
This shows that $\AA =\BB$ holds under condition
\eqref{eq:pointwise_lsc} and completes the proof.
\end{proof}

By our standing assumptions, we can always choose $r=2$ in \cref{th:auxQVI}, see \cref{subsec:2.1}.
If the measure space $(X, \Sigma, \mu)$ is finite, then the $L^\infty$-regularity of $S$ in \cref{ass:AuxProb}\ref{ass:AuxProb:iii}
and H{\"o}lder's inequality
imply that \cref{th:auxQVI} can be invoked with $r=1$. 
(In this case, we have $\smash{ L^{[r, \infty]}(X)} = L^\infty(X)$ for all $r \in [1,\infty]$.) 
Note that, due to the last condition in the definition of $\AA$
and the inclusion $\BB \subset \AA$,
the set $\AA$ can be interpreted as a set of generalized solutions of the QVI $\zeta = \Psi'((\mathbb{S}(u),u);(\zeta, h))$. 
Under the pointwise semicontinuity condition \eqref{eq:pointwise_lsc}, this set of ``limiting'' solutions coincides with 
the ordinary solution set $\BB$. Finally, we would like to point out that the monotonicity properties 
of $\Psi$ and $\{z_n\}$ yield that the right-hand side of \eqref{eq:pointwise_lsc} can be replaced by
``$\Psi(z_n,w) \to \Psi(z, w)$ $\mu$-a.e.\ in $X$'' without changing the strength of 
this requirement. The semicontinuity condition \eqref{eq:pointwise_lsc} is thus, in fact, 
an assumption on the pointwise $\mu$-a.e.\ continuity of $\Psi$ along certain sequences.


\section{Applications and examples}%
\label{sec:6}%

In this section, we illustrate by means of three examples that 
our Hadamard directional differentiability and Lipschitz continuity results 
can be applied to many interesting problems.
We begin with a counterexample which demonstrates that 
\cref{thm:solutions_lipschitz,prop:Mconcave,th:monononeM,cor:Hadamard}
also cover situations in which 
\eqref{eq:FP} possesses a continuum of solutions
and that the linearized fixed-point equations $\delta= \Psi'( (M(u), u); (\delta, h))$
may indeed fail to contain any form of useful information.


\subsection{One-dimensional example}%
\label{subsec:6.1}%
Define $X := Y := \{0\}$, $\Sigma := \Xi :=  \PP(X)$, and $\mu := \eta := \delta_0$,
where $\PP(X)$ denotes the power set of $X$ and $\delta_0$
the Dirac measure at zero. Then the spaces $(L^q(X), \|\cdot\|_{L^q(X)})$ and $(L^q(Y), \|\cdot\|_{L^q(Y)})$, $1 \leq q \leq \infty$,
can be identified with $(\R, |\cdot|)$ and the partial order on $L^q(X)$ and $L^q(Y)$ is just the usual 
one on $\R$. We further set $\bar P := [0, \infty]$ endowed with its canonical order,
$\bar p := \infty$, $P := [0,\infty)$, and $U := [0, \infty)$. It is easy to check that 
these choices satisfy all of the assumptions on $(X, \Sigma, \mu)$, $(Y, \Xi, \eta)$, $\bar P$, $P$, and $U$
in \cref{subsec:2.1} and \cref{ass:Lipschitz,ass:DirDiff,ass:AuxProb}.
As the map $S$, we consider the solution operator $S\colon [0, \infty] \times [0, \infty) \to [0, \infty)$, $(p, u) \mapsto y$,
of the one-dimensional obstacle-type variational inequality 
\[
y \in \R, \; y \leq p, \qquad y( v - y) \geq u(v - y)\quad \forall v \in \R, \; v \leq p,
\]
i.e.,
$S(p, u) = \min(p, u)$ for all $(p, u) \in [0, \infty] \times [0, \infty)$. 
The map $\Phi\colon \R \to [0, \infty)$ is chosen as an arbitrary but fixed real-valued function satisfying 
\begin{equation}
\label{eq:concaveex1d}
\Phi(s) = 0 \text{ for } s < 0,\qquad \Phi(s)  = s  \text{ for } s \in [0, 1],\qquad \Phi(s)  < s \text{ for } s > 1,
\end{equation}
which is nondecreasing on $\R$ and concave and continuously differentiable on $(0, \infty)$. 
(Such a $\Phi$ can be constructed easily.)
It is straightforward to verify that these $S$ and $\Phi$ 
satisfy all of the conditions in \cref{subsec:2.1} and \cref{ass:Lipschitz,ass:DirDiff,ass:AuxProb}
except for the concavity of $\Phi$ on the set $[-\varepsilon, \infty)$ for some 
$\varepsilon > 0$ in \cref{ass:AuxProb}\ref{ass:AuxProb:iv}.
Note that it is not possible to 
recover this strengthened concavity condition by redefining 
$\Phi$ on the set $(-\infty,0)$ since the required monotonicity and nonnegativity of $\Phi$
and the identity $\Phi(0) = 0$ already imply $\Phi(s) = 0$ for all $s \leq 0$.
Consider now the QVI
\begin{equation}
\label{eq:1dQVI}
y \in \R, \; y \leq \Phi(y), \qquad y( v - y) \geq u(v - y)\quad \forall v \in \R, \; v \leq \Phi(y).
\end{equation} 
Then the solution set $\mathbb{S}(u)$ of \eqref{eq:1dQVI} for a given $u \in U$ is identical 
to the set of solutions of the fixed-point equation $y = S(\Phi(y), u)$, which is precisely 
of the form \eqref{eq:FP}, and it is easy to check that 
$\mathbb{S}(u) = [0, \min(u, 1)]$ holds for all $u \in U$. 
In particular, $m(u) = 0$ and $M(u) = \min(u, 1)$ for all $u \in U$. 
Note that these explicit formulas show 
that the functions $m\colon U \to [0, \infty)$ and $M\colon U \to [0, \infty)$ 
are locally Lipschitz continuous on $(0, \infty)$ and that $M$ is concave and Hadamard 
directionally differentiable on $(0, \infty)$ as predicted by \cref{thm:solutions_lipschitz,prop:Mconcave,cor:Hadamard}.
As \eqref{eq:1dQVI} satisfies all of the 
assumptions in \cref{sec:2,sec:3,sec:4}, this demonstrates that 
our analysis indeed covers situations in which the 
considered QVI (or, more generally, fixed-point problem)
possesses a continuum of solutions
and the maximal and minimal element of the solution set are not isolated.
This is in contrast to the results of
\cite{Alphonse2019-1,Wachsmuth2020} which only apply to problems with locally unique solutions,
see \cite[Theorem~4.2]{Wachsmuth2020}.
The example \eqref{eq:1dQVI} also shows that the assumption of concavity of $\Phi$
on an interval of the form $[-\varepsilon, \infty)$ for some $\varepsilon > 0$
in \cref{ass:AuxProb}\ref{ass:AuxProb:iv} cannot be dropped 
in the uniqueness result of \cref{prop:soluniqueness}.
Next, we demonstrate that the same is true for the 
characterization of the directional derivatives 
of $M$ by means of the linearized 
auxiliary problems in \cref{th:auxQVI}. 
If we naively differentiate 
the fixed-point equation $y = \min(\Phi(y), u)$ associated with \eqref{eq:1dQVI} at some $y, u>0$,
then we arrive at the problem
\begin{equation}
\label{eq:candcharprob}
	\delta = \Psi'((y,u);(\delta, h)) = 
	\begin{cases}
		\Phi'(y) \, \delta& \text{for } \Phi(y) < u \\
		\min(\Phi'(y)\delta, h) & \text{for } \Phi(y) = u \\
		h & \text{for } \Phi(y) > u.
	\end{cases}
\end{equation}
Here,
$\Psi\colon \R \times  [0, \infty) \to [0, \infty)$ again denotes the composition $\Psi(v, u) := S(\Phi(v), u)$.
For all  $u \in (1, \infty)$, the identity \eqref{eq:candcharprob} simplifies for  
$y = M(u) = \min(u, 1) = 1$ to the degenerate equation
$\delta = \delta$
which does not contain any form of useful information. This shows that it can, in general,
not be expected that the directional derivatives of the maximal solution map $M$ 
of a fixed-point problem of the form \eqref{eq:FP} can be characterized 
by means of the equations $\delta= \Psi'( (M(u), u); (\delta, h))$
and that such a failure of the linearized auxiliary problems may occur even in situations in which 
the differentiability results in \cref{th:monononeM,cor:Hadamard} are applicable and 
the directional derivatives of $M$ exist. In particular,
the assumption that $\Phi$ is concave on a set slightly larger than $L^2_+(X)$
in \cref{ass:AuxProb}\ref{ass:AuxProb:iv} cannot be dropped for \cref{th:auxQVI} to be true. 
Note that the function $\Phi$ in \eqref{eq:concaveex1d} trivially satisfies \eqref{eq:pointwise_lsc}
so that the generalization of the solution set of the linearized auxiliary problem in  \cref{th:auxQVI} does not have 
any impact here.
Finally, we would like to point out that, by considering the continuous and
nondecreasing function $\Phi\colon \R \to [0, \infty)$ 
given by 
\[
\Phi(s) = 1 \text{ for } s < 1,\qquad \Phi(s)  = s  \text{ for } s \in [1, 2],\qquad \Phi(s)  = 2 \text{ for } s > 2,
\]
in \eqref{eq:1dQVI}
instead of a function with the properties in \eqref{eq:concaveex1d},
one can proceed along exactly the same lines as above to 
obtain an example of a problem
\eqref{eq:FP} that is covered by \cref{thm:solutions_lipschitz} 
and satisfies $\mathbb{S}(u) = [\min(u, 1),\min(u, 2)]$, $m(u) = \min(u, 1)$, and $M(u) = \min(u, 2)$ for all $u \in [0, \infty)$.
This shows that our Lipschitz stability result also covers situations
in which the minimal solution mapping is not constant zero (as in the case of \eqref{eq:concaveex1d})
and in which the estimates \eqref{eq:mLipschitzEstimate} and 
\eqref{eq:MLipschitzEstimate} are both nontrivial. 


\subsection{Application in impulse control}
\label{subsec:6.2}
Next, we consider the impulse control problem \eqref{eq:ImpVIintro}. 
Suppose that $\Omega \subset \R^d$, $d \in \mathbb{N}$, is a bounded open nonempty set, 
let $H_0^1(\Omega)$ and $H^{-1}(\Omega)$ be defined as usual (see \cite{Attouch2006}),
let $\left \langle \cdot, \cdot \right \rangle$ be the dual pairing between elements of $H^{-1}(\Omega)$
and $H_0^1(\Omega)$, let $\Delta\colon H_0^1(\Omega) \to H^{-1}(\Omega)$ denote the distributional 
Laplacian, let $\kappa \geq 0$ and $c_0 \in L_+^0(\R^d)$ be arbitrary but fixed,
and let $f\colon \R \to \R$ be a nondecreasing, globally Lipschitz continuous, convex function satisfying $f(0) = 0$. 
Given a
$u \in H^{-1}_+(\Omega) := 
\{z \in H^{-1}(\Omega) \mid \left \langle z, v\right \rangle \geq 0 \text{ for all } 0 \leq v \in H_0^1(\Omega)\}$, 
we are interested in 
the nonlinear elliptic quasi-variational inequality
\begin{equation}
\label{eq:ImpulseControlQVI}
\begin{aligned}
y \in H_0^1(\Omega), \quad~ 0 \leq y \leq \Theta(y),
\quad~ \left \langle -\Delta y + f(y) - u, v - y \right \rangle \geq 0~~\forall v \in H_0^1(\Omega), v \leq \Theta(y),
\end{aligned}
\end{equation}
where $\Theta(y)$ is the function defined by
\begin{equation}
\label{eq:ThetaDef}
\Theta(y)(x) := \kappa + \essinf_{0 \leq \xi \in \R^d,~x + \xi \in \Omega} c_0(\xi) + y(x + \xi) \quad \text{ for a.a.\ } x \in \Omega. 
\end{equation}
Here, the inequality $\xi\geq 0$ has to be understood componentwise, 
the inequalities $0 \leq v$, $0 \leq y \leq \Theta(y)$, and $v \leq \Theta(y)$ are meant to hold in the a.e.-sense,
and $f$ acts by superposition. 
Note that the above problem formulation is analogous to that considered in \cite[\mbox{section VIII-2}]{Bensoussan1982} 
(with more general $\kappa$, $c_0$, and $u$
and an additional nonlinearity~$f$).
Variations of the above setting (involving, e.g., Neumann boundary conditions)
can also be found in the literature, see  \cite{Bensoussan1975,Bensoussan1975-2,Lions1986,Perthame1984,Perthame1985}, 
and may be discussed with the same techniques that we use 
in the following. Before we start with the analysis of the solution mapping of \eqref{eq:ImpulseControlQVI},
we briefly check that the formula \eqref{eq:ThetaDef} is sensible.
To this end, we prove the measurability of a partial essential infimum.
\begin{lemma}
\label{lem:partial_essinf}
		Let $(X_1, \Sigma_1)$ be a measurable space
		and let
		$(X_2, \Sigma_2, \mu_2)$ be a $\sigma$-finite measure space.
		Assume that the function
		$F \colon X_1 \times X_2 \to [-\infty, \infty]$
		is measurable w.r.t.\ the product $\sigma$-algebra
		$\Sigma_1 \otimes \Sigma_2$.
		Then the function $G \colon X_1 \to [-\infty, \infty]$ defined via
		\[
			G(x_1)
			:=
			\essinf_{x_2 \in X_2} F(x_1, x_2)
			\qquad
			\forall x_1 \in X_1
		\]
		is measurable as well.
		The same assertion holds true if
		$(X_1, \Sigma_1, \mu_1)$ and
		$(X_2, \Sigma_2, \mu_2)$ are complete, $\sigma$-finite measure spaces
		and $F$ is measurable w.r.t.\ the completion
		of the product $\sigma$-algebra
		$\Sigma_1 \otimes \Sigma_2$
		w.r.t.\ the product measure
		$\mu_1 \otimes \mu_2$.
	\end{lemma}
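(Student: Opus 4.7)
The plan is to reduce the measurability of $G$ to the classical fact that a countable supremum of measurable functions is measurable. To set this up, for each $c \in \mathbb{Q}$ I would introduce the set
\[
A_c := \{x_1 \in X_1 \mid \mu_2(\{x_2 \in X_2 \mid F(x_1, x_2) < c\}) = 0\}.
\]
Since $F$ is $\Sigma_1 \otimes \Sigma_2$-measurable, the set $\{F < c\}$ lies in the product $\sigma$-algebra, and since $\mu_2$ is $\sigma$-finite, Tonelli's theorem applied to its indicator function implies that the map $x_1 \mapsto \mu_2(\{x_2 \mid F(x_1, x_2) < c\})$ is $\Sigma_1$-measurable. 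This yields $A_c \in \Sigma_1$ for every $c \in \mathbb{Q}$.

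The key step is then to establish the identity
\[
G(x_1) = \sup\{c \in \mathbb{Q} \mid x_1 \in A_c\} \qquad \forall x_1 \in X_1,
\]
with the usual conventions that the supremum of the empty set equals $-\infty$ and an unbounded supremum equals $+\infty$. This identity follows directly from the definition of the essential infimum and the density of $\mathbb{Q}$ in $\R$ by distinguishing the cases $G(x_1) \in \R$, $G(x_1) = +\infty$, and $G(x_1) = -\infty$. Once it is in place, writing $G$ as the countable pointwise supremum of the $\Sigma_1$-measurable functions $g_c \colon X_1 \to [-\infty, \infty]$ defined by $g_c := c$ on $A_c$ and $g_c := -\infty$ on its complement completes the proof of the first part.

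For the second assertion, I would rely on the standard fact that every function measurable with respect to the completion of $\Sigma_1 \otimes \Sigma_2$ under $\mu_1 \otimes \mu_2$ agrees $\mu_1 \otimes \mu_2$-a.e.\ with some $\Sigma_1 \otimes \Sigma_2$-measurable function $\tilde F$. Applying the first part to $\tilde F$ gives a $\Sigma_1$-measurable essential infimum $\tilde G$, and Fubini--Tonelli guarantees that for $\mu_1$-a.e.\ $x_1$ the slices $F(x_1, \cdot)$ and $\tilde F(x_1, \cdot)$ differ only on a $\mu_2$-null set, so that $G = \tilde G$ holds $\mu_1$-a.e. The completeness of $\Sigma_1$ then upgrades this to measurability of $G$ w.r.t.\ the completed $\sigma$-algebra. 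The only mildly delicate point I foresee is the bookkeeping of the values $\pm\infty$ in the sup characterization; beyond that, the argument is essentially a direct application of Tonelli's theorem together with a standard completion argument.
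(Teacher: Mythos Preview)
Your proof is correct and follows essentially the same approach as the paper: both apply Tonelli to the indicator of $\{F < c\}$ to obtain measurability of $x_1 \mapsto \mu_2(\{F(x_1,\cdot) < c\})$ and then use the characterization of the essential infimum via this function (the paper checks $G^{-1}([-\infty,c)) = \{h_c > 0\}$ directly, while you equivalently write $G$ as a countable supremum over $c \in \mathbb{Q}$). For the complete case the paper simply invokes the complete-measure version of Fubini rather than reducing to the first part via an a.e.-equivalent product-measurable $\tilde F$, but this is a minor variation.
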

	\begin{proof}
		For every $c \in \R$,
		we have
 		$M_c :=
 		F^{-1}([-\infty,c))
 		\in \Sigma_1 \otimes \Sigma_2$.
		Denote the indicator function of $M_c$ with $\chi_{M_c}$.
		Then the function $h_c \colon X_1 \to [0,\infty]$
		defined via
		\[
			h_c(x_1) :=
			\int_{X_2} \chi_{M_c}(x_1, x_2) \, \d\mu_2(x_2)
			=
			\mu_2(\set{x_2 \in X_2 \given (x_1, x_2) \in M_c})\qquad
			\forall x_1 \in X_1
		\]
		is measurable by
		\cite[Proposition~5.1.3]{Cohn2013} or \cite[Satz~V.1.3]{Elstrodt2011}.
		By definition, we have $G(x_1) < c$
		if and only if $h_c(x_1) > 0$.
		Hence, the set
		$G^{-1}([-\infty,c)) = h_c^{-1}( (0,\infty] )$
		belongs to $\Sigma_1$
		for all $c \in \R$
		and, thus,
		$G$ is measurable.
		In the complete case,
		we can argue similarly by using
		\cite[Exercise~5.2.6]{Cohn2013}
		or
		\cite[Satz~V.2.4]{Elstrodt2011}.
	\end{proof}

\begin{corollary}[well-definedness of $\Theta$]%
\label{lemma:ThetaSense}%
The formula \eqref{eq:ThetaDef} yields a well-defined operator
$\Theta\colon \{v \in L^2(\Omega) \mid v \geq - \kappa\} \to L^0_+(\Omega)$. 
\end{corollary}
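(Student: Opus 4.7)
I would write $\Theta(y)(x) = \kappa + G(x)$, where $G(x) := \essinf_{\xi \in \R^d} F(x, \xi)$ and $F\colon \Omega \times \R^d \to [-\infty, \infty]$ is the obstacle integrand extended by $+\infty$ outside the feasible region $\{(x,\xi) \mid \xi \geq 0,\, x + \xi \in \Omega\}$. Once joint measurability of $F$ is in hand, \cref{lem:partial_essinf} applied with $X_1 := \Omega$ and $X_2 := \R^d$ (both equipped with the $\sigma$-finite Lebesgue measure) delivers the measurability of $G$ and hence of $\Theta(y)$. The remaining tasks are therefore the joint measurability of $F$, a pointwise lower bound $G \geq -\kappa$, and a pointwise upper bound $G < +\infty$.

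\textbf{Joint measurability (the main obstacle).} This is the delicate step: $y$ and $c_0$ are only equivalence classes of Lebesgue measurable functions and, in general, the composition of a Lebesgue measurable function with a continuous map need not be jointly Lebesgue measurable. I would sidestep this by choosing a Borel representative $\tilde y\colon \R^d \to \R$ of $y$, extended by $0$ outside $\Omega$, together with a Borel representative $\tilde c_0\colon \R^d \to [0, \infty]$ of $c_0$; such representatives exist because every Lebesgue measurable function agrees a.e.\ with a Borel measurable one. Since the translation $T(x,\xi) := x+\xi$ is continuous, the composition $\tilde y \circ T$ is Borel on $\R^{2d}$; the map $(x,\xi) \mapsto \tilde c_0(\xi)$ is Borel by pullback under the projection; and the feasible region is Borel because $\Omega$ is open and $\R^d_+$ is closed. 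Hence $F$ is Borel measurable on $\Omega \times \R^d$, and the lemma gives measurability of $G$. Replacing $\tilde y$ or $\tilde c_0$ by another Borel representative perturbs $F(x,\cdot)$, for each $x$, only on a translate of a Lebesgue-null set and therefore leaves $G(x)$ unchanged, which ensures that $\Theta(y)$ is a genuine well-defined equivalence class in $L^0(\Omega)$.

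\textbf{Pointwise bounds and conclusion.} Fix $x \in \Omega$. The set $\{\xi \in \R^d \mid \tilde y(x + \xi) < -\kappa\}$ is a translate of a Lebesgue-null subset of $\R^d$ and hence null, while $\tilde c_0 \geq 0$ almost everywhere; combining these facts yields $F(x, \xi) \geq -\kappa$ for a.e.\ $\xi \in \R^d$, and therefore $G(x) \geq -\kappa$ for every $x \in \Omega$. On the other hand, because $\Omega$ is open and $x \in \Omega$, the feasible $\xi$-set $(\Omega - x) \cap \R^d_+$ has positive Lebesgue measure, and $F(x,\cdot)$ is real-valued almost everywhere there (since $\tilde y$ is real-valued and $\tilde c_0$ is finite a.e.); this forces $G(x) < +\infty$. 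Altogether, $\Theta(y) = \kappa + G$ is a real-valued, measurable, nonnegative function on $\Omega$, i.e., an element of $L^0_+(\Omega)$, which completes the plan.
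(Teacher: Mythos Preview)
Your proof is correct and follows essentially the same route as the paper: invoke \cref{lem:partial_essinf} for measurability and then check the pointwise bounds using $c_0 \ge 0$, $y \ge -\kappa$, and the openness of $\Omega$. The only difference is that the paper takes $X_2 = \R^d_+$ and applies the completed-product version of \cref{lem:partial_essinf} directly to Lebesgue representatives, whereas you pass to Borel representatives and use the first part of the lemma; your handling of the joint-measurability step is in fact more explicit than the paper's.
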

\begin{proof}
	Let $v \in L^2(\Omega)$ with $v \ge -\kappa$ be given.
	First of all, it is clear that
	$\Theta(v)$ does not depend on the representative of $v$.
	The measurability of $\Theta(v)$
	follows from \cref{lem:partial_essinf} applied to
	$X_1 := \Omega$,
	$X_2 := \set{\xi \in \R^d \given \xi \ge 0}$
	(both equipped with the Lebesgue $\sigma$-algebra
	and the Lebesgue measure),
	and
	\[
	F(x, \xi)
		:=
		\begin{cases}
			c_0(\xi) + v(x+\xi) & \text{if } x + \xi \in \Omega \\
			+\infty & \text{if } x + \xi \not\in \Omega
		\end{cases}
		\qquad\forall x \in \Omega, \xi \ge 0.
	\]
	From $c_0 \ge 0$ and $v \ge -\kappa$, we get $\Theta(v) \ge 0$.
	Finally, $\Theta(v) < \infty$ follows since $c_0$ and $v$ are almost everywhere finite and $\Omega$ is open.
\end{proof}

We remark that the map $\Theta$ may fail to preserve Sobolev regularity even if $c_0$ is smooth, cf.\
\cite[Remark 3-2.1]{Mosco1976}.
We omit discussing additional regularity properties of the 
functions $\Theta(v)$ because they are not needed for our analysis.
Next, we collect some classical results on the solution operator of the obstacle problem, cf.\ \cite{Rodrigues1987}.

\begin{lemma}[properties of the obstacle problem]%
\label{lemma:obstacleproperties}%
Let $H_0^1(\Omega)$ be endowed 
with the partial order $v  \geq w :\iff v \geq w~\text{a.e.\ in }\Omega$ 
(analogously to $L^q(\Omega)$, $q \in \{0\} \cup [1, \infty]$)
and 
let $H^{-1}(\Omega)$ be endowed with the partial order 
$v  \geq w :\iff \left \langle v- w, z \right \rangle \geq 0$ for all $0\leq z\in H_0^1(\Omega)$. 
Denote the function that is equal to infinity a.e.\ in $\Omega$
by $\infty$ and add it to $L^0_+(\Omega)$ as the largest element.  
Define $ U := H^{-1}_+(\Omega)$, $\bar P := L^0_+(\Omega) \cup \{\infty\}$, and $P := L^0_+(\Omega)$,
and consider for $u \in U$ and $p \in \bar P$ the problem
\begin{equation}
\label{eq:ObstacleProblem}
\begin{aligned}
y \in H_0^1(\Omega), ~~y \leq p,
\quad~~ \left \langle -\Delta y + f(y) - u, v - y \right \rangle \geq 0~~\forall v \in H_0^1(\Omega), v \leq p. 
\end{aligned}
\end{equation}
Then this nonlinear variational inequality has a unique solution $y = S(p, u) \in H_0^1(\Omega)$ for all 
$p \in \bar P$ and all $u \in U$ and the following is true: 
\begin{enumerate}
\item\label{obstacle:i}
It holds $S(p, u) \in L^2_+(\Omega)$ for all $p \in \bar P$, $u \in U$. 
\item\label{obstacle:ii} 
It holds $S(p_1, u_1) \leq S(p_2, u_2)$ for all $p_1, p_2 \in \bar P$, $u_1, u_2 \in U$, $p_1 \leq p_2$, $u_1 \leq u_2$.
\item\label{obstacle:iii}  
It holds $\lambda S(p_1, u_1) + (1 - \lambda)  S(p_2, u_2) \leq S(\lambda p_1 + (1 - \lambda)p_2, \lambda u_1 + (1 - \lambda)u_2)$ 
for all $p_1, p_2 \in P$, $u_1, u_2 \in  U$, $\lambda \in [0, 1]$.
\item\label{obstacle:iv}  
It holds $S(p, u) \in L^\infty_+(\Omega)$ for all $p \in \bar P$, $u \in L^{q}_+(\Omega)$, $q \in [1, \infty] \cap (d/2, \infty]$.
\end{enumerate}
\end{lemma}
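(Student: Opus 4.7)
My plan is to obtain existence, uniqueness, and the four properties by one standard variational setup followed by three lattice-truncation arguments. For existence and uniqueness, let $F(s) := \int_0^s f(t)\,\d t$, which is convex and nonnegative since $f$ is nondecreasing with $f(0)=0$. The functional
\begin{equation*}
	J_u(v) := \tfrac12\|\nabla v\|_{L^2(\Omega)}^2 + \int_\Omega F(v)\,\d x - \langle u, v\rangle
\end{equation*}
is strictly convex, coercive, and weakly lower semicontinuous on $H_0^1(\Omega)$, and the admissible set $K(p) := \{v \in H_0^1(\Omega) \mid v \leq p\}$ is closed, convex, and nonempty (it contains $0$ since $p \geq 0$; in the case $p = \infty$ it is all of $H_0^1(\Omega)$). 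Hence a unique minimizer $y = S(p, u)$ exists, and its Euler--Lagrange inequality is exactly \eqref{eq:ObstacleProblem}.

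I would prove \ref{obstacle:ii} next by a standard lattice comparison. Given $y_i := S(p_i, u_i)$ with $p_1 \leq p_2$ and $u_1 \leq u_2$, the functions $y_1 \wedge y_2$ and $y_1 \vee y_2$ lie in $K(p_1)$ and $K(p_2)$, respectively. Testing each VI with these functions, using $(y_1 \vee y_2) - y_2 = (y_1 - y_2)^+ = y_1 - (y_1 \wedge y_2)$, and adding the two inequalities yields
\begin{equation*}
	-\int_\Omega |\nabla (y_1 - y_2)^+|^2\,\d x + \int_\Omega \bigl(f(y_2) - f(y_1)\bigr)(y_1-y_2)^+\,\d x + \langle u_1 - u_2, (y_1-y_2)^+\rangle \geq 0,
\end{equation*}
where the three summands are nonpositive by the monotonicity of $f$ and the sign assumptions, forcing $(y_1 - y_2)^+ = 0$. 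Claim \ref{obstacle:i} is then immediate: $y \equiv 0$ solves the VI with $u = 0$ thanks to $f(0) = 0$, so $0 = S(p, 0) \leq S(p, u)$ for every $u \in H^{-1}_+(\Omega)$.

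The concavity claim \ref{obstacle:iii} is the main subtlety, and it is here that the convexity of $f$ enters crucially. Inserting $v = y_i + \phi$ with $\phi \in H_0^1(\Omega)$ and $\phi \leq 0$ into the VI yields the one-sided (subsolution) inequality $\langle -\Delta y_i + f(y_i) - u_i, \psi\rangle \leq 0$ for every nonnegative $\psi \in H_0^1(\Omega)$. Setting $y_\lambda := \lambda y_1 + (1-\lambda)y_2$ and $p_\lambda$, $u_\lambda$ analogously, the linearity of $-\Delta$ and the convexity of $f$ transfer this inequality to $y_\lambda$, while $y_\lambda \leq p_\lambda$ is trivial. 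With $Y := S(p_\lambda, u_\lambda)$, the function $Y \vee y_\lambda$ is admissible in the VI for $Y$, so plugging $v = Y + (y_\lambda - Y)^+$ into that VI and subtracting the subsolution inequality for $y_\lambda$ evaluated against $\psi = (y_\lambda - Y)^+$ leads to
\begin{equation*}
	-\int_\Omega |\nabla (y_\lambda - Y)^+|^2\,\d x + \int_\Omega \bigl(f(Y) - f(y_\lambda)\bigr)(y_\lambda - Y)^+\,\d x \geq 0,
\end{equation*}
from which $(y_\lambda - Y)^+ = 0$ by monotonicity of $f$, i.e., $y_\lambda \leq Y$.

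For \ref{obstacle:iv}, once \ref{obstacle:i} is established the subsolution inequality together with $f(y) \geq f(0) = 0$ yields $-\Delta y \leq u$ distributionally. Let $\tilde y \in H_0^1(\Omega)$ solve $-\Delta \tilde y = u$; the classical Stampacchia $L^\infty$-estimate applies since $u \in L^q_+(\Omega)$ with $q > d/2$, so $\tilde y \in L^\infty(\Omega)$. Testing $-\Delta(\tilde y - y) \geq 0$ against $(y - \tilde y)^+ \in H_0^1(\Omega)$ gives $y \leq \tilde y$ a.e., and combining this with $y \geq 0$ yields $y \in L^\infty_+(\Omega)$ as desired.
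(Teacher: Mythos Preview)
Your proof is correct and, for parts \ref{obstacle:i}--\ref{obstacle:iii}, follows essentially the same lattice-truncation strategy as the paper (the paper cites a reference for existence rather than giving the direct minimization argument, and in \ref{obstacle:iii} it tests all three variational inequalities simultaneously with shifted test functions, whereas you first extract the distributional subsolution inequality $-\Delta y_i + f(y_i) \le u_i$ and then compare with $Y$; these are cosmetic variants of the same idea).

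The genuine difference is in part \ref{obstacle:iv}. The paper works directly with the solution $y$: it sets $y_k := y - \min(k,y)$, tests the VI with $v = \min(k,y)$, and obtains the energy inequality $c\|y_k\|_{H^1}^2 \le \int_\Omega |u\,y_k|\,\d x$, from which the $L^\infty$-bound follows via the standard De Giorgi--Stampacchia iteration combined with the Sobolev embedding for $H_0^1(\Omega)$. Your route is instead a comparison argument: you use the subsolution inequality and $f(y)\ge 0$ to get $-\Delta y \le u$, invoke the classical $L^\infty$-estimate for the linear Poisson problem $-\Delta\tilde y = u$ as a black box, and then show $0 \le y \le \tilde y$. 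Your argument is shorter and more conceptual, but it externalizes the regularity to the Poisson problem, while the paper's argument is self-contained and also yields the explicit estimate $\|y\|_{L^\infty(\Omega)} \le C\|u\|_{L^q(\Omega)}$ directly. Both are valid on an arbitrary bounded open set without boundary regularity, since only the zero-boundary Sobolev embedding is needed.
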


\begin{proof}
The unique solvability of \eqref{eq:ObstacleProblem} 
for all $p \in \bar P$ and $u \in U$ follows from \cite[Theorem 4-3.1]{Rodrigues1987}.
To prove \ref{obstacle:ii}, 
let $p_1, p_2 \in \bar P$, $u_1, u_2 \in U$ with $p_1 \leq p_2$, $u_1 \leq u_2$ be given
and set $y_1 := S(p_1, u_1)$ and $y_2 := S(p_2, u_2)$. 
From Stampacchia's lemma, see \cite[Theorem 5.8.2]{Attouch2006}, 
and $p_1 \leq p_2$, it follows that $H_0^1(\Omega) \ni y_1 - \max(0, y_1 - y_2) \leq y_1 \leq p_1$
and $H_0^1(\Omega) \ni y_2 + \max(0, y_1 - y_2) = \max(y_1, y_2) \leq p_2$ holds.
This allows us to use \eqref{eq:ObstacleProblem},
the monotonicity of $f$, 
and  the inequality $u_1\leq u_2$ to obtain 
\begin{equation*}
\begin{aligned}
0 &\leq  
\left \langle -\Delta y_1 + f(y_1) - u_1, - \max(0, y_1 - y_2)  \right \rangle + \left \langle -\Delta y_2 + f(y_2)- u_2, \max(0, y_1 - y_2)  \right \rangle
\\
&\leq \left \langle - \Delta (y_2 - y_1), \max(0, y_1 - y_2)  \right \rangle.
\end{aligned}
\end{equation*}
Using \cite[Theorem 5.8.2]{Attouch2006} and the inequality of Poincar{\'e}-Friedrichs on the right-hand side 
of the last estimate yields $ \max(0, y_1 - y_2)  = 0$. Thus, $y_1 \leq y_2$ and \ref{obstacle:ii} is proved. 
Since we trivially have $S(0,0) = 0$, this also shows \ref{obstacle:i}. 
To prove \ref{obstacle:iii}, we can argue along the same lines:
If $p_1, p_2 \in P = L^0_+(\Omega)$, $u_1, u_2 \in  U$, and $\lambda \in [0, 1]$ are given 
and if $y_1 := S(p_1, u_1)$, $y_2 := S(p_2, u_2)$,
then it also holds $\lambda p_1 + (1 - \lambda)p_2 \in L^0_+(\Omega) \subset \bar P$
and  $\lambda u_1 + (1 - \lambda)u_2 \in U$.
We define $y_{12} := S(\lambda p_1 + (1 - \lambda)p_2, \lambda u_1 + (1 - \lambda)u_2)$
and $w := \lambda y_1 + (1 - \lambda)y_2 - y_{12}$.
To establish \ref{obstacle:iii}, we have to show that $w \le 0$.
From \cite[Theorem 5.8.2]{Attouch2006}, we obtain $\max(0,w) \in H_0^1(\Omega)$
and it is easy to check that $w$, $y_1$, and $y_2$ satisfy
$y_1 - \max(0, w) \leq p_1$,
$y_2 - \max(0, w) \leq p_2$, and
$y_{12} + \max(0, w) \leq \lambda p_1 + (1 - \lambda)p_2$. 
This allows us to use these functions as test functions in the 
variational inequalities satisfied by $y_1$, $y_2$, and $y_{12}$, respectively.
By multiplying the resulting inequalities for $y_1$ and $y_2$ by $\lambda$ and $(1-\lambda)$, respectively,
we arrive at the estimates 
\begin{equation*}
\begin{aligned}
\left \langle -\lambda \Delta y_1 + \lambda f(y_1)- \lambda u_1 ,  - \max(0, w) \right \rangle \geq 0,
\\
\left \langle - (1-\lambda) \Delta y_2 + (1 - \lambda)f(y_2)- (1-\lambda) u_2 ,  - \max(0, w)  \right \rangle \geq 0,
\\
\left \langle  -\Delta y_{12}  + f(y_{12})- \lambda u_1   - (1 - \lambda) u_2,  \max(0, w)  \right \rangle \geq 0.
\end{aligned}
\end{equation*}
Adding these inequalities and exploiting the convexity and monotonicity of $f$ yields 
\begin{equation*}
\begin{aligned}
	0 &\leq \dual{ -\Delta w + \lambda f(y_1) + (1 - \lambda)f(y_2) - f(y_{12})}{  - \max(0, w) } 
	\\
	&\leq \dual{ -\Delta w + f( \lambda y_1 +  (1 - \lambda) y_2) - f(y_{12})}{  - \max(0, w) } 
	\\
	&\leq \dual*{  -\Delta w}{  - \max(0, w) }.
\end{aligned}
\end{equation*}
Due to \cite[Theorem 5.8.2]{Attouch2006}, this implies $w \le 0$ as desired. 
It remains to prove \ref{obstacle:iv}.
To this end, suppose that $p \in \bar P$ and
$u \in L^{q}_+(\Omega)$, $q \in [1, \infty] \cap (d/2, \infty]$, are arbitrary but fixed, set $y:= S(p, u) \ge 0$, 
and define $y_k := y - \min(k, y) = y - \min(k, \max(y, -k))$ for all $k \geq 0$. Then 
it holds $H_0^1(\Omega) \ni \min(k, y) \leq p$ for all $k \geq 0$ and it follows straightforwardly from 
\eqref{eq:ObstacleProblem}, our assumptions on $f$, the inequality of Poincar{\'e}-Friedrichs,
and \cite[Theorem 5.8.2]{Attouch2006} that 
there exists a constant $c>0$ such that $y_k$
satisfies
\[
c \|y_k \|_{H^1(\Omega)}^2 \leq  
\left \langle -\Delta y_k ,  y_k  \right \rangle 
=
\left \langle -\Delta y ,  y_k  \right \rangle 
\leq 
\int_\Omega \left | u y_k \right | \mathrm{d}x\qquad \forall k \geq 0.
\]
From this estimate and a standard calculation involving the 
Sobolev embedding $H_0^1(\Omega) \hookrightarrow L^{r}(\Omega)$ with $r= 2d/(d-2)$ for $d>2$,
$1 \leq r < \infty$ for $d=2$, and $r=\infty$ for $d=1$
(which can be found, e.g., in \cite[Lemma 2.3]{ChristofMeyer2015} and \cite[Lemma~2.8]{Troianiello1987}), 
it follows that there exists a constant $C>0$ satisfying $\|y\|_{L^\infty(\Omega)} \leq C \|u\|_{L^q(\Omega)}$.
This shows \ref{obstacle:iv} and completes the proof. 
Note that we do not need any additional assumptions on $\Omega$ here for the Sobolev embedding to hold
due to the zero boundary conditions.
\end{proof}

Since $\Theta$ is not defined (and nonnegative) on all of $L^2(\Omega)$,
we have to truncate its argument
to be able to reformulate \eqref{eq:ImpulseControlQVI} as a problem of the type \eqref{eq:FP}.

\begin{lemma}[reformulation of the impulse control problem \eqref{eq:ImpulseControlQVI}]%
\label{lemma:impulserefomulation}%
Let $U$, $P$, $\bar P$, $S\colon \bar P \times U \to L^2_+(\Omega)$, and 
$\Theta\colon \{v \in L^2(\Omega) \mid v \geq - \kappa\} \to P$
be defined as in \cref{lemma:ThetaSense,lemma:obstacleproperties},
and denote with $\Phi$ the map
$\Phi \colon L^2(\Omega) \to P$, $v \mapsto \Theta(\max(-\kappa, v))$.
Suppose that a $u \in U$ is given. Then $y \in H_0^1(\Omega)$
is a solution of  \eqref{eq:ImpulseControlQVI} if and only if 
it is a solution of the equation 
$y = S(\Phi(y), u)$. 
\end{lemma}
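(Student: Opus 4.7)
The proof is essentially an unpacking of definitions, and the strategy is to exploit the fact that once we know $y \geq 0$, the truncation in the definition of $\Phi$ becomes a no-op, so that $\Phi(y) = \Theta(y)$ and the fixed-point equation $y = S(\Phi(y), u)$ literally says that $y$ solves the obstacle problem with obstacle $\Theta(y)$, which is exactly \eqref{eq:ImpulseControlQVI}. Thus the plan is to handle the two implications separately and, for each direction, to first extract the nonnegativity of $y$ and then reduce $\Phi$ to $\Theta$.

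First, for the forward implication, assume $y \in H_0^1(\Omega)$ solves \eqref{eq:ImpulseControlQVI}. Then $y \geq 0$ by assumption and, since $\kappa \geq 0$, we have $\max(-\kappa, y) = y$ a.e.\ in $\Omega$. By \cref{lemma:ThetaSense}, $\Theta(y) \in L^0_+(\Omega) = P$ is well-defined, so $\Phi(y) = \Theta(\max(-\kappa, y)) = \Theta(y) \in P \subset \bar P$. Comparing the variational inequality in \eqref{eq:ImpulseControlQVI} (which involves the unilateral constraint $v \leq \Theta(y)$) with the definition of $S(\cdot, u)$ from \eqref{eq:ObstacleProblem} in \cref{lemma:obstacleproperties}, one sees that $y$ solves the obstacle problem with obstacle $\Theta(y)$ and right-hand side $u$. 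By the unique solvability asserted in \cref{lemma:obstacleproperties}, this means $y = S(\Theta(y), u) = S(\Phi(y), u)$, as claimed.

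For the converse, assume $y \in H_0^1(\Omega)$ satisfies $y = S(\Phi(y), u)$. By the mapping behavior of $S$ collected in \cref{lemma:obstacleproperties}\ref{obstacle:i}, we obtain $y \in L^2_+(\Omega)$, i.e., $y \geq 0$ a.e.\ in $\Omega$. Combined with $\kappa \geq 0$, this again gives $\max(-\kappa, y) = y$, hence $\Phi(y) = \Theta(y)$. The equation $y = S(\Theta(y), u)$ therefore says that $y$ is the unique solution of the obstacle problem with obstacle $\Theta(y)$ and data $u$, which is exactly the variational inequality part of \eqref{eq:ImpulseControlQVI}; the constraint $0 \leq y \leq \Theta(y)$ follows from $y \in L^2_+(\Omega)$ and the feasibility of $S(\Theta(y), u)$ in the obstacle problem. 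Thus $y$ solves \eqref{eq:ImpulseControlQVI}.

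There is no real obstacle in either direction; the only point that deserves a sentence is the observation that the $\max(-\kappa, \cdot)$ in the definition of $\Phi$ is introduced solely to extend $\Theta$ from $\{v \in L^2(\Omega) \mid v \geq -\kappa\}$ to the full space $L^2(\Omega)$ (so that $\Phi$ fits the framework of \cref{subsec:2.1}), and that this truncation is inactive on nonnegative functions, which is precisely the range of $S(\cdot, u)$ and the set where solutions of \eqref{eq:ImpulseControlQVI} live.
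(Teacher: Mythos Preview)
Your proof is correct and follows essentially the same approach as the paper: both directions hinge on observing that $y \ge 0$ (by assumption in one direction, by \cref{lemma:obstacleproperties}\ref{obstacle:i} in the other), so that the truncation in $\Phi$ is inactive and $\Phi(y) = \Theta(y)$, whence the fixed-point equation and \eqref{eq:ImpulseControlQVI} coincide. The paper's version is more terse but the logic is identical.
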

\begin{proof}
If $y$ solves \eqref{eq:ImpulseControlQVI}, then it holds $y \geq 0$ 
and, by the definitions of $S$ and $\Phi$, we have $y = S(\Theta(y), u) = S(\Phi(y), u)$.
If, conversely, we start with a solution $y$ of $y = S(\Phi(y), u)$,
then it follows from $\Phi(y) \in \bar P$ and \cref{lemma:obstacleproperties}\ref{obstacle:i}
that $y \geq 0$ holds and, as a consequence, that $y  = S(\Phi(y), u) = S(\Theta(y), u)$. 
This shows that $y$ also solves \eqref{eq:ImpulseControlQVI} and completes the proof. 
\end{proof}

As the equation $y = S(\Phi(y), u)$ has precisely the form \eqref{eq:FP},
we may now use our abstract analysis to arrive at 
the following two main results of this subsection.

\begin{theorem}[concavity and pointwise directional differentiability on $H^{-1}_+(\Omega)$]%
\label{th:impulseH-1}%
The impulse control problem \eqref{eq:ImpulseControlQVI}
possesses a nonempty set of solutions $\mathbb{S}(u) \subset H_0^1(\Omega)$ 
for all $\smash{u \in H^{-1}_+(\Omega)}$ and this solution set 
possesses unique smallest and largest elements $m(u)$ and $M(u)$. 
The map $M\colon H^{-1}_+(\Omega) \to H_0^1(\Omega)$, $u \mapsto M(u)$,
is concave in the sense that 
$\lambda M(u_1) + (1 - \lambda)M(u_2) \leq M(\lambda u_1 + (1-\lambda)u_2)$ 
holds a.e.\ in $\Omega$ for all $u_1, u_2 \in H^{-1}_+(\Omega)$ and $\lambda \in [0, 1]$.
Further, $M$ is directionally differentiable in the sense that, for all  $u \in H^{-1}_+(\Omega)$
and $h \in H^{-1}(\Omega)$ satisfying $u + \tau_0 h \in H^{-1}_+(\Omega)$
for a $\tau_0 > 0$, there exists a unique $M'(u;h) \in L^0(\Omega, (-\infty, \infty])$
such that the difference quotients
\[
\frac{M(u + \tau h) - M(u)}{\tau},\qquad \tau \in (0, \tau_0),
\]
converge pointwise a.e.\ to $M'(u;h)$ along every sequence $\{\tau_n\} \subset (0, \tau_0)$ with $\tau_n \to0$.
\end{theorem}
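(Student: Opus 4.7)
The plan is to cast the QVI \eqref{eq:ImpulseControlQVI} in the abstract form \eqref{eq:FP} via \cref{lemma:impulserefomulation}, to verify the standing assumptions of \cref{subsec:2.1} together with \cref{ass:DirDiff}, and then to invoke \cref{th:solvability,prop:Mconcave,th:monononeM}. Concretely, I would take $(X,\Sigma,\mu)$ to be the (completed) Lebesgue measure space on $\Omega$, $\bar P := L^0_+(\Omega) \cup \{\infty\}$, $P := L^0_+(\Omega)$, $U := H^{-1}_+(\Omega)$, $S$ the obstacle-problem solution operator from \cref{lemma:obstacleproperties}, and $\Phi(v) := \Theta(\max(-\kappa,v))$. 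The required mapping behaviour and monotonicity of $S$ come from \cref{lemma:obstacleproperties}\ref{obstacle:i} and \ref{obstacle:ii}; those of $\Phi$ follow from \cref{lemma:ThetaSense}, the monotonicity of the truncation $v \mapsto \max(-\kappa,v)$, and the elementary observation that the essinf of a pointwise nondecreasing family is itself nondecreasing (combined, if necessary, with Fubini and translation invariance of Lebesgue measure). Once the standing assumptions are in place, \cref{th:solvability} provides the nonemptiness of $\mathbb{S}(u)$ and the existence and uniqueness of $m(u)$ and $M(u)$ for every $u \in H^{-1}_+(\Omega)$; the claimed $H_0^1$-regularity is automatic from the fixed-point identity $M(u) = S(\Phi(M(u)),u)$ and the image of $S$.

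Next, I would verify \cref{ass:DirDiff} so that \cref{prop:Mconcave,th:monononeM} become applicable. The convexity of $P$ and $U$ is immediate, and the joint concavity of $S$ is precisely \cref{lemma:obstacleproperties}\ref{obstacle:iii}, so the only nontrivial point is the concavity of $\Phi$ on $L^2_+(\Omega)$. Here I would exploit that, on $L^2_+(\Omega)$, the truncation $\max(-\kappa,\cdot)$ is the identity (because $\kappa \geq 0$), so that the required inequality reduces to
\[
\lambda\,\Theta(v_1) + (1-\lambda)\,\Theta(v_2) \leq \Theta(\lambda v_1 + (1-\lambda) v_2) \quad \text{a.e.\ in } \Omega
\]
for all $v_1, v_2 \in L^2_+(\Omega)$ and $\lambda \in [0,1]$. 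This follows directly from the classical superadditivity property $\essinf_\xi(g_1 + g_2) \geq \essinf_\xi g_1 + \essinf_\xi g_2$ of the essential infimum, applied pointwise in $x$ to $g_i(\xi) := \lambda_i\bigl(c_0(\xi) + v_i(x+\xi)\bigr)$ with the weights $\lambda_1 := \lambda$, $\lambda_2 := 1-\lambda$; the measurability of the resulting $x$-dependent essinfima is again supplied by \cref{lem:partial_essinf} as in the proof of \cref{lemma:ThetaSense}.

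With \cref{ass:DirDiff} secured, \cref{prop:Mconcave} yields the pointwise a.e.\ concavity of $M$ on $H^{-1}_+(\Omega)$ at once. For the directional derivative claim, it suffices to note that whenever $u \in H^{-1}_+(\Omega)$, $h \in H^{-1}(\Omega)$, and $\tau_0 > 0$ satisfy $u + \tau_0 h \in H^{-1}_+(\Omega)$, the identity $h = \tau_0^{-1}\bigl((u+\tau_0 h) - u\bigr)$ gives $h \in \mathbb{R}^+(U - u)$, while the convexity of $U$ provides $u + \tau h \in U$ for every $\tau \in (0,\tau_0)$; \cref{th:monononeM} then produces the desired unique $M'(u;h) \in L^0(\Omega,(-\infty,\infty])$ together with the a.e.\ pointwise convergence along arbitrary null sequences. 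I expect the main bookkeeping obstacle to be precisely the concavity step for $\Phi$, where the superadditivity of the essinf has to be combined carefully with the measurability statement of \cref{lem:partial_essinf} so as to yield a genuine inequality in $L^0_+(\Omega)$; every other step is a direct appeal to the abstract machinery developed in \cref{sec:2,sec:4}.
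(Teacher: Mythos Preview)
Your proposal is correct and follows essentially the same route as the paper: both set up the abstract framework with the same choices of $X$, $\bar P$, $P$, $U$, $S$, $\Phi$, verify the standing assumptions and \cref{ass:DirDiff} via \cref{lemma:obstacleproperties} and the pointwise concavity of $\Theta$, and then invoke \cref{th:solvability,prop:Mconcave,th:monononeM} together with \cref{lemma:impulserefomulation}. Your treatment even spells out a couple of points (e.g., why $h \in \R^+(U-u)$ and $u+\tau h \in U$ for $\tau \in (0,\tau_0)$) that the paper leaves implicit.
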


\begin{proof}
Let $U$, $P$, $\bar P$, $S\colon \bar P \times U \to L^2_+(\Omega)$, 
$\Theta\colon \{v \in L^2(\Omega) \mid v \geq - \kappa\} \to P$,
and $\Phi \colon L^2(\Omega) \to P$ be defined as in \cref{lemma:ThetaSense,lemma:obstacleproperties,lemma:impulserefomulation}.
Then it follows from \cref{lemma:obstacleproperties}
that $U$, $P$, $\bar P$, and $S$ satisfy all of the 
conditions in \cref{subsec:2.1} and \cref{ass:DirDiff}
(with $X =\Omega$,  $\Sigma$ as the Lebesgue $\sigma$-algebra on $\Omega$,
and $\mu$ as the Lebesgue measure).
Using \eqref{eq:ThetaDef},
it is easy to check that this is also true for $\Phi$. 
Indeed, for all $v_1, v_2\in L^2(\Omega)$, $v_1 \leq v_2$, we clearly have
$\max(-\kappa,v_1) \le \max(-\kappa,v_2)$ and
$\Phi(v_1) = \Theta(\max(-\kappa,v_1)) \le \Theta(\max(-\kappa,v_2)) = \Phi(v_2)$
by the definitions of $\Theta$ and $\Phi$.
Further, \eqref{eq:ThetaDef}
implies that $\Theta$ is pointwise a.e.\ concave.
Thus,  for all $\lambda \in [0, 1]$ and all
$v_1, v_2 \in \{v \in L^2(\Omega) \mid v \geq - \kappa\}$, 
we obtain from the definition of $\Phi$ that
$
\Phi(\lambda v_1 + (1 - \lambda) v_2) 
=
\Theta(\lambda v_1 + (1 - \lambda) v_2)
\geq 
\lambda \Theta(v_1) + (1 - \lambda)\Theta(v_2)
=
\lambda \Phi(v_1) + (1 - \lambda)\Phi(v_2).
$
\Cref{th:solvability,prop:Mconcave,th:monononeM,lemma:impulserefomulation} now yield the claims. 
\end{proof}

\begin{theorem}[Lipschitz continuity and directional differentiability on $L^\infty_+(\Omega)$]
\label{th:impulsLinfty}%
On the set $\smash{L^\infty_+(\Omega) \subset  H^{-1}_+(\Omega)}$,
the minimal and maximal solution map of the impulse control problem \eqref{eq:ImpulseControlQVI}
are well-defined as operators
$m, M\colon L^\infty_+(\Omega) \to H_0^1(\Omega) \cap L^\infty_+(\Omega)$.
Further, the following is true:
\begin{enumerate}
\item\label{th:impulsLinfty:ii} 
The functions
$m$ and $M$ are locally Lipschitz continuous on $L^\infty_\oplus(\Omega)$
in the sense that, for every $u \in L^\infty_\oplus(\Omega)$,
there exist constants $C, r> 0$
such that
\begin{equation}
\label{eq:LipschitzImpuls}
\|m(v_1) - m(v_2)\|_{L^\infty(\Omega)} + \|M(v_1) - M(v_2)\|_{L^\infty(\Omega)}\leq C \|v_1 - v_2\|_{L^\infty(\Omega)}
\end{equation}
holds for all $v_1, v_2 \in  L_+^\infty(\Omega)$ satisfying $\norm{v_i- u}_{L^\infty(\Omega)} \leq r$, $i=1,2$. 

\item\label{th:impulsLinfty:iii} 
The functions
$m$ and $M$ are weakly continuous on $L^\infty_\oplus(\Omega)$ 
in the sense that, for all $u \in L^\infty_\oplus(\Omega)$ and all $\{u_n\} \subset L_\oplus^\infty(\Omega)$
with $u_n \to u$ in $L^\infty(\Omega)$, we have $m(u_n) \weakly m(u)$ and $M(u_n) \weakly M(u)$ in $H_0^1(\Omega)$.

\item\label{th:impulsLinfty:iv} 
The function $M$ is Hadamard directionally differentiable 
on the set $L_\oplus^\infty(\Omega)$ in the sense that,
for all $u \in L_\oplus^\infty(\Omega)$ and all 
$h \in L^\infty(\Omega)$, there exists a unique $M'(u; h) \in L^\infty(\Omega)$ such that,
for all  $\{\tau_n\} \subset (0, \infty)$, $\{h_n\} \subset L^\infty(\Omega)$ satisfying
$\tau_n \to 0$, $\|h - h_n\|_{L^\infty(\Omega)} \to 0$, and $u + \tau_n h_n \in L^\infty_+(\Omega)$ for all $n$, we have 
\[
\frac{M(u + \tau_n h_n) - M(u)}{\tau_n} \to M'(u; h) \text{ in } L^q(\Omega) \text{ for all } 1 \leq q < \infty
\]
and 
\[
\frac{M(u + \tau_n h_n) - M(u)}{\tau_n} \weaklystar M'(u; h) \text{ in } L^\infty(\Omega).
\]

\item\label{th:impulsLinfty:v} 
If $\kappa > 0$ holds, then \eqref{eq:ImpulseControlQVI} is uniquely solvable for all 
$u \in L^\infty_+(\Omega)$ and it holds $m \equiv M \equiv \mathbb{S}$ on $L^\infty_+(\Omega)$. 
In this situation, the directional derivative $\mathbb{S}'(u; h) = M'(u;h)$ 
of the solution operator $\mathbb{S}$ of \eqref{eq:ImpulseControlQVI}
at a point $u \in L^\infty_{\oplus}(\Omega)$ 
in a direction $h \in L^\infty(\Omega)$ satisfying $u + \tau_0 h \in L^\infty_+(\Omega)$ for a $\tau_0 > 0$ is
uniquely characterized by the condition that it is the smallest element of the set
\[
			\left \{
				\zeta \in \smash{L^{\infty}(\Omega)}
				\left |\, 
				\begin{aligned}
					&\exists  \{\zeta_n\} \subset \smash{L^{\infty}(\Omega)}, \{\tau_n\} \subset (0, \tau_0) \colon
					\\
					 &\zeta_n \leq \zeta_{n+1} \text{ and } \tau_{n+1} \leq \tau_n \text{ for all } n, 
					 \\
					& \tau_n \to 0\text{ for } n \to \infty,\;\zeta_n  \to \zeta \text{ in } L^q(\Omega) 
					\text{ for all } q \in [1, \infty),
					\\
					 &\mathbb{S}(u) +\tau_n\zeta_n \text{ is supersol.\ of } y = S(\Phi(y), u + \tau_n h)
					\text{ for all }n,
					\\
					 &\Psi'((\mathbb{S}(u),u);(\zeta_n, h)) - \zeta_n \to 0~ \text{a.e.\ in }\Omega \text{ for }n \to \infty
				\end{aligned}
				\right.
			\right \}.
\]
Here, $S$, $\Phi$, and $\Psi$ are defined as in \cref{lemma:impulserefomulation,lem:concdirdifPsi}.
\end{enumerate}
\end{theorem}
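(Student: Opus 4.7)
The strategy is to apply \cref{thm:solutions_lipschitz,cor:Hadamard,th:auxQVI} to the fixed-point reformulation $y = S(\Phi(y), u)$ from \cref{lemma:impulserefomulation}, taking the parameter set as $U := L^\infty_+(\Omega)$ (with $Y := \Omega$) and $S$, $P$, $\bar P$ as in \cref{lemma:obstacleproperties}. Items \ref{obstacle:ii}, \ref{obstacle:iii}, and \ref{obstacle:iv} of \cref{lemma:obstacleproperties} already supply the monotonicity, concavity, and $L^\infty$-regularity of $S$ needed in \cref{ass:Lipschitz,ass:DirDiff}, and the superhomogeneity $\lambda S(p,u) \leq S(\lambda p, \lambda u)$ required in \cref{ass:Lipschitz} follows from this concavity combined with $0 \in P \cap U$ and the trivial identity $S(0,0) = 0$. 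The opening well-definedness statement $m, M\colon L^\infty_+(\Omega) \to H_0^1(\Omega) \cap L^\infty_+(\Omega)$ then falls out of \cref{lemma:obstacleproperties}\ref{obstacle:iv} combined with the fixed-point identity $m(u) = S(\Phi(m(u)), u)$.

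The technical step is the verification of the structural properties of $\Phi(v) := \Theta(\max(-\kappa, v))$. The essential infimum is concave in its integrand, so $\Theta$ is concave on $\{v \in L^2(\Omega) \mid v \geq -\kappa\}$, and because $\max(-\kappa, \cdot)$ is the identity on $L^2_+(\Omega)$ this yields the concavity requirement of \cref{ass:DirDiff} for $\Phi$ on $L^2_+(\Omega)$. Moreover $\Phi(0) = \Theta(0) \geq \kappa \geq 0$, so the same concavity together with $0 \in L^2_+(\Omega)$ gives $\Phi(\lambda v) \geq \lambda \Phi(v) + (1-\lambda)\Phi(0) \geq \lambda \Phi(v)$ as demanded by \cref{ass:Lipschitz}\ref{item:Lipschitz_4}. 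With this, claim \ref{th:impulsLinfty:ii} follows from \cref{thm:solutions_lipschitz} applied with $q = \infty$, and claim \ref{th:impulsLinfty:iv} from \cref{cor:Hadamard} applied with $r = 1$, $s = \infty$, where $r = 1$ is admissible since $\Omega$ has finite Lebesgue measure and hence $L^\infty(\Omega) \subset L^1(\Omega)$.

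For the weak $H_0^1$-continuity in \ref{th:impulsLinfty:iii}, I plan to invoke \cref{rem:LipschitzComments}\ref{rem:LipschitzComments:iv} with $V = H_0^1(\Omega)$. The uniform bound $\|S(p,u)\|_{H_0^1(\Omega)} \leq C\|u\|_{H^{-1}(\Omega)}$ that this requires is obtained by testing the obstacle VI \eqref{eq:ObstacleProblem} satisfied by $y = S(p, u) \geq 0$ with the admissible function $0 \leq p$ and exploiting the monotonicity of $f$ (which yields $\int f(y) y \geq 0$) together with Poincar\'e's inequality; the resulting constant is independent of $p$, so \cref{rem:LipschitzComments}\ref{rem:LipschitzComments:iv} immediately delivers the desired weak convergence.

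Finally, for \ref{th:impulsLinfty:v} the assumption $\kappa > 0$ allows me to pick $\varepsilon \in (0, \kappa)$, which makes $\max(-\kappa, v) = v$ for all $v \geq -\varepsilon$; the concavity of $\Theta$ on $\{v \geq -\kappa\}$ therefore transfers to concavity of $\Phi$ on $\{v \geq -\varepsilon\}$ as required by \cref{ass:AuxProb}\ref{ass:AuxProb:iv}, while the other items of \cref{ass:AuxProb} follow from the properties already established. \Cref{prop:soluniqueness} then yields $m \equiv M \equiv \mathbb{S}$, and \cref{th:auxQVI} applied with $r = 1$ (so that $L^{[1,\infty]}(\Omega) = L^\infty(\Omega)$) produces the claimed characterization of $\mathbb{S}'(u; h)$ as the smallest element of $\AA$. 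The most delicate ingredient in the plan is the careful deduction of the concavity and superhomogeneity of $\Phi$ from the essential-infimum formula \eqref{eq:ThetaDef} and the nonnegativity of $c_0$ and $\kappa$; once this is in place, the rest is a clean invocation of the abstract results of \cref{sec:3,sec:4,sec:5}.
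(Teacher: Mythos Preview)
Your proposal is correct and follows essentially the same route as the paper: restrict to $U = L^\infty_+(\Omega)$, verify \cref{ass:Lipschitz,ass:DirDiff} (and, for $\kappa>0$, \cref{ass:AuxProb}) via \cref{lemma:obstacleproperties} and the concavity of the essential-infimum map $\Theta$, then invoke \cref{thm:solutions_lipschitz}, \cref{cor:Hadamard}, \cref{rem:LipschitzComments}\ref{rem:LipschitzComments:iv} (with the $H_0^1$-bound obtained by testing \eqref{eq:ObstacleProblem} with $v=0$), and \cref{th:auxQVI}. The only cosmetic difference is that the paper takes $\varepsilon = \kappa$ in \cref{ass:AuxProb}\ref{ass:AuxProb:iv} whereas you take $\varepsilon \in (0,\kappa)$; both choices work since $\max(-\kappa, v) = v$ already for $v \geq -\kappa$.
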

\begin{proof}
We consider the same $P$, $\bar P$, $S$, $\Theta$, and $\Phi$
as before, but this time we restrict the function
$S$ in the second argument to the set $\smash{\tilde U := L^\infty_+(\Omega) \subset H_+^{-1}(\Omega)}$. 
For this setting, we obtain from \cref{lemma:obstacleproperties} and the same arguments 
as in the proof of \cref{th:impulseH-1} that all of the conditions in \cref{subsec:2.1} and \cref{ass:DirDiff,ass:Lipschitz} are satisfied
(with $X = Y = \Omega$,  $\Sigma = \Xi$ as the Lebesgue $\sigma$-algebra on $\Omega$,
and $\mu = \eta$ as the Lebesgue measure) and, in the case $\kappa > 0$,
that \cref{ass:AuxProb} holds as well (with $\varepsilon = \kappa$). 
By invoking \cref{prop:soluniqueness,thm:solutions_lipschitz,cor:Hadamard,lemma:obstacleproperties,lemma:impulserefomulation,th:auxQVI}
and by exploiting that
$\smash{L^{[r, \infty]}(\Omega)} = L^\infty(\Omega)$ holds for all $r \in [1,\infty]$
due to the boundedness of $\Omega$, 
it now follows immediately that $m$ and $M$ 
possess the asserted mapping properties on $\smash{L^\infty_+(\Omega)}$.  
Note that \eqref{eq:LipschitzImpuls} is a consequence of \eqref{eq:randomeq273545} here
and that the weak continuity of $m$ and $M$ as functions from $L_\oplus^\infty(\Omega)$ to $H_0^1(\Omega)$
in \ref{th:impulsLinfty:iii} is obtained from the arguments outlined in 
\cref{rem:LipschitzComments}\ref{rem:LipschitzComments:iv} and the estimate 
$\smash{\|S(p,u)\|_{H^1(\Omega)} \leq C(\Omega) \|u\|_{H^{-1}(\Omega)}}$ for all $p \in \bar P$ and $u \in U$
that follows straightforwardly from \eqref{eq:ObstacleProblem} with $v=0$.
This completes the proof.
\end{proof}

We remark that, even in the case $\kappa > 0$, 
it is not possible to invoke the uniqueness result of \cite{Laetsch1975}
in the situation of \cref{th:impulseH-1}
since $S(p,u) \in L^\infty(\Omega)$ may not hold for all $p \in L^0_+(\Omega)$ and all $u \in H^{-1}_+(\Omega)$.
Similarly, in \cref{th:impulsLinfty}\ref{th:impulsLinfty:v}, 
we cannot apply the second part of \cref{th:auxQVI} to conclude that
the directional derivatives of $\mathbb{S}$ are the 
smallest elements of the ``ordinary'' solution sets of the
QVIs $\zeta = \Psi'((\mathbb{S}(u),u);(\zeta, h))$ since the 
map $\Theta$ lacks pointwise a.e.\ continuity properties w.r.t.\ convergence in the  $L^q(\Omega)$-spaces for $1 \leq q < \infty$. 
Both of these effects make the sensitivity analysis of \eqref{eq:ImpulseControlQVI} a delicate issue. 
Before we proceed,
we would like to point out that \cref{th:impulsLinfty} generalizes the result on problem \eqref{eq:ImpulseControlQVI} 
obtained in 
\cite[section~7.1.2]{Alphonse2020-1}, where the continuity of the maps
$m$ and $M$ as  functions from $L^\infty(\Omega)$ to $L^2(\Omega)$
is proved on the set $L^\infty_\oplus(\Omega)$
in the case $d=1$ and $f = 0$. 
We obtain not only continuity but even local Lipschitz continuity on $L^\infty_\oplus(\Omega)$ for all dimensions $d \in \mathbb{N}$,
for the nonlinear QVI, and for $m$ and $M$ as functions into the space $L^\infty(\Omega)$. 
Our results further establish the Hadamard directional
differentiability of $M$ on $L^\infty_\oplus(\Omega)$ into all $L^q$-spaces 
in the sense of \eqref{eq:Lqdirdiff} and \eqref{eq:weakstardirdiff}
for all $d \in \mathbb{N}$ (without any sign conditions on $h$)
and, in the case $\kappa > 0$, uniquely characterize the directional derivatives
of the solution map $\mathbb{S}$ of \eqref{eq:ImpulseControlQVI}. 
At least to the best of the authors' knowledge, \cref{th:impulsLinfty} is the first result 
in the literature to accomplish this for the problem \eqref{eq:ImpulseControlQVI}.
We are able to prove all of these properties because we do not require any restrictive assumptions on the regularity or complete continuity
 of the map $\Phi$, cf.\ \cite[Assumption 1]{Alphonse2020-1}
and also
\cite{Alphonse2019-1,Alphonse2019-2,Alphonse2020-2,Alphonse2020-4,Wachsmuth2020}.


\subsection{Parabolic QVIs}%
\label{subsec:6.3}%
As a third example, we consider a parabolic QVI
with boundary controls. 
Let  $\Omega \subset \R^d$, $d \in \mathbb{N}$,
be a bounded Lipschitz domain with boundary $\partial \Omega$, let $T > 0$
and  $\psi \in L^\infty_+(\Omega)$ be given,
and let $g\colon \R \to \R$ be a globally Lipschitz continuous, bounded, nonnegative, nondecreasing
function that 
is concave on $[0, \infty)$.
Given a
$u \in L_+^\infty((0, T) \times \partial \Omega) 
\subset L^2((0, T) \times \partial \Omega) 
\cong L^2(0, T; L^2(\partial \Omega))$, we are interested in
the problem of finding a (potentially weak) solution $y$ of the variational inequality
\begin{equation}
\label{eq:parQVI}
\begin{aligned}
&y(0) = 0,\qquad H^1(\Omega) \ni y(t) \leq \psi + \Phi(y) \text{ a.e.\ in }\Omega \text{ for a.a.\ } t \in (0, T),
\\
&\int_\Omega \partial_t y(t) (v - y(t)) + \nabla y(t) \cdot \nabla (v - y(t))\mathrm{d}x 
- \int_{\partial \Omega} u(t)(v - y(t))\mathrm{d}s \geq 0
\\
&\mspace{120mu}\forall v \in H^1(\Omega),~v \leq \psi + \Phi(y) \text{ a.e.\ in } \Omega 
\text{ for a.a.\ } t \in (0, T)
\end{aligned}
\end{equation}
with $\Phi$ defined by $\Phi(y):= w(T)$ and $w$ as the solution of the heat equation
\begin{equation}
\label{eq:PhiParPDE}
\partial_t w - \Delta w = g(y) \text{ in } (0, T) \times\Omega,
\quad 
w = 0  \text{ on } (0, T)\times \partial \Omega ,
\quad 
w(0) = 0	 \text{ in } \Omega.
\end{equation}%
Here and in what follows, the appearing Lebesgue, Sobolev, and Bochner spaces $L^q(\Omega)$,
$H^1(\Omega)$, $L^2(0, T;L^2(\partial \Omega))$, etc.\ are defined as usual, see \cite{Attouch2006,Barbu1984,Heinonen2015}, 
and endowed with the canonical pointwise a.e.\ partial orders induced by the underlying measure spaces,
$\partial_t$ denotes the time derivative in the Sobolev-Bochner sense, $\nabla$ is the weak spatial gradient, 
$\Delta$ is the distributional spatial Laplacian, and $g$ acts by superposition.
We would like to emphasize that 
\eqref{eq:parQVI} is again a model problem. Other boundary conditions,
functions $\Phi$, etc.\ can be studied with the same techniques that we use in the following. 
To transform \eqref{eq:parQVI} into a problem of the type \eqref{eq:FP},
we define $X := (0, T) \times \Omega$ (endowed with the Lebesgue measure), 
$\bar P := L^\infty_+(\Omega) \cup \{\infty\}$
(endowed with the partial order induced by the a.e.-sense on $\Omega$ and with the function
that is $\infty$ a.e.\ in $\Omega$ as the 
largest element),
$P := L^\infty_+(\Omega)$,
and $U := L_+^\infty((0, T) \times \partial \Omega) $. 
Note that these $X$, $\bar P$, $P$, and $U$ satisfy all of the 
conditions in \cref{subsec:2.1} and  \cref{ass:DirDiff,ass:Lipschitz,ass:AuxProb}
(with $Y := (0, T) \times \partial \Omega$ and $\eta$
as the completion of the product measure of the Lebesgue measure on $(0, T)$ and the
surface measure on $\partial \Omega$). 
The next two lemmas establish that the same is true for the maps $S$ and $\Phi$ associated with \eqref{eq:parQVI}
and \eqref{eq:PhiParPDE}.

\begin{lemma}[properties of the parabolic obstacle problem]%
\label{lemma:parabolicobstacleprob}%
Let $P$, $\bar P$, $U$, and $X$ be defined as above
and set 
$\smash{\tilde U := \{ v\in C^{0,1}([0, T] \times \partial \Omega) \mid v \geq 0 \text{ and }v|_{\{0\} \times \partial \Omega} = 0\} \subset U}$.
Then, for all  $p \in \bar P$ and all $u \in \tilde U$, 
there exists a unique strong solution $y = S(p, u) \in {H^1(0, T;H^1(\Omega)) \cap W^{1, \infty}(0, T;L^2(\Omega))}$ 
of the parabolic variational inequality 
\begin{equation}
\label{eq:parQVI_2}
\begin{aligned}
&y(0) = 0,\qquad H^1(\Omega) \ni y(t) \leq \psi + p \text{ a.e.\ in }\Omega \text{ for a.a.\ } t \in (0, T),
\\
& \int_\Omega \partial_t y(t) (v - y(t)) + \nabla y(t) \cdot \nabla (v - y(t))\mathrm{d}x 
- \int_{\partial \Omega} u(t)(v - y(t))\mathrm{d}s \geq 0
\\
&\mspace{120mu}\forall v \in H^1(\Omega),~v \leq \psi + p \text{ a.e.\ in } \Omega 
\text{ for a.a.\ } t \in (0, T).
\end{aligned}
\end{equation}
Further, there exists a constant $C > 0$ satisfying
\begin{equation}
\label{eq:ParLipschitz}
\begin{aligned}
&\left \| S(p, u_1) - S(p, u_2)\right \|_{C([0, T];L^2(\Omega))}
+
\left \| S(p, u_1) - S(p, u_2)\right \|_{L^2(0, T;H^1(\Omega))}
\\
&\qquad\qquad\qquad\qquad \leq C \|u_1 - u_2\|_{L^2( (0, T) \times \partial \Omega)}
\quad \forall u_1, u_2 \in \tilde U~\forall p \in \bar P. 
\end{aligned}
\end{equation} 
Due to \eqref{eq:ParLipschitz}, the solution map $S$ of \eqref{eq:parQVI_2}
can be extended uniquely by continuity to a function 
 $S\colon \bar P \times U \to C([0, T];L^2(\Omega)) \cap L^2(0, T;H^1(\Omega)) \subset {L^2(0, T;L^2(\Omega)) \cong L^2(X)}$.
 This weak solution operator $S$ of \eqref{eq:parQVI_2} possesses the 
 following properties:
\begin{enumerate}
\item\label{parobstacle:i}
It holds $S(p, u) \in L^2_+(X)$ for all $p \in \bar P$, $u \in U$.
\item\label{parobstacle:ii} 
It holds $S(p_1, u_1) \leq S(p_2, u_2)$ for all $p_1, p_2 \in \bar P$, $u_1, u_2 \in U$, $p_1 \leq p_2$, $u_1 \leq u_2$.
\item\label{parobstacle:iii} 
It holds $S(p, u) \in L^\infty_+(X)$ for all $p \in P$, $u \in U$.
\item\label{parobstacle:iv}  
It holds $\lambda S(p_1, u_1) + (1 - \lambda)  S(p_2, u_2) \leq S(\lambda p_1 + (1 - \lambda)p_2, \lambda u_1 + (1 - \lambda)u_2)$
for all $p_1, p_2 \in P$, $u_1, u_2 \in  U$, and $\lambda \in [0, 1]$.
\item\label{parobstacle:v}  
It holds $\| S(p_1, u) -  S(p_2, u) \|_{L^\infty(X)} \leq \| p_1 - p_2 \|_{L^\infty(\Omega)}$
for all $p_1, p_2 \in P$, $u \in  U$.
\end{enumerate}
\end{lemma}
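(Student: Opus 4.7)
The plan is to first establish the existence of the strong solution $S(p, u)$ for sufficiently regular boundary data $u \in \tilde U$, then extend $S$ by continuity to $\bar P \times U$ via the Lipschitz bound \eqref{eq:ParLipschitz}, and finally verify the order-theoretic properties (i)--(v) by test-function arguments analogous to those used in \cref{lemma:obstacleproperties}. For $u \in \tilde U$, the existence and uniqueness of a strong solution $y = S(p,u) \in H^1(0,T;H^1(\Omega)) \cap W^{1,\infty}(0,T;L^2(\Omega))$ of \eqref{eq:parQVI_2} follow from classical theory for parabolic variational inequalities with Lipschitz-in-time data, see, e.g., \cite{Barbu1984,Rodrigues1987}; the crucial ingredients are the compatibility $y(0) = 0 \le \psi + p$, the $C^{0,1}$-regularity of $u$, and a standard penalization/translation argument (comparing $y(\cdot + h)$ to $y(\cdot)$) that yields the uniform $L^2$-bound on $\partial_t y$.

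The Lipschitz estimate \eqref{eq:ParLipschitz} is obtained by cross-testing: since $S(p,u_1)$ and $S(p,u_2)$ solve \eqref{eq:parQVI_2} with the same upper obstacle $\psi + p$, each is admissible as a test function in the other's VI. Adding the two resulting inequalities, invoking $\partial_t(y_1 - y_2) \cdot (y_1 - y_2) = \tfrac{1}{2} \partial_t |y_1 - y_2|^2$, the trace embedding $H^1(\Omega) \hookrightarrow L^2(\partial\Omega)$, and Grönwall's lemma produces the bound. Since $\tilde U$ is dense in $U$ in $L^2((0,T) \times \partial\Omega)$ (e.g., via mollification in time with a cutoff near $t = 0$), \eqref{eq:ParLipschitz} uniquely extends $S$ by continuity to $\bar P \times U$ with values in $C([0,T];L^2(\Omega)) \cap L^2(0,T;H^1(\Omega))$. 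Because a.e.\ inequalities pass to $L^2$-limits along subsequences, it then suffices to verify (i)--(v) for $u \in \tilde U$.

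For (i), testing \eqref{eq:parQVI_2} with $v = y - \min(y, 0)$, which is admissible since $\psi + p \ge 0$, and using $u \ge 0$ yields $\tfrac{1}{2} \partial_t \norm{y^-}_{L^2(\Omega)}^2 + \norm{\nabla y^-}_{L^2(\Omega)}^2 \le 0$ for $y^- := -\min(y,0)$, so $y^- = 0$. For (ii) with $p_1 \le p_2$ and $u_1 \le u_2$, the test functions $v_1 = \min(y_1, y_2)$ and $v_2 = \max(y_1, y_2)$ are admissible in the VIs for $y_1$ and $y_2$, respectively (admissibility of $v_2$ requires $p_1 \le p_2$); summing the two VIs and exploiting $u_1 \le u_2$ produces the energy inequality $\tfrac{1}{2} \partial_t \norm{w(t)}_{L^2(\Omega)}^2 + \norm{\nabla w(t)}_{L^2(\Omega)}^2 \le 0$ for $w := (y_1 - y_2)^+$, which combined with $w(0) = 0$ gives $y_1 \le y_2$. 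Property (iii) is immediate from (i) and the constraint $y \le \psi + p \in L^\infty(\Omega)$.

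For concavity (iv), set $\tilde y := \lambda y_1 + (1-\lambda) y_2$, $w := (\tilde y - y_{12})^+$, and test with $v_i = y_i - w$ in the VIs for $y_i$ and with $v = y_{12} + w$ in the VI for $y_{12}$; admissibility of $v$ is verified by splitting into $\{w > 0\}$, where $v = \tilde y \le \psi + \lambda p_1 + (1-\lambda) p_2$ by convexity of the obstacle, and $\{w = 0\}$, where $v = y_{12}$. Forming $\lambda \cdot (\mathrm{VI}_1) + (1-\lambda) \cdot (\mathrm{VI}_2) - (\mathrm{VI}_{12})$ and repeating the energy estimate of (ii) yields $w \equiv 0$. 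For (v), put $M := \norm{p_1 - p_2}_{L^\infty(\Omega)}$ and $w := (y_1 - y_2 - M)^+$; the test $v_1 = y_1 - w$ is always admissible, and $v_2 = y_2 + w$ is admissible because on $\{w > 0\}$ it equals $y_1 - M \le \psi + p_1 - M \le \psi + p_2$ by $|p_1 - p_2| \le M$. The usual subtract-and-Grönwall step gives $w \equiv 0$, hence $y_1 - y_2 \le M$ a.e., and a symmetric argument completes (v). The principal technical obstacle is the construction of the strong solution in the claimed regularity class for $u \in \tilde U$, since the inhomogeneous Neumann data must be handled carefully both in the penalization limit and in the bound on $\partial_t y$; once this step is secured, every remaining item follows from variations of the same pattern of admissible test functions and energy estimates underlying \cref{lemma:obstacleproperties}.
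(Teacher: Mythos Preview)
Your proposal is correct and follows essentially the same approach as the paper: existence and the Lipschitz estimate are taken from classical parabolic VI theory (the paper cites \cite[Theorem~4.1]{Barbu1984}), and properties (i)--(v) are obtained by the same truncation-based test-function arguments as in \cref{lemma:obstacleproperties}, first for $u \in \tilde U$ and then by density. The only point you pass over a bit quickly is that extending (ii) and (iv) from $\tilde U$ to $U$ requires approximating ordered data $u_1 \le u_2$ by sequences in $\tilde U$ that preserve the order; the paper handles this by noting that $\tilde U$ is stable under pointwise truncation.
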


\begin{proof}
The existence of a unique strong solution $\smash{S(p, u) \in H^1(0, T;H^1(\Omega))} \cap \smash{W^{1, \infty}(0, T;L^2(\Omega))}$ 
of \eqref{eq:parQVI_2} for all $p \in \bar P$ and all $\smash{u \in \tilde U}$
and the estimate \eqref{eq:ParLipschitz} follow
from \cite[Theorem 4.1]{Barbu1984}
and the embeddings 
$C^{0,1}([0, T] \times \partial \Omega) \hookrightarrow H^1(0, T; L^2(\partial \Omega)) \hookrightarrow H^1(0, T;H^1(\Omega)^*)$
and $L^2((0, T) \times \partial \Omega) \cong L^2(0, T;L^2(\partial \Omega)) \hookrightarrow L^2(0, T;H^1(\Omega)^*)$. To 
see that $S$ admits a unique extension $\smash{S\colon \bar P \times U \to C([0, T];L^2(\Omega)) \cap L^2(0, T;H^1(\Omega))}$,
it suffices to note that $\smash{\tilde U}$ is dense in $U$ w.r.t.\ the norm $\smash{\|\cdot \|_{L^2((0, T) \times \partial \Omega)}}$
and to use \eqref{eq:ParLipschitz}, see \cite[Corollary 4.1]{Barbu1984}.
Suppose now that $\smash{u_1, u_2 \in \tilde U}$ and $\smash{p_1, p_2 \in \bar P}$ 
satisfying $u_1 \leq u_2$ and $p_1 \leq p_2$ are given
and define $y_1 := S(p_1, u_1)$ and $y_2 := S(p_2, u_2)$.
Then it follows from 
\cite[Lemma A.1]{ChristofVexler2020} and \cite{DWachmuth2015} 
that  $y_1 - \max(0, y_1 - y_2), y_2 + \max(0, y_1 - y_2) \in L^2(0, T;H^1(\Omega)) \cap H^1(0, T;L^2(\Omega))$ holds and 
we may use \eqref{eq:parQVI_2} and the formulas in \cite[Lemma A.1]{ChristofVexler2020} to obtain
(analogously to the elliptic case in \cref{lemma:obstacleproperties}\ref{obstacle:ii})
\begin{equation}
\label{eq:randomeq1782636}
\begin{aligned}
0 &\leq 
\int_0^\tau 
\int_\Omega \partial_t (y_2 - y_1)\max(0, y_1 - y_2)
+
\nabla (y_2 - y_1) \cdot \nabla \max(0, y_1 - y_2)
\mathrm{d}x
\\
&\qquad\qquad\qquad\qquad\qquad\qquad\qquad\quad
- \int_{\partial \Omega} 
(u_2 - u_1) \max(0, y_1 - y_2) \mathrm{d}s\,\mathrm{d}t
\\
&\leq - \frac{1}{2} \| \max(0, y_1(\tau) - y_2(\tau))\|_{L^2(\Omega)}^2 \qquad\qquad \forall \tau \in [0, T]. 
\end{aligned}
\end{equation}
The above shows that $S(p_1, u_1) \leq S(p_2, u_2)$ holds for all $u_1, u_2 \in \tilde U$ and $p_1, p_2 \in \bar P$ 
satisfying $u_1 \leq u_2$ and $p_1 \leq p_2$. 
By approximation in $L^2( (0, T) \times \partial \Omega)$,
this result readily carries over to all $u_1, u_2 \in U$ and $p_1, p_2 \in \bar P$ with 
$u_1 \leq u_2$ and $p_1 \leq p_2$, cf.\ \eqref{eq:ParLipschitz}.
(Note that the set $\tilde U$ is stable w.r.t.\ pointwise truncation and that, as a consequence, 
given $u_1, u_2 \in U$ with $u_1 \leq u_2$, it is easy to construct approximating sequences in $\tilde U$
whose elements satisfy the same inequality.)
This proves \ref{parobstacle:ii}. Since we again have $S(0, 0) = 0$ and since 
$S(p, u)(t) \leq \psi + p$ holds a.e.\ in $\Omega$ for a.a.\ $t \in (0, T)$, 
the assertions in \ref{parobstacle:i} and \ref{parobstacle:iii} follow immediately from \ref{parobstacle:ii}. 
To establish \ref{parobstacle:iv}, we can proceed completely analogously 
to the elliptic case in \cref{lemma:obstacleproperties}\ref{obstacle:iii}   
using the results in \cite[Lemma A.1]{ChristofVexler2020}, 
a calculation as in \eqref{eq:randomeq1782636}, and approximation. 
It remains to establish \ref{parobstacle:v}. To this end, 
let $u \in \tilde U$ and $p_1, p_2 \in L_+^\infty(\Omega)$ 
with associated $y_1 := S(p_1, u)$, $y_2 := S(p_2, u)$ be given
and define $z := \max(y_2 - y_1 - \|p_1 - p_2\|_{L^\infty(\Omega)}, 0)$. 
By choosing the test functions $y_1 + z$ 
and $y_2 - z$
in the variational inequalities for $y_1$ and $y_2$ and by adding, integrating, and again exploiting 
\cite[Lemma A.1]{ChristofVexler2020}, we obtain 
\[
0 \leq \int_0^\tau 
\int_\Omega \partial_t (y_1- y_2) z
+
\nabla (y_1 - y_2) \cdot \nabla z\,
\mathrm{d}x
\mathrm{d}t
=
- \int_0^\tau 
\int_\Omega (\partial_t z) z + \nabla z \cdot \nabla z\,
\mathrm{d}x
\mathrm{d}t
\]
for all $\tau \in [0, T]$. This implies $z = 0$ and,
after plugging in the definition of $z$ and switching the roles 
of $y_1$ and $y_2$,  $|y_1 - y_2| \leq  \|p_1 - p_2\|_{L^\infty(\Omega)}$ a.e.\ 
in $X$ as claimed in \ref{parobstacle:v}. 
To finally establish \ref{parobstacle:v}  for all $u \in U$, we can again use 
approximation. Note that the Lipschitz continuity estimate \eqref{eq:ParLipschitz} is sufficient for this purpose 
as the set $\{ v \in L^2(X) \mid 0 \leq v \leq \|p_1 - p_2\|_{L^\infty(\Omega)} \}$ 
is closed in $ L^2(X)$.
\end{proof}

\begin{lemma}[properties of $\Phi$]%
\label{lemma:parabolicPhiProbs}%
Let $X$ and $P$ be as before and set $\Phi(v) := w(T)$ with $w$ as the solution of the heat equation with right-hand side $g(v)$ in \eqref{eq:PhiParPDE}.
Then $\Phi$ is well-defined as a function  $\Phi\colon L^2(X) \to P$ and 
the following is true:
\begin{enumerate}
\item\label{parPhi:i}
For all $v_1, v_2 \in L^2(X)$, $v_1\leq v_2$, it holds $\Phi(v_1) \leq \Phi(v_2)$. 
\item\label{parPhi:ii}
If $g$ is concave on $[-\varepsilon, \infty)$, $\varepsilon \geq 0$,
then, for all $\lambda \in [0, 1]$ and $v_1, v_2 \in L^2(X)$ with $v_1  \geq - \varepsilon$, $v_2  \geq - \varepsilon$,
we have $\lambda \Phi(v_1) + (1 - \lambda)\Phi(v_2) \leq  \Phi(\lambda v_1 + (1 - \lambda) v_2)$. 
\item\label{parPhi:iii}
There exist an exponent $q \in [2, \infty)$ and a constant $C > 0$ such that, for all $v_1, v_2 \in L^q(X)$,
we have 
$\left \| \Phi(v_1) - \Phi(v_2)\right \|_{L^\infty(\Omega)} \leq C \|v_1 - v_2\|_{L^q(X)}$.
\end{enumerate}
\end{lemma}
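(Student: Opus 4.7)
The plan is to verify well-definedness and then establish \ref{parPhi:i}--\ref{parPhi:iii} via the parabolic maximum principle, the linearity of \eqref{eq:PhiParPDE}, and a smoothing estimate for the Dirichlet heat semigroup. For well-definedness, the boundedness and nonnegativity of $g$ yield $g(v) \in L^\infty_+(X)$ for every $v \in L^2(X)$; comparison of \eqref{eq:PhiParPDE} with the spatially constant supersolution $(t,x) \mapsto t\,\|g\|_{L^\infty(\R)}$ gives $0 \leq w \leq T \|g\|_{L^\infty(\R)}$ in $(0,T) \times \Omega$, and standard parabolic regularity places $w$ in $C([0,T];L^2(\Omega))$, so $w(T)$ makes sense as an element of $L^\infty_+(\Omega) = P$.

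For \ref{parPhi:i}, the monotonicity of $g$ gives $g(v_1) \leq g(v_2)$ a.e.\ in $X$ whenever $v_1 \leq v_2$, and the difference of the associated solutions of \eqref{eq:PhiParPDE} satisfies a heat equation with nonnegative right-hand side and zero data. The parabolic maximum principle thus yields $\Phi(v_1) \leq \Phi(v_2)$. For \ref{parPhi:ii}, I exploit the linearity of \eqref{eq:PhiParPDE}. Under the assumed concavity of $g$ on $[-\varepsilon,\infty)$, one has $g(\lambda v_1 + (1-\lambda) v_2) \geq \lambda g(v_1) + (1-\lambda) g(v_2)$ a.e.\ in $X$ whenever $v_1, v_2 \geq -\varepsilon$, and since $\lambda w_1 + (1-\lambda) w_2$ solves the heat equation with right-hand side $\lambda g(v_1) + (1-\lambda) g(v_2)$ and zero data, a further appeal to the maximum principle gives $w_\lambda \geq \lambda w_1 + (1-\lambda) w_2$, and evaluation at $t = T$ yields the concavity of $\Phi$.

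The main obstacle is \ref{parPhi:iii}, which requires a quantitative $L^q \to L^\infty$ estimate for the heat equation. My plan is to represent the difference $w := w_1 - w_2$ by Duhamel's formula in terms of the Dirichlet heat semigroup on $\Omega$ and then invoke the classical Gaussian smoothing bound that controls its $L^q(\Omega) \to L^\infty(\Omega)$ operator norm by $C\,t^{-d/(2q)}$; such an estimate is available on bounded Lipschitz domains. Combining this with H\"older's inequality in time produces $\|w(T)\|_{L^\infty(\Omega)} \leq C_T \, \|g(v_1) - g(v_2)\|_{L^q(X)}$ whenever $d q'/(2q) < 1$, i.e., $q > 1 + d/2$, where $q'$ denotes the H\"older conjugate of $q$. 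Choosing any $q \geq 2$ with $q > 1 + d/2$ and using the global Lipschitz continuity of $g$ to replace $g(v_1) - g(v_2)$ by $v_1 - v_2$ then yields the desired estimate. The principal subtlety lies in the appeal to the Gaussian smoothing bound on a Lipschitz (as opposed to smooth) domain; I would invoke this as a classical fact from the parabolic literature rather than reprove it.
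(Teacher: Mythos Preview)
Your proposal is correct. For well-definedness and for \ref{parPhi:i}--\ref{parPhi:ii} you follow essentially the same line as the paper: boundedness and nonnegativity of $g$, the monotonicity and linearity of the heat-equation solution operator, and the parabolic maximum principle. (The paper phrases the $L^\infty$-bound for $w$ via a cited regularity result rather than by comparison with the supersolution $t\|g\|_{L^\infty(\R)}$, but this is a cosmetic difference.)

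The only genuine divergence is in \ref{parPhi:iii}. The paper invokes a single black-box result, namely the $L^q \to C([0,T]\times\overline\Omega)$ estimate for the Dirichlet heat equation on bounded Lipschitz domains from \cite[Theorem~3.1]{Rehberg2015}, and then uses the Lipschitz continuity of $g$ exactly as you do. Your route via Duhamel and the Gaussian $L^q(\Omega)\to L^\infty(\Omega)$ smoothing bound $\|e^{t\Delta}\|_{L^q\to L^\infty}\le C t^{-d/(2q)}$, followed by H\"older in time, arrives at the same conclusion with the explicit threshold $q>1+d/2$. Your argument is more transparent about where the exponent comes from, whereas the paper's is shorter and delegates the Lipschitz-domain technicalities to a single citation; either way the appeal to a nontrivial fact about parabolic regularity on Lipschitz domains is unavoidable, and you have identified this correctly.
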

\begin{proof}
Due to our assumptions on the function $g$, 
it holds $g(v) \in L_+^\infty((0, T) \times \Omega)$
for all \mbox{$v \in L^2(X)$} and, since $\Omega$
is a bounded Lipschitz domain, we obtain from \cite[Theorem~3.1]{Rehberg2015}
that there exist a $q \in [2, \infty)$ and a constant $C > 0$ such that 
the solution $w$ of \eqref{eq:PhiParPDE} with right-hand side $g(v)$ is in $C([0, T] \times \overline\Omega)$ and satisfies
\[
\|\Phi(v)\|_{L^\infty(\Omega)} 
= \|w(T)\|_{C(\overline\Omega)} \leq \|w\|_{C([0, T] \times \overline\Omega)} \leq C \|g(v)\|_{L^q(X)}
\quad \forall v \in L^2(X). 
\]
Using standard results (or again \cite[Lemma A.1]{ChristofVexler2020}), it is further 
easy to check that the solution of a heat equation with homogeneous Dirichlet boundary conditions,
the initial condition zero, and a nonnegative right-hand side is nonnegative in $[0, T] \times \overline\Omega$. 
This proves that 
$\Phi$ is indeed well-defined as a map from $L^2(X)$ into $P = L^\infty_+(\Omega)$. 
From the monotonicity and linearity of the solution operator of the heat equation
and our assumptions on $g$, one also easily obtains \ref{parPhi:i} and \ref{parPhi:ii}. 
To finally establish \ref{parPhi:iii}, we note that the global Lipschitz continuity of $g$ and again \cite[Theorem 3.1]{Rehberg2015}
 yield
\[
\|\Phi(v_1) - \Phi(v_2)\|_{L^\infty(\Omega)} \leq C \|g(v_1) - g(v_2)\|_{L^q(X)}
\leq C \|g\|_{C^{0,1}(\R)} \|v_1 - v_2\|_{L^q(X)}
\]
for all $v_1, v_2 \in L^q(X)$ with the same $C$ and $q$ as before. 
This completes the proof. 
\end{proof}

Using the results of \cref{sec:2,sec:3,sec:4,sec:5}, we now get the following for the QVI \eqref{eq:parQVI}.

\begin{theorem}[directional differentiability and Lipschitz stability for \eqref{eq:parQVI}]%
\label{the:parmain}%
The parabolic QVI \eqref{eq:parQVI} possesses 
a nonempty set of solutions
$\mathbb{S}(u) \subset C([0, T];L^2(\Omega)) \cap L^2(0, T;H^1(\Omega)) \cap L_+^\infty((0, T)\times \Omega)$
for all $u \in L_+^\infty((0, T) \times \partial \Omega) $.
This solution set 
possesses unique smallest and largest elements $m(u)$ and $M(u)$.
Further, the following is true for the maps $m,M\colon L_+^\infty((0, T) \times \partial \Omega) \to L^\infty((0, T)\times \Omega)$:
\begin{enumerate}
\item The function $M$ is concave, i.e., 
for all $u_1, u_2 \in L_+^\infty((0, T) \times \partial \Omega)$, $\lambda \in [0, 1]$, we have
$\lambda M(u_1) + (1 - \lambda)M(u_2) \leq M(\lambda u_1 + (1-\lambda)u_2)$. 

\item\label{par:item:ii} 
The functions $m$ and $M$ are locally Lipschitz continuous 
in the sense that, for all $u \in  L_\oplus^\infty((0, T) \times \partial \Omega)$,
there exist constants $C, r> 0$ with
\begin{equation}
\label{eq:parobloclip}
\begin{aligned}
\|m(v_1) - m(v_2)\|_{L^\infty((0, T)\times \Omega)} 
&+
\|M(v_1) - M(v_2)\|_{L^\infty((0, T)\times \Omega)} 
\\
&\leq C \|v_1 - v_2\|_{L^\infty((0, T)\times \partial \Omega)}
\end{aligned}
\end{equation}
for all $v_1, v_2 \in  L_+^\infty((0, T)\times \partial \Omega)$ satisfying $\|u - v_i\|_{L^\infty((0, T)\times \partial \Omega)} \leq r$,
$i=1,2$.

\item\label{par:item:iii} 
The functions $m$ and $M$ are weakly continuous 
in the sense that, for all $u, u_n \in  L_\oplus^\infty((0, T) \times \partial \Omega)$
satisfying $u_n \to u$ in $L^\infty((0, T) \times \partial \Omega)$, 
we have $m(u_n) \weakly m(u)$ and $M(u_n) \weakly M(u)$ in $L^2(0, T; H^1(\Omega))$.

\item\label{par:item:iv} 
The function $M$ is Hadamard 
directionally differentiable on $L_\oplus^\infty((0, T) \times \partial \Omega)$ in the sense that,
for all $u \in L_\oplus^\infty((0, T) \times \partial \Omega)$ and $h \in L^\infty((0, T) \times \partial \Omega) $, 
there exists a unique $M'(u; h) \in L^\infty((0, T)\times \Omega)$ such that,
for all $\{\tau_n\} \subset (0, \infty)$ and $\{h_n\} \subset L^\infty((0, T) \times \partial \Omega) $ 
satisfying $\tau_n \to 0$, $\|h - h_n\|_{L^\infty((0, T) \times \partial \Omega)} \to 0$, 
and $u + \tau_n h_n \in L_+^\infty((0, T) \times \partial \Omega)$ for all $n$, it holds 
\[
\frac{M(u + \tau_n h_n) - M(u)}{\tau_n} \to M'(u; h) \text{ in } L^q((0, T) \times \Omega) \text{ for all } 1 \leq q < \infty
\]
and 
\[
\frac{M(u + \tau_n h_n) - M(u)}{\tau_n} \weaklystar M'(u; h) \text{ in } L^\infty((0, T) \times \Omega ).
\]

\item\label{par:item:v} 
If the function $g$ is not only concave on $[0, \infty)$ but even on an interval of the form 
$[-\varepsilon, \infty)$, $\varepsilon > 0$, then the QVI \eqref{eq:parQVI} possesses 
a unique solution $\mathbb{S}(u) = m(u) = M(u)$ for all $u \in L_+^\infty((0, T) \times \partial \Omega)$. 
In this case, the derivatives $\mathbb{S}'(u; h) = M'(u; h)$ in \ref{par:item:iv} are uniquely
characterized by the condition that they are the smallest elements in $L^\infty((0, T)\times \Omega)$ of the sets 
\[
\left \{\zeta \in L^\infty((0, T)\times \Omega) \mid \zeta 
= \Psi'((\mathbb{S}(u),u);(\zeta, h))~\text{a.e.\ in }(0, T) \times \Omega\right \}.
\]
Here, $\Psi$ 
denotes the composition $\Psi(v, u) := S(\Phi(v), u)$ of the functions $S$ and $\Phi$
in \cref{lemma:parabolicobstacleprob,lemma:parabolicPhiProbs}, cf.\ \cref{lem:concdirdifPsi}.
\end{enumerate}
\end{theorem}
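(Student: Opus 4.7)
The overall strategy is to recognize that, with the objects $(X, \Sigma, \mu)$, $(Y, \Xi, \eta)$, $\bar P$, $P$, $U$, $S$, and $\Phi$ defined before \cref{lemma:parabolicobstacleprob,lemma:parabolicPhiProbs}, the parabolic QVI \eqref{eq:parQVI} is equivalent to the abstract fixed-point equation $y = S(\Phi(y), u)$, and then to invoke the general theorems of \cref{sec:2,sec:3,sec:4,sec:5}. The equivalence is immediate: if $y = S(\Phi(y), u)$ and $y \in L^2(X)$, then by \cref{lemma:parabolicobstacleprob} one has $y \in C([0, T]; L^2(\Omega)) \cap L^2(0, T; H^1(\Omega))$ and $y$ satisfies \eqref{eq:parQVI_2} with $p = \Phi(y) \in L^\infty_+(\Omega)$, which is precisely \eqref{eq:parQVI}; conversely, any solution of \eqref{eq:parQVI} is by definition a solution of $y = S(\Phi(y), u)$.

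The next step is to verify that \cref{lemma:parabolicobstacleprob,lemma:parabolicPhiProbs} supply all of the standing assumptions of \cref{subsec:2.1} and the additional \cref{ass:Lipschitz,ass:DirDiff,ass:AuxProb}. Monotonicity and concavity follow directly from \cref{lemma:parabolicobstacleprob}\ref{parobstacle:ii}, \ref{parobstacle:iv} and \cref{lemma:parabolicPhiProbs}\ref{parPhi:i}, \ref{parPhi:ii}. The positive superhomogeneity conditions required by \cref{ass:Lipschitz} are obtained by exploiting that $S(0, 0) = 0$ and $\Phi(0) \geq 0$: concavity gives $\lambda S(p, u) + (1 - \lambda) S(0, 0) \leq S(\lambda p, \lambda u)$ and $\lambda \Phi(v) + (1 - \lambda) \Phi(0) \leq \Phi(\lambda v)$, and the nonnegativity of the evaluations at zero allows the second summands to be dropped. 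The $L^\infty$-regularity of $S$ required by \cref{ass:AuxProb}\ref{ass:AuxProb:iii} is \cref{lemma:parabolicobstacleprob}\ref{parobstacle:iii}, and the strengthened concavity of $\Phi$ on $\{v \geq -\varepsilon\}$ used in part \ref{par:item:v} is precisely \cref{lemma:parabolicPhiProbs}\ref{parPhi:ii}.

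Once these verifications are in place, the first four items of the theorem follow from direct appeals to the abstract machinery: \cref{th:solvability} provides the solution set structure with $m(u)$ and $M(u)$, \cref{prop:Mconcave} yields the concavity of $M$, \cref{thm:solutions_lipschitz} together with \eqref{eq:randomeq273545} delivers the local Lipschitz estimate \eqref{eq:parobloclip} in $L^\infty((0, T) \times \Omega)$ (the choice $q = \infty$ being admissible by \cref{lemma:parabolicobstacleprob}\ref{parobstacle:iii}), and \cref{cor:Hadamard} with $r = 2$ and $s = \infty$ gives the Hadamard differentiability in \ref{par:item:iv}. The weak convergence statement in \ref{par:item:iii} follows from \cref{rem:LipschitzComments}\ref{rem:LipschitzComments:iv} with the reflexive space $V := L^2(0, T; H^1(\Omega))$; the required uniform bound on $S(p, v)$ in $V$ is a standard energy estimate derived by testing \eqref{eq:parQVI_2} with $v = 0 \leq \psi + p$, combined with a trace inequality and Gr\"onwall's lemma.

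For part \ref{par:item:v}, the strengthened concavity of $g$ activates \cref{prop:soluniqueness} (yielding $m \equiv M \equiv \mathbb{S}$) and \cref{th:auxQVI}, which characterizes the directional derivative as the smallest element of the set $\AA$. To replace $\AA$ by the set $\BB$ in the theorem statement, the pointwise semicontinuity condition \eqref{eq:pointwise_lsc} on $\Psi(v, u) = S(\Phi(v), u)$ must be checked; this is the main technical obstacle. The key observation is that if $z_n \to z$ in $L^q(X)$ for the exponent $q$ from \cref{lemma:parabolicPhiProbs}\ref{parPhi:iii}, then $\Phi(z_n) \to \Phi(z)$ in $L^\infty(\Omega)$ by the Lipschitz estimate there, and consequently $\Psi(z_n, w) = S(\Phi(z_n), w) \to S(\Phi(z), w) = \Psi(z, w)$ uniformly in $X$ by the contraction property \cref{lemma:parabolicobstacleprob}\ref{parobstacle:v}. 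This uniform convergence is much stronger than \eqref{eq:pointwise_lsc}, so \cref{th:auxQVI} applies and delivers the characterization in \ref{par:item:v}.
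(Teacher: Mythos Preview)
Your proof is correct and follows essentially the same route as the paper: verify that \cref{lemma:parabolicobstacleprob,lemma:parabolicPhiProbs} supply all the abstract hypotheses, then invoke \cref{th:solvability,prop:Mconcave,thm:solutions_lipschitz,cor:Hadamard,prop:soluniqueness,th:auxQVI}. Two small remarks. First, in item \ref{par:item:iv} you invoke \cref{cor:Hadamard} with $r = 2$, which only gives $L^q$-convergence for $q \in [2,\infty)$; since $(0,T)\times\Omega$ has finite measure and $S$ maps into $L^\infty$, you should take $r = 1$ (or add one line using H\"older to pass from $L^2$ to $L^q$ for $q < 2$). Second, for item \ref{par:item:iii} the paper obtains the uniform $L^2(0,T;H^1(\Omega))$-bound more directly from \eqref{eq:ParLipschitz} together with $S(p,0) = 0$, avoiding your separate energy estimate; your argument via testing with $v=0$ works for strong solutions and extends by density, so it is fine, just slightly longer.
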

\begin{proof}
\Cref{lemma:parabolicobstacleprob,lemma:parabolicPhiProbs} yield that \eqref{eq:parQVI}
(or, more precisely, its  reformulation $y = S(\Phi(y), u)$)
satisfies all of the conditions in \cref{subsec:2.1} and 
\cref{ass:Lipschitz,ass:DirDiff}
(with 
$X$, 
$\bar P$, $P$, 
$U$,
$Y$, $S$, and $\Phi$ as before). 
They further show that there exists a $q \in [2, \infty)$ such that
the function ${L^q(X) \ni v \mapsto \Psi(v, u) := S(\Phi(v), u) \in  L^\infty_+(X)}$
is continuous for every arbitrary but fixed $u \in U$
and that \cref{ass:AuxProb} holds
when $g$ is concave on $[-\varepsilon, \infty)$ for some $\varepsilon > 0$.
By combining all of this with 
\cref{th:solvability,prop:Mconcave,thm:solutions_lipschitz,cor:Hadamard,prop:soluniqueness,th:auxQVI},
the assertions of the theorem follow immediately. 
Note that, to obtain \eqref{eq:parobloclip} and the weak continuity in \ref{par:item:iii}, one can use \eqref{eq:ParLipschitz} 
and the arguments outlined in points \ref{rem:LipschitzComments:ii} and 
\ref{rem:LipschitzComments:iv} of  \cref{rem:LipschitzComments}.
\end{proof}

Coupled parabolic systems of the type \eqref{eq:parQVI}
arise, for instance, in the context of thermoforming, 
see \cite{Alphonse2019-1,Alphonse2019-2,Aubin1979,Prigozhin1996-2,Prigozhin1996-1}.
Note that \cref{the:parmain} again does not require any assumptions 
on the sign of the directions $h$ or
on the operator norms of $\Phi$ and its derivatives,
cf.\ \cite{Alphonse2020-2}. At least to the authors' best knowledge,
this is the first differentiability result for parabolic QVIs in such a general setting.
The same seems to be the case for the characterization of the derivatives $\mathbb{S}'(u; h)$ in 
\cref{the:parmain}\ref{par:item:v}.


\bibliographystyle{siamplain}
\bibliography{references}

\end{document}